\newcommand{\R}{\mathbb{R}}
\newcommand{\N}{\mathbb{N}}
\newcommand{\C}{\mathbb{C}}
\newcommand{\B}{\mathbb{B}}
\newcommand{\1}{\bf{1}}
\newcommand{\vol}{\rm vol}
\newcommand{\Sph}{\mathbb{S}}
\newcommand{\dd}{\mathop{}\!\mathrm{d}}
\newcommand{\be}{\begin{equation}}%
\newcommand{\ee}{\end{equation}}
\DeclareMathOperator{\G}{G} 
\newtheorem{theorem}{Theorem}[section]
\newtheorem{corollary}[theorem]{Corollary}
\newtheorem{definition}[theorem]{Definition}
\newtheorem{lemma}[theorem]{Lemma}
\newtheorem{proposition}[theorem]{Proposition}
\newtheorem{remark}[theorem]{Remark}
\begin{document}
\title{Conformal Dirac-Einstein equations on manifolds with boundary.}
\author{William Borrelli$^{(1)}$, Ali Maalaoui$^{(2)}$ \& Vittorio Martino$^{(3)}$}
\addtocounter{footnote}{1}
\footnotetext{Dipartimento di Matematica e Fisica, Universit\`a Cattolica del Sacro Cuore, Via della Garzetta 48, I-25133, Brescia, Italy. E-mail address:
{\tt{william.borrelli@unicatt.it}}}
\addtocounter{footnote}{1}
\footnotetext{Department of Mathematics, Clark University, 950 Main Street, Worcester, MA 01610, USA. E-mail address:
{\tt{amaalaoui@clarku.edu}}}
\addtocounter{footnote}{1}
\footnotetext{Dipartimento di Matematica, Universit\`a di Bologna, Piazza di Porta S.Donato 5, 40126 Bologna,
Italy. E-mail address:
{\tt{vittorio.martino3@unibo.it}}}

\date{}
\maketitle

\vspace{5mm}

{\noindent\bf Abstract} { In this paper we study Dirac-Einstein equations on manifolds with boundary, restricted to a conformal class with constant boundary volume, under chiral bag boundary conditions for the Dirac operator. We characterize the bubbling phenomenon, also classifying ground state bubbles. Finally, we prove an Aubin-type inequality and a related existence result.}

\vspace{5mm}

\noindent
{\small Keywords: Einstein-Dirac equations, chiral bag boundary conditions, critical exponent, Aubin-type inequality. }
\vspace{5mm}

\noindent
{\small 2010 MSC. Primary: 53C21, 53C23.  Secondary: 53C27, 58E30}

\vspace{5mm}


\section{Introduction and main results}
The Einstein-Dirac equations describe the interaction of spin $\frac{1}{2}$ particles with a gravitational field. This interaction is modeled through a coupling of the Einstein-Hilbert functional (the total curvature) and a fermionic interaction using the Dirac operator. Given a three dimensional compact spin manifold $M$, the functional reads as
$$\mathcal{ED}(g,\psi)=\int_{M}R_{g}\ dv_{g}+\int_{M} \langle D_{g}\psi, \psi\rangle -|\psi|_{g}^{2}\ dv_{g},$$
where $g$ is a Riemannian metric on $M$ and $\psi$ is a spinor. We recall that the study of the first term of this functional was heavily investigated and this led to beautiful results in geometric analysis such as the solution of the Yamabe problem, when restricted to a conformal class of the metric, and the positive mass theorem \cite{SY, Wit}.\\
Going back to the Einstein-Hilbert functional, the natural extension was then to consider manifolds with non-empty boundaries. The functional is then modified by adding a term involving the total mean curvature of the boundary and this is known as the Gibbons-Hawking-York action (see, e.g. \cite{GH, Y}). Again, if one restricts the variations to a conformal class of the metric, one is led to the Yamabe problem on manifolds with boundary, first investigated by Escobar \cite{escobar}. This led to the study of the positive mass theorem on manifolds with boundary \cite{ABD, Hir} and Dirac type operators on such manifolds \cite{BC}. Notice that this brings extra challenges to decide which boundary condition one should impose on spinors that would guarantee ellipticity and allows achieving the same geometric properties. We refer to \cite{HMR} and the references therein for the study of several boundary conditions for the Dirac operator and their ellipticity.

In this paper, we propose to study the conformal version of the coupling of the Gibbons-Hawking-York functional with the fermionic interaction while imposing a natural local boundary condition. Indeed, let $(M,g)$ be a compact oriented three-dimensional Riemannian manifold with boundary. It is well-known that it admits a spin structure $\sigma$, see for instance \cite[page 87]{LawsonMichelsohn}. We assume that the manifold $M$ admits a \emph{chirality operator} $\G$ on the spinor bundle $\Sigma M$. Examples of such 3-dimensional manifolds are given by space-like hypersurfaces in a Lorentzian manifold, where $\G$ is given by the Clifford multiplication  by a unit time-like normal field to $M$, see e.g. \cite{HMR} and references therein. The existence of a chirality operator allows to define a class of (local) boundary conditions for the Dirac operator that behave nicely under a conformal change of metric (see Section \ref{sec:prop}).

Let us consider the energy functional $E^{b}$
\be\label{eq:energy}
E^{b}(u,\psi)=\frac{1}{2}\Big(\int_{M} \vert\nabla u\vert^2+\frac{R_g}{8}u^{2}\ dv_g+\int_{\partial M}\frac{1}{2}h_{g}u^{2}\ d\sigma_{g}+\int_{M}\langle D_g\psi,\psi\rangle - |\psi|^{2}|u|^{2}\ dv_g\Big)-\frac{b}{4}\int_{\partial M}u^{4}\ d\sigma_{g}\,,
\ee
where  $h_g$ is the mean curvature of $\partial M$, $b$ is a non-negative constant, $D_g$, $\Sigma M$ are the Dirac operator and the spinor bundle associated with $\sigma$, respectively, and $\langle\cdot,\cdot\rangle$ is the real part of the compatible Hermitian metric on $\Sigma M$. Notice that $E^{b}$ is defined on the space $\mathcal{H}=H^{1}(M)\times H^{\frac{1}{2}}_{+}(\Sigma M)$ where  $H^{\frac{1}{2}}_{+}(\Sigma M)$ is the space of $H^{\frac{1}{2}}$-sections of the spinor bundle satisfying chiral bag boundary conditions (see Section \ref{sec:prop}).
The energy functional $E^{b}$ arises from the following generalization of the Gibbons-Hawking-York functional, which in its turn generalizes the Hilbert-Einstein functional for manifolds with boundary
\be\label{eq:DEgeneral}
\mathcal{E}(g,\psi)=\int_{M}R_{g}dv_{g}+\frac{1}{2}\int_{\partial M}h_{g}\ d\sigma_{g}+\int_{M}\langle D_{g}\psi,\psi \rangle-\langle \psi,\psi\rangle\, dv_g\ ,
\ee
where $g$ is a Riemannian metric on $M$ and $\psi$ is a spinor field. As mentioned above, this model was investigated (for manifolds without boundary) in full generality by \cite{Belg, Fin, Kim} for the properties of the critical points of the functional and examples of Dirac-Einstein structures, while in \cite{MV0} the authors study the compactness property of the set Einstein-Dirac structures. The functional $E^{b}$ is obtained from $\mathcal E$ by restricting the latter to a conformal class of the metric with constant boundary volume ($Vol_{g}(\partial M)=1$). The conformal version of the functional $\mathcal{E}$ has been studied by the second and third named authors in \cite{MV3} in the case of closed oriented Riemannian 3-manifolds. Observe that the functional \eqref{eq:energy} can be considered as the three-dimensional analogue of the {\em Super-Liouville functional}, for which a blow-up analysis and the existence of critical points has been studied in the literature, see e.g. \cite{JWZ,JMW1,JMW2} (for the case $\partial M=\emptyset$) and \cite{FK, J1,J2,J3} (for the case $\partial M \not= 0$).

\medskip

The aim of the present paper is to study various properties of $E^{b}$ and of its critical points, considering a local boundary conditions for the Dirac operator $D_g$.

\medskip

The critical points of $E^{b}$ solve the following system:
\begin{equation}\label{eq1}
\left\{\begin{array}{ll}
L_{g}u=u|\psi|^{2}\\
&\qquad\mbox{on $M$}\\
D\psi=u^{2}\psi \\
&\\
B_{g}u:=\frac{\partial u}{\partial \nu}+\frac{1}{2}h_{g}u=bu^{3}\\
& \mbox{on $\partial M$}\\
\mathbb{B}^{+}\psi=0 \\
\end{array}
\right.
\end{equation}
Here $\mathbb{B}^{+}$ is the operator defining chiral boundary conditions for spinors (see Section \ref{sec:prop}).

We define the Yamabe invariant for manifolds with boundary $Y(M,\partial M,[g])$ by
\be\label{eq:yamabeinv}
Y(M,\partial M, [g])=\inf_{u\in H^{1}(M),u>0} \frac{\int_{M}|\nabla u|^{2}+\frac{1}{8}R_gu^{2}\ dv_{g}+\int_{\partial M}\frac{1}{2}h_{g}u^{2} \ d\sigma_{g}}{\Big(\int_{M}u^{6} \ dv_{g}\Big)^{\frac{1}{3}}}\,,
\ee
where $R_g$ is the scalar curvature of the metric $g$.

It is also important to notice that the operator $(L_{g},B_{g})$ is elliptic with compact resolvent. So under the boundary condition $B_{g}u=0$, $L_{g}$ is self-adjoint and has a discrete spectrum. The first eigenvalue of this operator is defined by
$$\lambda_{1}=\inf_{u\in H^{1}(M)\setminus\{0\}}\frac{\int_{M}|\nabla u|^{2}+\frac{1}{8}R_gu^{2}\ dv_{g}+\frac{1}{2}\int_{\partial M}h_gu^{2} \ d\sigma_{g}}{\int_{M}u^{2} \ dv_{g}}.$$
The sign of $\lambda_{1}$ is a conformal invariant and it has the same sign as $Y(M,\partial M,[g])$. From now on, we will focus on the case $\lambda_{1}>0$ and by using the first eigenfunction as a conformal change, we can assume without loss of generality that $h_g=0$ and $R_g>0$, as it was shown in Proposition 1.3 and Lemma 1.1 in \cite{escobar}. The quantity $\|u\|^{2}=\int_{M} |\nabla u|^{2}+\frac{1}{8}R_g u^{2}\ dv_{g}$, defines then a norm on $H^{1}(M)$, equivalent to the standard one.
Our new functional reads
\be\label{eq:newenergy}
E^{b}(u,\psi)=\frac{1}{2}\Big(\int_{M}|\nabla u|^{2}+\frac{1}{8}R_gu^{2}\ dv_{g}+\int_{M}\langle D\psi,\psi\rangle - |\psi|^{2}|u|^{2}\ dv_{g}\Big)-\frac{b}{4}\int_{\partial M}u^{4}\ d\sigma_{g} \,.
\ee

Looking for solutions to \eqref{eq1} via variational methods, bubbling phenomena may occur, due to the conformal invariance of the functional, and one is led to study the limit equations describing the blow-up profiles. We will distinguish two kinds of blow-up:
\begin{itemize}
\item Interior blow-up, when the energy concentration occurs in the interior of the manifold. This phenomena is similar to the case of manifolds with no boundary that was studied in \cite{MV3} and hence the limit equations are posed on the whole space $\R^{3}$  and read
\be\label{eq:blowupint}
\left\{\begin{array}{ll}
-\Delta u=u|\psi|^{2}\\
& \qquad \mbox{on $\R^3$.}\\
D\psi=u^{2}\psi \\
\end{array}
\right.
\ee

\item Boundary blow-up, when the energy concentration occurs at the boundary $\partial M$. the limit equations are posed on the half-space $\R^3_+$ and are given by
\be\label{eq:blowup}
\left\{\begin{array}{ll}
-\Delta u=u|\psi|^{2}\\
& \qquad \mbox{on $\R^3_+$}\\
D\psi=u^{2}\psi\\
\frac{\partial u}{\partial \nu}=bu^3\\
&\qquad \mbox{on $\partial\R^3_+$}\\
\mathbb{B}^{+}\psi=0
\end{array}
\right.
\ee

\end{itemize}

We recall that solutions of $(\ref{eq:blowupint})$ have been studied by the first two named authors in \cite{BM} where a classification result was exhibited for ground state solutions. We also distinguish the two limiting functionals. Namely, weak solutions for $(\ref{eq:blowupint})$ are critical points of the energy functional
\be\label{eq:bubbleenergy}
E_{\R^{3}}(u,\psi)=\frac{1}{2}\left( \int_{\R^3}\vert\nabla u\vert^2+\langle D\psi,\psi\rangle - u^2\vert \psi\vert^2\, dv_{g_{\R^{3}}}\right)\,,
\ee
on $\mathring{H}^1(\R^3)\times\mathring{H}^{1/2}(\Sigma_{g_{0}}\R^3)$.
On the other hand, weak solutions $(u,\psi)\in \mathring{H}^1(\R^3_+)\times \mathring{H}^{1/2}_{+}(\Sigma_{g_0}\R^3_+)$ to \eqref{eq:blowup} correspond to critical points of the following functional
\be\label{eq:bubbleaction}
E_{\R^3_+}^{b}(u,\psi)=\frac{1}{2}\left( \int_{\R^3_+}\vert\nabla u\vert^2+\langle D\psi,\psi\rangle - u^2\vert \psi\vert^2\, dv_{g_{\R^{3}}}\right)-\frac{b}{4}\int_{\partial \R^{3}_{+}}u^{4} d\sigma_{g_{\R^{3}}} \,.
\ee

In our investigation, we start by studying the asymptotic decomposition and the energy quantization of Palais-Smale sequences of the functional $E^{b}$, as described in the following result.

\begin{theorem}\label{first}
Let us assume that $M$ has a positive Yamabe constant $Y_g(M,\partial M)$ and let $(u_{n},\psi_{n})$ be a Palais-Smale sequence for $E$ at level $c$. Then there exist $u_{\infty}\in C^{\infty}(M)$, $\psi_{\infty}\in C^{\infty}(\Sigma M)$ such that $(u_{\infty},\psi_{\infty})$ is a solution of $(\ref{eq1})$, $m+\ell$ sequences of points $\tilde{x}_{n}^{1},\cdots, \tilde{x}_{n}^{m} \in M$ and $x_{n}^{1},\cdots, x_{n}^{\ell}$, such that $\lim_{n\to \infty}\tilde{x}_{n}^{k}= \tilde{x}^{k}\in \overset{\circ}{M}$, for $k=1,\dots,m$. $\lim_{n\to \infty}x_{n}^{k}=x^{k}\in \partial M$ for $k=1,\cdots, \ell$ and $m+\ell$ sequences of real numbers $\tilde{R}_{n}^{1},\cdots, \tilde{R}_{n}^{m}, R_{n}^{1},\cdots, R_{n}^{\ell}$ converging to zero, such that:
\begin{itemize}
\item[i)]   $\displaystyle u_{n}=u_{\infty}+\sum_{k=1}^{m} \tilde{v}_{n}^{k}+\sum_{k=1}^{\ell} v_{n}^{k}+o(1)$  in  $H^{1}(M)$,
\item[ii)]  $\displaystyle \psi_{n}=\psi_{\infty}+\sum_{k=1}^{m}\tilde{\phi}_{n}^{k}+\sum_{k=1}^{\ell} \phi_{n}^{k}+o(1)$ in $H^{\frac{1}{2}}_{+}(\Sigma M)$,
\item[iii)] $\displaystyle E^{b}(u_{n},\psi_{n})=E^{b}(u_{\infty},\psi_{\infty})+\sum_{k=1}^{m}E_{\mathbb{R}^{3}}(\tilde{U}_\infty^{k},\tilde{\Psi}_\infty^{k})+\sum_{k=1}^{\ell}E^{b}_{\mathbb{R}^{3}_{+}}(U_{\infty}^{k},\Psi_{\infty}^{k})+o(1)$,
\end{itemize}
where
$$\tilde{v}_{n}^{k}=(\tilde{R}_{n}^{k})^{-\frac{1}{2}}\tilde{\beta}_{k}\tilde{\sigma}_{n,k}^{*}(\tilde{U}_\infty^{k}) ,$$
$$v_{n}^{k}=(R_{n}^{k})^{-\frac{1}{2}}\beta_{k} \sigma_{n,k}^{*}(U_\infty^{k}) ,$$
$$\tilde{\phi}_{n}^{k}=(\tilde{R}_{n}^{k})^{-1}\tilde{\beta}_{k}\tilde{\sigma}_{n,k}^{*}(\tilde{\Psi}_\infty^{k}) ,$$
$$\phi_{n}^{k}=(R_{n}^{k})^{-1}\beta_{k}\sigma_{n,k}^{*}(\Psi_\infty^{k}) ,$$

with $\tilde{\sigma}_{n,k}=(\tilde{\rho}_{n,k})^{-1}$ and $\tilde{\rho}_{n,k}(\cdot)=exp_{\tilde{x}_{n}^k}(\tilde{R}_{n}^k \cdot)$ is the exponential map defined in a suitable neighborhood of $\R^{3}$, $\sigma_{n,k}=\rho_{n,k}^{-1}$ and $\rho_{n,k}=\mathcal{F}_{x^{k}}(R_{n}^{k} \cdot)$ is the Fermi coordinate patch in a suitable neighborhood of $\R^{3}_{+}$.
Also, here $\tilde{\beta}_{k}$ (resp. $\beta_{k}$) is a smooth compactly supported function, such that $\tilde{\beta}_{k}=1$ on $B_{1}(\tilde{x}^{k})$ (resp. $\beta_{k}=1$ on $B_{1}(x^{k})$) and $supp(\tilde{\beta}_{k})\subset B_{2}(\tilde{x}^{k})$ (resp. $supp(\beta_{k})\subset B_{2}(x^{k})$) and $(\tilde{U}_\infty^{k},\tilde{\Psi}_\infty^{k})$ are solutions to the system $(\ref{eq:blowupint})$ on $\mathbb{R}^{3}$ with its Euclidean metric $g_{\R^3}$ while $(U_{\infty}^{k},\Psi_{\infty}^{k})$ are solutions to the system $(\ref{eq:blowup})$ on $\mathbb{R}^{3}_{+}$.
\end{theorem}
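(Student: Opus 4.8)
The plan is to adapt the now-standard Struwe-type profile decomposition to this coupled spinorial setting with boundary, following the blueprint of \cite{MV3} for the closed case but carefully distinguishing interior and boundary concentration. First I would establish the basic compactness input: since $Y(M,\partial M,[g])>0$ we may work with the equivalent norm $\|u\|^2=\int_M|\nabla u|^2+\tfrac18 R_g u^2\,dv_g$, and a Palais--Smale sequence $(u_n,\psi_n)$ at level $c$ is bounded in $\mathcal H=H^1(M)\times H^{1/2}_+(\Sigma M)$. Boundedness comes from testing $dE^b(u_n,\psi_n)$ against $(u_n,\psi_n)$ and against $(0,\psi_n)$ to control $\int_M\langle D\psi_n,\psi_n\rangle$ and $\int_M|\psi_n|^2|u_n|^2$ separately, exactly as in the no-boundary case, the extra boundary term $-\tfrac b4\int_{\partial M}u_n^4$ being handled because $b\ge 0$ and $H^1(M)\hookrightarrow L^4(\partial M)$ is the critical trace embedding. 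Up to a subsequence $u_n\rightharpoonup u_\infty$ in $H^1$, $\psi_n\rightharpoonup\psi_\infty$ in $H^{1/2}_+$, and by weak continuity of the Euler--Lagrange operator the pair $(u_\infty,\psi_\infty)$ solves \eqref{eq1}; elliptic regularity (interior plus the boundary conditions $B_g$ and $\mathbb B^+$, which are elliptic with compact resolvent as recalled in the excerpt) gives $u_\infty\in C^\infty(M)$, $\psi_\infty\in C^\infty(\Sigma M)$.

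Next I would run the inductive bubbling argument. Set $(u_n^1,\psi_n^1)=(u_n-u_\infty,\psi_n-\psi_\infty)$; this is a Palais--Smale sequence for $E^b$ at level $c-E^b(u_\infty,\psi_\infty)$ converging weakly to zero. If it converges strongly we are done with $m=\ell=0$. Otherwise the energy does not vanish, and by a concentration-compactness / Lions-type argument one locates a point where $|\nabla u_n^1|^2 + |\psi_n^1|^2\,(\text{suitably weighted})$ carries a fixed quantum of mass; a $\eps$-regularity lemma (no energy below a threshold $\eps_0$ can concentrate) forces genuine concentration at finitely many points. For each such point one extracts the concentration center and scale: either $\tilde x_n^k\to\tilde x^k\in\mathring M$ with $\mathrm{dist}(\tilde x_n^k,\partial M)/\tilde R_n^k\to\infty$ (interior case) or $x_n^k\to x^k\in\partial M$ with $\mathrm{dist}(x_n^k,\partial M)/R_n^k$ bounded (boundary case). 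In the interior case rescale via the exponential map $\tilde\rho_{n,k}(\cdot)=\exp_{\tilde x_n^k}(\tilde R_n^k\,\cdot)$ and in the boundary case via the Fermi coordinate patch $\rho_{n,k}=\mathcal F_{x^k}(R_n^k\,\cdot)$, using the conformal covariance of $(L_g,B_g)$ and of the Dirac operator with chiral bag conditions (Section \ref{sec:prop}) so that the rescaled metrics converge in $C^2_{loc}$ to $g_{\R^3}$ on $\R^3$ or $\R^3_+$ respectively. The rescaled pairs $(\tilde R_n^k)^{1/2}u_n\circ\tilde\rho_{n,k}$, $(\tilde R_n^k)\psi_n\circ\tilde\rho_{n,k}$ (resp. with $R_n^k$ and the Fermi patch) converge weakly to a nontrivial solution $(\tilde U_\infty^k,\tilde\Psi_\infty^k)$ of \eqref{eq:blowupint} on $\R^3$ (resp. $(U_\infty^k,\Psi_\infty^k)$ of \eqref{eq:blowup} on $\R^3_+$), using here the conformal weights $-\tfrac12$ for $u$ and $-1$ for $\psi$ that are exactly those appearing in the statement. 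On $\R^3$ one must check the rescaled boundary term disappears; on $\R^3_+$ the term $-\tfrac b4\int_{\partial\R^3_+}u^4$ survives with the same $b$, since the trace measure scales correctly — this is why $\partial M$-concentration produces $E^b_{\R^3_+}$-bubbles rather than $E_{\R^3}$-bubbles.

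Then I would subtract the bubble: define $u_n^2=u_n^1-(\tilde R_n^k)^{-1/2}\tilde\beta_k\,\tilde\sigma_{n,k}^*(\tilde U_\infty^k)$ and similarly for $\psi$, with the cutoffs $\tilde\beta_k$, $\beta_k$ as in the statement. The crux is the \emph{energy additivity} step: one shows $\|u_n^1\|^2=\|(\tilde R_n^k)^{-1/2}\tilde\beta_k\tilde\sigma_{n,k}^*\tilde U_\infty^k\|^2+\|u_n^2\|^2+o(1)$ and the analogous splitting for the spinorial and interaction terms, so that $(u_n^2,\psi_n^2)$ is again Palais--Smale at level $c-E^b(u_\infty,\psi_\infty)-E_{\R^3}(\tilde U_\infty^k,\tilde\Psi_\infty^k)$ with strictly less energy. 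The interaction cross-terms $\int_M\langle D\psi,\psi\rangle$ and $\int_M|\psi|^2|u|^2$ are controlled by scaling estimates: different bubbles have $\tilde R_n^j/\tilde R_n^k+\tilde R_n^k/\tilde R_n^j+\mathrm{dist}(\tilde x_n^j,\tilde x_n^k)^2/(\tilde R_n^j\tilde R_n^k)\to\infty$ (and analogously for mixed interior/boundary pairs), which makes all overlaps negligible in $L^3$ and $L^4$ by Hölder and the explicit decay of the limiting profiles. Since each bubble carries at least a fixed energy $\eps_0>0$ (ground-state bubbles being the minimal ones, as classified in \cite{BM} and the forthcoming sections), the induction terminates after finitely many steps $m+\ell$, and collecting the terms yields (i), (ii), (iii).

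The main obstacle I expect is precisely the boundary bubbling analysis: extracting the correct rescaling at a boundary concentration point requires the Fermi-coordinate flattening of $\partial M$ together with the conformal covariance of the \emph{pair} $(B_g,\mathbb B^+)$ — one must verify that under the conformal change implicit in the rescaling the Neumann-type condition $B_g u=bu^3$ transforms into $\partial_\nu u=bu^3$ on $\partial\R^3_+$ with the \emph{same} constant $b$, and that the chiral bag condition $\mathbb B^+\psi=0$ is preserved, so that the limit pair genuinely solves \eqref{eq:blowup}. The secondary difficulty is the energy-quantization bookkeeping when interior and boundary bubbles interact and when a boundary concentration point is approached by interior bubbles at comparable scale; here one needs the separation-of-scales dichotomy in the form above plus a careful Hölder estimate on the half-space cutoffs to guarantee no energy is lost in the trace term $\int_{\partial M}u_n^4$.
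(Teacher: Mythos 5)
Your proposal is correct in outline and follows essentially the same route as the paper: boundedness of the Palais--Smale sequence, extraction and regularity of the weak limit, an $\varepsilon$-regularity/concentration-set analysis separating interior from boundary points, rescaling by the exponential map (interior) and Fermi coordinates (boundary) with the conformal weights $-\tfrac12$ and $-1$, cutoff subtraction of one bubble at a time with energy additivity, and termination via the uniform lower bound on bubble energies coming from the hemisphere invariants $Y(\Sph^3_+,\partial\Sph^3_+,[g_0])$ and $\lambda^{+}_{\text{CHI}}$. The only cosmetic deviations (in the boundedness step one must test with the spectral projections $\psi_n^{\pm}$ rather than with $(0,\psi_n)$ alone, since the Dirac quadratic form is indefinite, and the paper simply takes the concentration points $x_n$ on $\partial M$ instead of invoking the $\mathrm{dist}(x_n,\partial M)/R_n$ dichotomy) do not alter the argument.
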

Roughly speaking, in items $i),ii)$ above we distinguish concentration profiles corresponding to interior and boundary points, while item $iii)$ gives the corresponding energy quantization. Notice that such result holds for arbitrary $b\geq0$.

\medskip
Next, we will focus on the case $b=0$ and we will let $E:=E^{0}$. We now consider the equation describing boundary concentration profiles \eqref{eq:blowup}. The case of interior blowup points has been treated in \cite{BM}.
As explained in Section \ref{sec:prop}, both equation \eqref{eq:blowup} and the functional \eqref{eq:bubbleaction} are conformally covariant, and then can be equivalently considered on the round hemisphere $(\Sph^3_+,g_0)$:
\begin{equation}\label{eq:bubblessphere}
\left\{\begin{array}{ll}
L_{g_0}u=u|\psi|^{2}\\
D_{g_0}\psi=u^{2}\psi \qquad \mbox{on $\Sph^3_+$}\\
B_{g_{0}}u=0\\
\mathbb{B}^{+}\psi=0\qquad \mbox{on $\partial\Sph^3_+$}
\end{array}
\right.
\end{equation}

\be\label{eq:bubblessphereaction}
E_{\Sph^3_+}=\frac{1}{2}\left( \int_{\Sph^3_+}|\nabla_{g_0} u|^{2}+\frac{R_{g_{0}}}{8}u^{2}+\langle D\psi,\psi\rangle - u^2\vert \psi\vert^2\, dv_{g_0}\right)\,.
\ee
As proved in Lemma \ref{lem:lowerbubbles}, for a non trivial solution $(u,\psi)\in H^1(\Sph^3_+)\times H^{1/2}_{+}(\Sigma_{g_0}\Sph^3_+)$ to \eqref{eq:bubblessphere} there holds
\be\label{eq:energygap}
E_{\Sph^3_+}(u,\psi)\geq \frac{1}{2}Y(\Sph^3_+,\partial\Sph^3_+,[g_0])\lambda^{+}_{\text{CHI}}(\Sph^3_+,\partial\Sph^3_+,[g_0])\,,
\ee
where $Y(\Sph^3_+,\partial\Sph^3_+,[g_0])$ and $\lambda^{+}_{\text{CHI}}(\Sph^3_+,\partial\Sph^3_+,[g_0])$ are the Yamabe and the chiral invariant, respectively, defined as in \eqref{eq:yamabeinv}, \eqref{eq:chiralbag}. We mention that a similar lower bound for non-trivial solution to critical Dirac equations on the round sphere has been proved by Isobe \cite{isobe}, in general dimension. Considering \eqref{eq:blowupint} and \eqref{eq:bubbleenergy} on the round sphere $(\Sph^n,g_0)$ a similar energy gap for non-trivial solutions is proved in \cite{MV3}, namely
\be\label{eq:gapsphere}
E_{\Sph^3}(u,\psi)\geq \frac{1}{2}Y(\Sph^3, [g_0])\lambda^{+}(\Sph^3, [g_0])\,,
\ee
where
\[
E_{\Sph^3}(u,\psi)=\frac{1}{2}\left( \int_{\Sph^3}|\nabla_{g_0} u|^{2}+\frac{R_{g_{0}}}{8}u^{2}+\langle D\psi,\psi\rangle - u^2\vert \psi\vert^2\, dv_{g_0}\right)\,,
\]
$Y(\Sph^3, [g_0])$ is the Yamabe invariant and the conformal invariant
\[
\lambda^{+}(\Sph^3, [g_0]):=\inf_{\bar{g}\in[g_{0}]}\vert\lambda^+_1(\bar{g})\vert \vol (M,\overline{g})^{1/3}
\]
is the analogue of \eqref{eq:chiralbag} on the round sphere.
\begin{remark}\label{rem:threshold}
Notice that since (see \cite{escobar,raulot})
\[
Y(\Sph^3_+,\partial\Sph^3_+,[g_0])<Y(\Sph^3, [g_0])\,,
\]
and
\[
 \lambda^{+}_{\text{CHI}}(\Sph^3_+,\partial\Sph^3_+,[g_0])<\lambda^{+}(\Sph^3, [g_0])\,,
\]
the (sharp) lower bound for the energy of boundary bubbles is strictly smaller than the one for interior bubbles. As a consequence, the former represents the threshold level for compact Palais-Smale sequences, see Theorem \ref{Aubin type result}.
\end{remark}

\begin{definition}
A non trivial solution $(u,\psi)\in H^1(\Sph^3_+)\times H^{1/2}_{+}(\Sigma_{g_0}\Sph^3_+)$ to \eqref{eq:bubblessphere} is called \emph{ground state solution} if
\be\label{eq:groundlevel}
E_{\Sph^3_+}(u,\psi)=\frac{1}{2}Y(\Sph^3_+,\partial\Sph^3_+,[g_0])\lambda^{+}_{\text{CHI}}(\Sph^3_+,\partial\Sph^3_+,[g_0])\,,
\ee
that is, equality holds in \eqref{eq:energygap}.
\end{definition}
The following result classifies ground states of \eqref{eq:bubblessphere}. We mention that the analogous result for Dirac-Einstein equations on the 3-sphere (i.e. for minimizers of \eqref{eq:gapsphere}) and for critical Dirac equations in arbitrary dimensions has been proved in \cite{BM,BMW}, respectively.

\begin{theorem}\label{thm:classification}
Let $(u,\psi)\in H^1(\Sph^3_+)\times H^{1/2}_{+}(\Sigma_{g_0}\Sph^3_+)$ be a ground state solution to \eqref{eq:bubblessphere}, and assume $u\geq0$. Then, up to a conformal diffeomorphism,  $u\equiv1$ and $\psi$ is a $\left(-\frac{1}{2} \right)-$Killing spinor. Namely, there exists a conformal diffeomorphism $f$ of the round $3$-hemisphere and a $\left(-\frac{1}{2} \right)-$Killing spinor $\Psi$ such that
\[
u=(\det(\dd f))^{1/6}
\]
and
\[
\psi=(\det(\dd f))^{1/3}F_{f^*g_0,g_0}(f^*\Psi)\,,
\]
where $F_{f^*g_0,g_0}$ is the isometry of spinor bundles for conformally related metrics, and the pullback $f^*\Psi$ is defined as in \eqref{eq:pullbackspinor}.
\end{theorem}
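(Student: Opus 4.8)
I would extract the rigidity from the proof of the energy gap in Lemma~\ref{lem:lowerbubbles}. Since equation \eqref{eq:blowup} and the functional \eqref{eq:bubbleaction} are conformally covariant (Section~\ref{sec:prop}), it suffices to work with a ground state of \eqref{eq:bubblessphere} on the round hemisphere. A ground state is non‑trivial, so from $L_{g_0}u=u|\psi|^2\ge 0$, the strong maximum principle and $u\ge 0$ one gets $u>0$ on $\Sph^3_+$, hence $\bar g:=u^4g_0$ is a genuine metric in $[g_0]$ with totally geodesic boundary (after the normalization $h_{g_0}=0$). Pairing \eqref{eq:bubblessphere} with $(u,\psi)$ gives $2E_{\Sph^3_+}(u,\psi)=\|u\|^2=\int_{\Sph^3_+}u^2|\psi|^2\,dv_{g_0}$; and by conformal covariance of the Dirac operator together with $D_{g_0}\psi=u^2\psi$, the spinor $\bar\psi:=u^{-2}\psi$ (via the bundle isometry $F_{\bar g,g_0}$) solves $D_{\bar g}\bar\psi=\bar\psi$ with $\mathbb B^+\bar\psi=0$, while $\int_{\Sph^3_+}|\bar\psi|_{\bar g}^2\,dv_{\bar g}=\int_{\Sph^3_+}u^2|\psi|^2\,dv_{g_0}$. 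Thus the bound \eqref{eq:energygap} factors through
\[
\|u\|^2\ \ge\ Y(\Sph^3_+,\partial\Sph^3_+,[g_0])\,\mathrm{Vol}(\Sph^3_+,\bar g)^{1/3}\ \ge\ Y(\Sph^3_+,\partial\Sph^3_+,[g_0])\,\lambda^{+}_{\mathrm{CHI}}(\Sph^3_+,\partial\Sph^3_+,[g_0]),
\]
the second inequality because $1$ is a Dirac eigenvalue for $\bar g$ under chiral bag conditions, so $|\lambda^+_1(\bar g)|\le 1$. A ground state turns both inequalities into equalities.

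Equality in the first one says that $u$ is an extremal of the Yamabe quotient \eqref{eq:yamabeinv} on $(\Sph^3_+,\partial\Sph^3_+,[g_0])$. By Escobar's classification of extremals of the Yamabe functional on the hemisphere with minimal boundary (equivalently, of the sharp Sobolev inequality on $\R^3_+$ with Neumann data), such $u$ are exactly the conformal factors of conformal diffeomorphisms: there is a conformal diffeomorphism $f$ of $(\Sph^3_+,g_0)$ with $\bar g=u^4g_0=f^*g_0$ and $u=(\det(\dd f))^{1/6}$. Using conformal covariance of the whole system \eqref{eq:bubblessphere}, I would replace $(u,\psi)$ by its pushforward under $f^{-1}$; this is again a ground state, and in the new gauge the metric is $g_0$ and $u\equiv 1$, so $B_{g_0}u=0$ is automatic. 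The first equation then reads $|\psi|^2\equiv R_{g_0}/8$, in particular $|\psi|$ is constant, and the second becomes $D_{g_0}\psi=\psi$ with $\mathbb B^+\psi=0$; equality in the second inequality above forces $\psi$ to realize the chiral bag invariant, i.e. to be a first eigenspinor.

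Finally I would invoke rigidity of the Friedrich/Hijazi‑type lower bound for the first Dirac eigenvalue under chiral bag boundary conditions on the round hemisphere (cf. Hijazi–Montiel–Roldán and Raulot): for the chiral bag condition the boundary contribution in the Schrödinger–Lichnerowicz integration by parts, rewritten through the twistor (Penrose) operator $P$, is non‑negative (in fact vanishes), so a first eigenspinor of constant norm must satisfy $P\psi=0$, i.e. $\nabla_X\psi=c\,X\cdot\psi$ for a constant $c$; thus $\psi$ is a Killing spinor, and on $(\Sph^3_+,g_0)$ the eigenspinors attaining the invariant are exactly the restrictions of Killing spinors on $\Sph^3$, the constant being $-\tfrac12$ in the normalization of \eqref{eq:bubblessphere}. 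Undoing the conformal change $f$ (using \eqref{eq:pullbackspinor} and the isometry $F_{f^*g_0,g_0}$) yields $u=(\det(\dd f))^{1/6}$ and $\psi=(\det(\dd f))^{1/3}F_{f^*g_0,g_0}(f^*\Psi)$ with $\Psi$ a $\left(-\tfrac12\right)$-Killing spinor, as claimed.

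The main obstacle is this last rigidity step: one must verify that the chiral bag condition kills all boundary terms in the Lichnerowicz–Penrose identity (this is precisely the geometric reason chiral bag conditions are the "good" ones, cf. Section~\ref{sec:prop}), and that the constant‑norm information — handed to us for free by the first equation once $u\equiv 1$ — is exactly what upgrades a twistor‑free spinor to a genuine Killing spinor; keeping the normalization constants (scalar curvature of the round hemisphere, Killing constant, Dirac eigenvalue) consistent is the bookkeeping to be done carefully. An alternative route is to reflect the ground state across $\partial\Sph^3_+$ using the chirality operator to obtain a solution of the interior problem \eqref{eq:blowupint} on $\Sph^3$: the energy doubles, and since $Y(\Sph^3_+,\partial\Sph^3_+,[g_0])=2^{-2/3}Y(\Sph^3,[g_0])$ and $\lambda^{+}_{\mathrm{CHI}}(\Sph^3_+,\partial\Sph^3_+,[g_0])=2^{-1/3}\lambda^{+}(\Sph^3,[g_0])$, the doubled pair is exactly a ground state of the interior problem, to which the classification of \cite{BM} applies; the difficulty then becomes arranging that the normalizing conformal diffeomorphism of $\Sph^3$ commutes with the equatorial reflection, so that it descends to a conformal diffeomorphism of the hemisphere.
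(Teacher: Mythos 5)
Your proposal is correct in substance and reaches the classification with the same two external inputs the paper relies on --- Escobar's uniqueness theorems on the hemisphere \cite{escobar} and the Hijazi-type rigidity under chiral bag conditions \cite{raulot} --- but assembled in the opposite order. The paper passes to the conformal metric $g=\tfrac{4}{9}u^4g_0$, where the spinor becomes a Dirac eigenspinor, derives $\lambda^{+}_{\text{CHI}}\ge\big(\int u^6\big)^{1/3}$ from the ground-state level, and then invokes equality in Raulot's Hijazi inequality to conclude directly that the spinor is Killing; constancy of the scalar curvature ($R_g=6$) then comes for free and feeds Escobar's theorems, producing the isometry $f$, after which everything is transported back. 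You instead split the identity as $\|u\|^2\ge Y\big(\int u^6\big)^{1/3}\ge Y\lambda^{+}_{\text{CHI}}$ (the second step via the eigenvalue $1$ of $D_{\bar g}$, which is legitimate because the chiral bag condition is conformally covariant), read equality in the first step as saying that $u$ is a minimizer of \eqref{eq:yamabeinv} (valid, since $B_{g_0}u=0$ makes the numerator equal to $\int u^2|\psi|^2$), use Escobar's classification of such minimizers to normalize the metric first, and only then apply the spinorial rigidity, which at that point is reduced to its model-case instance: on the round hemisphere every eigenspinor of $(D_{g_0},\mathbb{B}^+)$ for the first eigenvalue is Killing, because the chiral-bag boundary term in the spinorial Reilly formula vanishes (the equator is totally geodesic) and a twistor spinor that is a Dirac eigenspinor is automatically Killing --- the constant-norm information you emphasize is not actually needed for this last implication. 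What your route buys is that the full conformal equality case of the Hijazi inequality is replaced by its statement on the model; what the paper's route buys is that no a priori classification of Yamabe minimizers is invoked, roundness being deduced from the Killing equation. One caveat on constants, which you flagged yourself: with the paper's conventions the gauge-normalized scalar part is $u\equiv\sqrt{3/2}$, so that $D_{g_0}\psi=\tfrac{3}{2}\psi$, consistent with a $\big(-\tfrac12\big)$-Killing spinor; the paper's display \eqref{eq:bubblesnewmetric} contains the same slip, and its proof in fact yields $u=\sqrt{3/2}\,\det(\dd f)^{1/6}$, so this is shared bookkeeping rather than a gap in your argument.
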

The above results can be rewritten on the Euclidean space, in a more explicit form.
\begin{corollary}\label{cor:euclideanbubbles}
Let $(u,\psi)\in H^1(\R^3_+)\times H^{1/2}_+(\Sigma\R^3_+)$ be a ground state solution to \eqref{eq:blowup} with $u\geq0$. Then there exist $\lambda>0$, $y\in\R^2$ and  a parallel spinor $\Phi_0\in\Sigma\R^3_+$ such that, setting $\Phi=(\Phi_0-\nu\cdot\G\Phi_0)$ there holds
\be\label{eq:scalarpart}
u(x)=\left(\frac{2\lambda}{\lambda^2+\vert \tilde{x}-y\vert^2+x^2_3}\right)^{1/2}\,,\qquad x=(\tilde{x},x_3)\in\R^3_+
\ee
and
\be\label{eq:spinorpart}
\psi(x)=\left(\frac{2\lambda}{\lambda^2+\vert \tilde{x}-y\vert^2+x^2_3}\right)^{3/2}\left(1-\left(\frac{\tilde{x}-y}{\lambda},\frac{x_3}{\lambda} \right) \right)\cdot \Phi \,,\qquad x=(\tilde{x},x_3)\in\R^3_+
\ee
where $\tilde{x}\in\R^2$ and $x_3\geq0$.
\end{corollary}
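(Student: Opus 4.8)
We deduce the statement from Theorem~\ref{thm:classification} by transporting it to the Euclidean half-space through stereographic projection, exploiting the conformal covariance of \eqref{eq:blowup} and \eqref{eq:bubbleaction} recorded in Section~\ref{sec:prop}. Fix once and for all the inverse stereographic projection $\pi\colon\R^3_+\to\Sph^3_+\setminus\{p_0\}$ from a point $p_0$ lying on the equator $\partial\Sph^3_+$: it is a conformal diffeomorphism carrying $\partial\R^3_+$ onto $\partial\Sph^3_+\setminus\{p_0\}$ and satisfying $\pi^*g_0=u_0^{4}\,g_{\R^3}$ with $u_0(x)=\bigl(2/(1+|x|^2)\bigr)^{1/2}$. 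Via the conformal transformation rules (weight $\tfrac12$ for the scalar field, and the spinor-bundle isometry $F=F_{\pi^*g_0,g_{\R^3}}$ together with the pullback \eqref{eq:pullbackspinor} for the spinor), a solution of \eqref{eq:bubblessphere} is carried to a solution of \eqref{eq:blowup} with $b=0$, ground states going to ground states, and precomposition with conformal diffeomorphisms of $\Sph^3_+$ becomes precomposition with conformal automorphisms of $\R^3_+$. Therefore it suffices to write down, in Euclidean coordinates, the image under this correspondence of the model pair $(u,\psi)=(1,\Psi)$ provided by Theorem~\ref{thm:classification}, i.e.\ of $\bigl((\det \dd f)^{1/6},\,(\det\dd f)^{1/3}F_{f^*g_0,g_0}(f^*\Psi)\bigr)$, as $f$ ranges over the conformal diffeomorphisms of the hemisphere.

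For the scalar component, $\pi^*g_0=u_0^{4}g_{\R^3}$ gives at once that $u\equiv1$ on $\Sph^3_+$ corresponds to $u_0$ on $\R^3_+$, i.e.\ to \eqref{eq:scalarpart} with $\lambda=1$, $y=0$. The conformal automorphism group of $\R^3_+$ is generated by the isometries of $(\R^3_+,u_0^{4}g_{\R^3})$ --- which leave the solution $u_0$ unchanged and only reshuffle, within its finite-dimensional space, the parallel spinor appearing below --- together with the dilations $x\mapsto x/\lambda$ and the boundary translations $\tilde x\mapsto\tilde x-y$. Applying a dilation and a boundary translation to $u_0$ and accounting for the weight-$\tfrac12$ Jacobian factor yields $u(x)=\bigl(2\lambda/(\lambda^2+|\tilde x-y|^2+x_3^2)\bigr)^{1/2}$, which is \eqref{eq:scalarpart}; since the boundary condition $B_gu=0$ (with $h=0$, $b=0$) is conformally covariant, $\partial u/\partial\nu=0$ on $\partial\R^3_+$, consistently with $b=0$.

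For the spinor component, the $\left(-\frac{1}{2}\right)$-Killing spinors on the round $3$-sphere are explicit (as used in \cite{BM,BMW}): under $\pi$ and the isometry $F$ they correspond on $\R^3_+$, up to a universal multiplicative constant, to the spinor fields $\psi_0(x)=u_0(x)^{3}\,(1-x)\cdot\Phi_0$, where $\Phi_0$ is a parallel spinor of $g_{\R^3}$, $x$ is the position vector and $(1-x)\cdot$ is the associated Clifford endomorphism; this is checked either via the cited classifications, or directly, since one verifies that $(u_0,\psi_0)$ solves the Euclidean interior system \eqref{eq:blowupint} and the prefactor $u_0^{3}$ is forced by the Killing spinor having constant pointwise norm. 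It remains to impose the chiral bag condition $\mathbb{B}^{+}\psi=0$ on $\partial\R^3_+=\{x_3=0\}$. With $\nu$ the unit normal, $\nu\cdot\G$ is an involution that commutes with Clifford multiplication by vectors tangent to $\partial\R^3_+$ (because $\nu\cdot$ anticommutes with such vectors while $\G$ anticommutes with all Clifford multiplications), so the subbundle $\{\,\Phi_0-\nu\cdot\G\Phi_0 : \Phi_0\in\Sigma\R^3_+\,\}$ --- the image of the projection complementary to $\mathbb{B}^{+}$, into which $\mathbb{B}^{+}\psi=0$ forces the boundary value of $\psi$ --- is invariant under left Clifford multiplication by $1-\tilde x\cdot$ for every $\tilde x\in\R^2$. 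Hence $(1-\tilde x\cdot)\Phi$ satisfies the boundary condition for all $\tilde x$ precisely when $\Phi=\Phi_0-\nu\cdot\G\Phi_0$ for some parallel spinor $\Phi_0$. Reintroducing the dilation and boundary-translation parameters $\lambda,y$, which commute with the Clifford-algebraic manipulations at the boundary and multiply $\psi_0$ by the weight-$\tfrac32$ Jacobian factor $\bigl(2\lambda/(\lambda^2+|\tilde x-y|^2+x_3^2)\bigr)^{3/2}$, gives exactly \eqref{eq:spinorpart}.

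The routine parts are the scalar computation and the bookkeeping with the conformal parameters; the delicate point, and the one I would be most careful about, is the spinor computation of the third paragraph: correctly identifying the conformal transform of a $\left(-\frac{1}{2}\right)$-Killing spinor under $\pi$ --- i.e.\ pinning down the $u_0^{3}$ weight and the $(1-x)\cdot\Phi_0$ structure through the bundle isometry $F$ and the trivialization of $\Sigma\R^3_+$ --- and then verifying that the chiral bag boundary condition is compatible with this profile and isolates precisely the spinors of the form $\Phi_0-\nu\cdot\G\Phi_0$.
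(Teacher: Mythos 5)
Your argument is correct and follows essentially the same route as the paper's proof: the scalar part is the standard half-space Yamabe bubble obtained by conformal transformation, the spinor part is the stereographic pullback of a $\left(-\frac{1}{2}\right)$-Killing spinor as in \eqref{eq:Killing on Rn}, and the parameters $\lambda,y$ arise from dilations and boundary translations while rotations/hemisphere isometries only reshuffle the parallel spinor. Your explicit check that the chiral bag condition forces the boundary value into the $-1$-eigenspace of $\nu\cdot\G$ (which is invariant under tangential Clifford multiplication, hence compatible with the profile $(1-\tilde{x}\cdot)\Phi$ and realized exactly by $\Phi=\Phi_0-\nu\cdot\G\Phi_0$) is a correct elaboration of a point the paper only asserts.
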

\medskip

The last main results of the paper consist of the proof of an Aubin-type inequality and a related existence result for the problem \eqref{eq1}, with $b=0$ (see \cite[Section 5]{MV3} for similar results in the case of a manifold without boundary). We initially define the two functionals relevant to the statement of the result:
\begin{equation}\label{eq:functionals}
\tilde{E}(u,\psi)=\frac{\left(\displaystyle\int_{M}uL_{g}udv_{g}\right)\left(\displaystyle\int_{M}\langle D_g \psi,\psi\rangle dv_{g}\right)}{\displaystyle\int_{M}|u|^{2}|\psi|^{2}dv_{g}} ,\qquad I(\psi)=\frac{\displaystyle\int_{M}\langle D_g \psi,\psi\rangle dv_{g}}{\displaystyle\int_{M}|u|^{2}|\psi|^{2}dv_{g}} \;
\end{equation}
for any non-trivial  $(u,\psi) \in H^1(M) \times H^{\frac{1}{2}}_+(\Sigma M)$ with $\int_{M}|u|^{2}|\psi|^{2}\ dv_{g} \not=0$ and assuming (without loss of generality) that
$h_{g}=0$. These functional will be addressed in Section \ref{sec:aubin}. We also refer the reader to \cite{MV3} to get more context on the definition of these functionals. Now, let $H^{\frac{1}{2},-}_{+}$ be the negative space of $H^{\frac{1}{2}}_+(\Sigma M)$ according to the spectral decomposition of the Dirac operator and consider $P^{-}$ the projector on $H^{\frac{1}{2},-}$, as explained in Section \ref{sec:spinorspaces}.

We can define the following conformal constant (namely, depending only on the conformal class of the metric $g$)
\begin{equation}\label{Ytilde}
\tilde{Y} (M, \partial M, [g])=\inf\left\{\begin{array}{ll}
\tilde{E}(u,\psi); \text{ where  $(u,\psi)\in H^{1}(M)\setminus\{0\}\times H^{\frac{1}{2}}_+(\Sigma M)\setminus \{0\}$ s.t. }\\
\\
I(\psi)> 0, \quad  P^{-}\left(D_g \psi-I(\psi)u^{2}\psi\right) =0\end{array}  \right\} \;  .
\end{equation}
We point out here that there is an abuse of notation writing $P^{-}\left(D_g \psi-I(\psi)u^{2}\psi\right)$ instead of $P^{-}\left(\psi-I(\psi)D_{g}^{-1}(u^{2}\psi\right)$. In fact, the equation $P^{-}\left(D_g \psi-I(\psi)u^{2}\psi\right) =0$ should be understood in the sense of duality, that is:
$$\langle D_{g}\psi-I(\psi)u^{2}\psi,P^{-}(\varphi)\rangle_{H^{-\frac{1}{2}},H^{\frac{1}{2}}}=0 \text{ for all } \varphi\in H^{\frac{1}{2}}_{+}(\Sigma M).$$
We want to compare $\tilde{Y} (M, \partial M, [g])$ with the invariant on the sphere $(\Sph^3_+,g_0)$, namely,
$$\tilde{Y} (\Sph^3_+,\partial\Sph^3_+,[g_0]):=Y(\Sph^3_+,\partial\Sph^3_+,[g_0])\lambda^{+}_{\text{CHI}}(\Sph^3_+,\partial\Sph^3_+,[g_0]) \; .$$
This comparison will be entangled to an existence result for \eqref{eq1}, in analogy with the classical Yamabe problem \cite{escobar,leeparker}. Indeed, we prove the following:
\begin{theorem}\label{Aubin type result}
Let $(M,g)$ be a compact oriented three-dimensional Riemannian manifold with boundary. It holds:
\begin{equation}\label{Aubin inequality}
  Y(M, \partial M, [g])\lambda^{+}_{\text{CHI}}(M, \partial M, [g]) \leq\tilde{Y}(M, \partial M, [g])  \leq \tilde{Y} (\Sph^3_+,\partial\Sph^3_+,[g_0])\, .
\end{equation}

Moreover, if $$\tilde{Y}(M, \partial M, [g]) <  \tilde{Y} (\Sph^3_+,\partial\Sph^3_+,[g_0]) ,$$
then the problem $(\ref{eq1})$, with $b=0$, has a non-trivial ground state solution.
\end{theorem}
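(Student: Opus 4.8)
The plan is to prove the two inequalities in \eqref{Aubin inequality} separately and then use the strict version together with the Palais–Smale analysis from Theorem \ref{first} to produce a ground state. For the lower bound $Y(M,\partial M,[g])\lambda^{+}_{\text{CHI}}(M,\partial M,[g])\le\tilde Y(M,\partial M,[g])$, I would take a competitor $(u,\psi)$ admissible for $\tilde Y$, so that $I(\psi)>0$ and $P^{-}(D_g\psi-I(\psi)u^2\psi)=0$. Decompose $\psi=\psi^{+}+\psi^{-}$ with respect to the spectral splitting of $D_g$; testing the constraint against $\psi^{-}$ and against $\psi^{+}$ and manipulating $\int\langle D_g\psi,\psi\rangle$, one expresses $\tilde E(u,\psi)$ in a form that, after minimizing over the scaling of $u$ and over the free part of $\psi$, is bounded below by the product of the Yamabe quotient (which by definition controls $\int uL_gu$ against $\|u\|_{L^6}^2$ after absorbing the $|\psi|^2u^2$ term via Hölder) and the chiral bag quotient $\lambda^{+}_{\text{CHI}}$. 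This is essentially the manifold analogue of the algebraic identity used on the sphere; I expect it to be a fairly direct computation once the constraint is exploited correctly, paralleling \cite{MV3}.

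For the upper bound $\tilde Y(M,\partial M,[g])\le\tilde Y(\Sph^3_+,\partial\Sph^3_+,[g_0])$, the strategy is the usual test-function argument of Aubin–Escobar type. Using Fermi coordinates near a boundary point $x_0\in\partial M$, I would transplant the explicit ground state bubbles of Corollary \ref{cor:euclideanbubbles} — the scalar part $u_\lambda(x)=\bigl(2\lambda/(\lambda^2+|\tilde x-y|^2+x_3^2)\bigr)^{1/2}$ and the associated spinor $\psi_\lambda$ built from a parallel spinor — cut off by a fixed bump function supported in a small Fermi chart, and set $y=0$. Inserting $(u_\lambda,\psi_\lambda)$ into $\tilde E$ and expanding as $\lambda\to0$, the leading terms reproduce the Euclidean (equivalently hemispherical) energies, the boundary mean-curvature term contributes at lower order (or can be killed by the normalization $h_g=0$ already assumed), and the curvature corrections are $o(1)$; hence $\tilde Y(M,\partial M,[g])\le\tilde E(u_\lambda,\psi_\lambda)\to\tilde Y(\Sph^3_+,\partial\Sph^3_+,[g_0])$. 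One must check that the cut-off spinor still satisfies $\mathbb B^{+}\psi_\lambda=0$ on $\partial M$ (true, since the chiral bag condition is local and compatible with the Fermi-coordinate flattening), and that the admissibility constraint $P^{-}(D_g\psi-I(\psi)u^2\psi)=0$ can be restored by a small correction — this is the one genuinely delicate point of this half, handled by an implicit-function/continuity argument showing the constraint set is nonempty near the transplanted bubble.

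Granting the strict inequality, the existence of a ground state follows by a constrained minimization plus concentration-compactness. I would minimize $\tilde E$ (or, equivalently, work with $E=E^0$ on a Nehari-type set) over the admissible class defining $\tilde Y(M,\partial M,[g])$, take a minimizing Palais–Smale sequence, and apply Theorem \ref{first}. The energy quantization iii) together with the energy gap \eqref{eq:energygap}, the gap for interior bubbles \eqref{eq:gapsphere}, and Remark \ref{rem:threshold} (boundary bubbles are the cheapest, at level $\tfrac12 Y(\Sph^3_+,\partial\Sph^3_+,[g_0])\lambda^{+}_{\text{CHI}}(\Sph^3_+,\partial\Sph^3_+,[g_0])=\tfrac12\tilde Y(\Sph^3_+,\partial\Sph^3_+,[g_0])$) shows that if any bubble splits off, the energy jumps by at least $\tfrac12\tilde Y(\Sph^3_+,\partial\Sph^3_+,[g_0])$; since our minimizing level is strictly below this threshold, no bubbling can occur, so $(u_n,\psi_n)\to(u_\infty,\psi_\infty)$ strongly and the limit is a nontrivial solution of \eqref{eq1} with $b=0$, realizing $\tilde Y(M,\partial M,[g])$. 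The main obstacle I anticipate is not the compactness step — that is dictated by the threshold bookkeeping — but rather controlling the nonlocal admissibility constraint $P^{-}(D_g\psi-I(\psi)u^2\psi)=0$ throughout: both in showing it is preserved (or approximately preserved, with a controllable error) along the minimizing sequence and its limit, and in the test-function computation where the transplanted bubble must be nudged back onto the constraint without spoiling the energy estimate. Handling the nonlinear, nonlocal coupling of $u$ and $\psi$ in this constraint, together with the low regularity $H^{1/2}$ of the spinor, is where the real work lies, and I would isolate it as a separate lemma before assembling the proof.
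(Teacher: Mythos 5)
Your overall route coincides with the paper's: the same test-bubble plus implicit-function correction for the upper bound, and the same constrained (generalized Nehari) minimization with Ekeland's principle, a projector argument to turn the constrained (PS) sequence into a free one, and the quantization of Theorem \ref{first} below the threshold $\tfrac12\tilde Y(\Sph^3_+,\partial\Sph^3_+,[g_0])$ for existence, with nontriviality of the limit coming from the positive level $\tfrac12\tilde Y(M,\partial M,[g])$. The one place where your sketch, as written, would not go through is the first inequality, which you describe as ``a fairly direct computation once the constraint is exploited correctly.'' The invariant $\lambda^{+}_{\text{CHI}}(M,\partial M,[g])$ is defined in \eqref{eq:chiralbag} as an infimum of $\vert\lambda_1^{+}(\bar g)\vert\,\vol(M,\bar g)^{1/3}$ over the conformal class, so it cannot be produced merely by testing the constraint against $\psi^{\pm}$: for an admissible pair one only knows $P^{-}(D_g\psi-I(\psi)u^2\psi)=0$, not the pointwise equation $D_g\psi=I(\psi)u^2\psi$, so the variational characterization of $\lambda^{+}_{\text{CHI}}$ used for exact solutions in Lemma \ref{lem:lowerbubbles} is unavailable. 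The missing ingredient is Proposition \ref{tildelambda}: the constrained infimum $\tilde\lambda_u=\inf\{I(\psi):\ I(\psi)>0,\ P^{-}(D_g\psi-I(\psi)u^2\psi)=0\}$ is positive, \emph{attained}, and equals the first chiral-bag eigenvalue $\lambda_1^{+}(g_u)$ of the conformal metric $g_u=\tfrac49u^4g$; only with this identification does the chain $\int_M uL_gu\cdot I(\psi)\ \geq\ Y(M,\partial M,[g])\bigl(\int_M u^6\,dv_g\bigr)^{1/3}\lambda_1^{+}(g_u)\ \geq\ Y(M,\partial M,[g])\,\lambda^{+}_{\text{CHI}}(M,\partial M,[g])$ close, since $\bigl(\int_M u^6\bigr)^{1/3}$ is comparable to $\vol(M,g_u)^{1/3}$. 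The attainment is not algebra: it needs the constraint set to be a manifold, Ekeland's principle, and the adjoint-projector trick to show the constrained (PS) sequence is a genuine one — precisely the machinery you (correctly) anticipate for the existence step, so you should promote it to the separate lemma you mention and use it in both places. The rest of your plan (Fermi-coordinate transplantation of the Corollary \ref{cor:euclideanbubbles} bubble at a boundary point, with an $H^{\frac12,-}_{+}$ correction $h_\lambda\to0$ restoring the constraint via the implicit function theorem, and the threshold bookkeeping ruling out both boundary and interior bubbles) is exactly the paper's argument.
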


 \subsection{Outline of the paper}
In Section \ref{sec:prop} we collect some preliminary results about the operators involved and their conformal covariance, while in Section \ref{sec:reg} we prove a regularity result for solutions to \eqref{eq1}. We then analyze the bubbling phenomenon in Section \ref{sec:PS}, proving Theorem \ref{first}. Ground state bubbles are classified, as stated in Theorem \ref{thm:classification}, in Section \ref{sec:groundstates}. Finally, we prove the Aubin-type inequality \eqref{Aubin inequality} and the related existence result stated in Theorem \ref{Aubin type result}, in Section \ref{sec:aubin}.

\subsection*{Acknowledgements.}

The authors wish to thank the reviewer for careful reading the manuscript and for the accurate remarks.
\subsection*{Data availability statement}
 Data sharing not applicable to this article as no datasets were generated or analysed during the current study.

\section{Preliminary properties and Conformal Covariance}\label{sec:prop}

In this section we review some notions on spin structures and Dirac operators useful in the sequel, for the convenience of the reader. We refer to \cite{Jost,LawsonMichelsohn} for more details. We also define the Sobolev spaces that will play a role in the subsequent analysis and recall some of their properties.
\smallskip

The conformal Laplacian acting on functions on the Riemannian manifold $(M,g)$ is defined by
$$L_g u:=-\Delta_g u+\frac{1}{8}R_g u ,$$
where $\Delta_g$ is the standard Laplace-Beltrami operator and $R_g$ is the scalar curvature.
As already observed, $(L_{g},B_{g})$ is elliptic with compact resolvent, where the boundary condition $B_g$ is defined in \eqref{eq1}. We refer the reader to \cite[Chap~V]{ADN} and \cite[Chap~2,Sec~5]{LM}, for more details.

We will denote by $H^1(M)$ the usual Sobolev space on $M$. By the Sobolev embedding and trace theorems there is a continuous embedding $$H^1(M)\hookrightarrow L^p(M), \quad 1\leq p \leq 6 \text{ and } H^{1}(M)\hookrightarrow L^{q}(\partial M), \quad 1\leq q\leq 4,$$
which are compact if $1\leq p <6$ and $1\leq q<4$.

 \subsection{Spin structure and the Dirac operator}
 Let $(M,g)$ be an oriented Riemannian manifold, and let $P_{SO}(M,g)$ be its frame bundle.
 \begin{definition}
 A \textit{spin structure} on $(M,g)$ is a pair $(P_{Spin} (M,g),\sigma)$, where $P_{Spin} (M,g)$ is a $Spin(n)$-principal bundle and $\sigma:P_{Spin} (M,g)\rightarrow P_{SO}(M,g)$ is a 2-fold covering map, which is the non-trivial covering $\lambda:Spin(n)\rightarrow SO(n)$ on each fiber.
 \end{definition}

In other words, the quotient of each fiber by $\{-1,1\}\simeq\mathbb{Z}_{2}$ is isomorphic to the frame bundle of $M$, so that the following diagram commutes
\begin{center}
 \begin{tikzcd}
 P_{Spin}(M,g)\arrow[rr,"\sigma"]\arrow[dr]& &P_{SO}(M,g)\arrow[dl]\\
 &M &
 \end{tikzcd}
\end{center}
A Riemannian manifold $(M,g)$ endowed with a spin structure is called a \textit{spin manifold}.

In particular, the Euclidean half-space $(\R^{n}_+,g_{\R^n})$ and the round hemisphere $(\mathbb{S}^{n}_+,g_0)$ with $n\geq 2$, that are relevant for our purposes, admit a unique spin structure.

\begin{definition}
The \emph{complex spinor bundle} $\Sigma M\to M$ is the vector bundle associated to the $Spin(n)$-principal bundle $P_{Spin}(M,g)$ via the complex spinor representation of~$Spin(n)$.
\end{definition}
The complex spinor bundle~$\Sigma M$ has rank $N=2^{[\frac{n}{2}]}$ and it is endowed with a canonical spin connection~$\nabla$ (which is a lift of the Levi-Civita connection, denoted by the same symbol) and a Hermitian metric~$g$ shortly denoted by $(\cdot, \cdot)$. We will be using the euclidean structure rather than the Hermitian one, hence, we will consider $\langle \cdot,\cdot \rangle$, the real part of $(\cdot,\cdot)$.

In particular, the spinor bundle of the Euclidean half-space $\R^n_+$ is trivial, so that we can identify spinors with vector-valued functions $\psi:\R^n\to\C^N$.
\smallskip

The \emph{Clifford multiplication} is a map $\cdot: TM\to \operatorname{End}_\C(\Sigma M)$ verifying the \emph{Clifford relation}
\begin{equation}
 X\cdot Y+Y\cdot X=-2g(X,Y){\1}_{\Sigma M},
\end{equation}
for any tangent vector fields $X,Y\in\Gamma(TM)$, and is compatible with the bundle metric $g$.

Taking a local oriented orthonormal tangent frame $(e_j)^3_{j=1}$ the Dirac operator is defined as
\[
D^g\psi:=\sum^3_{j=1}e_j\cdot\nabla^g_{e_j}\psi\,,\qquad \psi\in\Gamma(\Sigma M)\,.
\]

Given~$\alpha\in\C$, a non-zero spinor field~$\psi\in\Gamma(\Sigma M)$ is called~$\alpha$-Killing if
 \begin{equation}\label{eq:killingequation}
  \nabla^g_X\psi=\alpha X\cdot\psi,\qquad \forall X\in\Gamma(TM).
 \end{equation}

For more information on Killing spinors we refer the reader to~\cite[Appendix A]{diracspectrum}. On $(\Sph^n_+,g_0)$ $\alpha$-Killing spinors only exist for $\alpha=\pm1/2$ and are restriction of $\alpha$-Killing spinors on the round sphere, and via the stereographic projection the pull-back on the Euclidean half-space $\R^3_+$ of the~$\pm\frac{1}{2}$-Killing spinors have the form
\begin{equation}\label{eq:Killing on Rn}
 \Psi(x)
 =\left( \frac{2}{1+|x|^2}\right)^{\frac{n}{2}}({\1}\pm x)\cdot\Phi_0
\end{equation}
where~$\1$ denotes the identity endomorphism of the spinor bundle $\Sigma_{g_{\R^3_+}}\R^3_+$ and $\Phi_0$ is a parallel spinor, see e.g. \cite{raulot2}.

Finally, observe that $\partial M$, being an oriented hypersurface of $M$, is itself a spin manifold and its spinor bundle is obtained by restricting $\Sigma M$ to the boundary of $M$.
 In particular in odd dimensions the spinor bundle restricted to $\partial M $ can be identified with the intrinsic bundle on $\partial M$, namely
 \be\label{eq:restriction}
 \Sigma M_{\vert\partial M} = \Sigma\partial M\,.
 \ee

 \subsection{Chiral bag boundary conditions and the chiral invariant}
When the Dirac operator is studied on manifold with boundaries, one can impose different kinds of boundary conditions depending on the physical model or the invariant that one needs to compute. For instance in \cite{APS} the authors introduced the celebrated non-local boundary condition in order to establish index theorems. The (APS) boundary condition was also extensively used to establish other results see \cite{HE2, Wit}. Another interesting boundary condition is the MITbag condition introduced in the study of fields confined in a finite region of the space. The advantage of this condition is its local aspect. In our case, we are interested in the chiral boundary condition (CHI) introduced in the study of the mass of asymptotically flat manifolds as in \cite{HE} and for the proof of the positive mass theorem for manifolds with boundary as in \cite{ABD}. This boundary condition is more adequate for our problem mainly because it is well behaved under a conformal change of the metric as it was expressed in \cite{raulot,raulot2}. We refer to reader to \cite{HMR} for more details on the above mentioned boundary conditions.

 A chirality operator on $M$ is a linear map $\G:\Gamma(\Sigma M)\to\Gamma(\Sigma M)$ such that
 \be\label{eq:chirality}
 \begin{split}
 \G^2&=\1\,,\qquad \langle\G \psi,\G\varphi\rangle=\langle\psi,\varphi\rangle\,, \\
 \nabla_X(\G\psi)&=\G\nabla_X\psi\,,\qquad X\cdot\G\psi=-\G X\cdot\psi\,,
 \end{split}
 \ee
 for each vector field $X$ and spinor fields $\psi,\varphi$. A typical example of a chirality operator in three dimensions is when the manifold is a space-like hypersurface of a four dimensional Lorentz manifold. $G$ in this case is defined by the Clifford multiplication by a unit time-like vector field. We refer the reader to \cite[Section~2]{HE} for more details about this example, and to the discussion in the proof of \cite[Thm.3.2]{Chrusciel-Maerten}.

 Given a chirality operator $\G$, the fibre preserving endomorphism $\nu\cdot\G:\Gamma(\Sigma\partial M)\to\Gamma(\Sigma\partial M)$ is self-adjoint with respect to the pointwise Hermitian product and squares to the identity. Then it has eigenvalues $\pm1$ and
 \be\label{eq:B+}
 \mathbb{B}^\pm_g:=\frac{1}{2}({\1}\pm\nu\cdot\G) : L^2(\Sigma\partial M)\to L^2(V^\pm)
 \ee
 is the the projector onto the eigensubbundle $V^\pm$ corresponding to the eigenvalue $\pm1$. Such operator defines an elliptic boundary condition for $D_g$ called \emph{chiral bag boundary condition}. More precisely, in \cite{fs}  the authors show that
 \[
 D_g:H^1_+(\Sigma M):=\{ \varphi\in H^1(\Sigma M) \,:\, \mathbb{B}^+_g(\varphi_{\vert \partial M})=0 \}\to L^2(\Sigma M)
 \]
 is a self-adjoint Fredholm operator, whose spectrum consists of isolated real eigenvalues with finite multiplicity. In fact, since $D_{g}$ is elliptic of order 1 and self-adjoint, it is Fredholm as an operator $D_{g}:H^{\frac{1}{2}}_{+}(\Sigma M):=\{ \varphi\in H^{\frac{1}{2}}(\Sigma M) \,:\, \mathbb{B}^+_g(\varphi_{\vert \partial M})=0 \} \to H^{-\frac{1}{2}}(\Sigma M)$. More details about these fractional Sobolev spaces will be provided in the next section.

 \begin{definition}
 The chiral bag invariant is defined as
 \be\label{eq:chiralbag}
 \lambda^\pm_{CHI}(M,\partial M, [g]):=\inf_{\bar{g}\in[g]}\vert\lambda^\pm_1(\bar{g})\vert \vol (M,\overline{g})^{1/n}\,,
 \ee
 where $\lambda^\pm_1(g)$ s the smallest positive/negative eigenvalue of the Dirac operator under chiral bag boundary conditions.
 \end{definition}

 If $n\geq 3$ the Hijazi inequality on manifolds with boundary \cite{raulot} relates $\lambda^\pm_1(g)$ to the first eigenvalue of the conformal Laplacian $\mu_1(L_g)$:
 \[
 \lambda^\pm_1(g)^2\geq\frac{n}{4(n-1)}\mu_1(L_g) \,,
 \]
 where equality holds if and only if $(M,g)$ is isometric to a round half-sphere. Moreover, there holds
 \be\label{eq:}
 \lambda^\pm_{CHI}(M,\partial M,[g])\geq\frac{n}{4(n-1)}Y(M,\partial M,[g])\,.
 \ee
  \subsection{Sobolev spaces of spinors}\label{sec:spinorspaces}

Using the spectral decomposition of the Dirac operator with chiral bag boundary conditions one can define fractional Sobolev spaces of spinors. Of particular interest for us will be the space $H^{\frac{1}{2}}_+(\Sigma M)$. As already observed, the Dirac operator $D_g$ has compact resolvent and there exists a complete $L^2$-orthonormal basis of eigenspinors $\{\psi_i\}_{i\in\mathbb{Z}}$ of the operator
$$D_g\psi_i=\lambda_i \psi_i ,$$
and the eigenvalues $\{\lambda_i\}_{i\in\mathbb{Z}}$ are unbounded, that is $|\lambda_i|\rightarrow\infty$, as $|i|\rightarrow\infty$.
Now if $\psi\in L^{2}(\Sigma M)$, there holds
$$\psi=\sum_{i\in \mathbb{Z}}a_{i}\psi_{i}.$$
so that we can define the unbounded operator $|D_g|^{s}: L^{2}(\Sigma M)\rightarrow L^{2}(\Sigma M)$ by
$$|D_g|^{s}(\psi)=\sum_{i\in \mathbb{Z}} a_{i}|\lambda_{i}|^{s}\psi_{i}.$$
We denote by $H^s_+(\Sigma M)$ the domain of $|D_g|^{s}$, namely there holds $\psi\in H^s_+(\Sigma M)$ if and only if
$$\sum_{i\in \mathbb{Z}} a_{i}^2|\lambda_{i}|^{2s}<+\infty .$$
 For $s >0$, the following inner product
$$\langle u,v\rangle_{s}=\langle|D_g|^{s}u,|D_g|^{s}v\rangle_{L^{2}},$$
induces a norm on $H^{s}_{+}(\Sigma M)$; we will take
$$\langle u,u\rangle:=\langle u,u\rangle_{\frac{1}{2}}=\|u\|^{2}$$
as our standard norm for the space $H^{\frac{1}{2}}_{+}(\Sigma M)$. The Sobolev embedding theorems ensures that there is a continuous embedding
$$H^{\frac{1}{2}}_{+}(\Sigma M) \hookrightarrow L^p(\Sigma M), \quad 1\leq p \leq 3 ,$$
which is compact if $1\leq p <3$.
Finally, we will decompose $H^{\frac{1}{2}}_{+}(\Sigma M)$ according to the spectrum of $D_g$. Let us consider the $L^2$-orthonormal basis of eigenspinors $\{\psi_i\}_{i\in\mathbb{Z}}$: we denote by $\psi_i^-$ the eigenspinors with negative eigenvalue, $\psi_i^+$ the eigenspinors with positive eigenvalue and $\psi_i^0$ the eigenspinors with zero eigenvalue; observe that the kernel of $D_g$ is finite dimensional. Now we set
$$H^{\frac{1}{2},-}_+:=\overline{\text{span}\{\psi_i^-\}_{i\in\mathbb{Z}}},\quad
H^{\frac{1}{2},0}_+:=\text{span}\{\psi_i^0\}_{i\in\mathbb{Z}}, \quad
H^{\frac{1}{2},+}_+:=\overline{\text{span}\{\psi_i^+\}_{i\in\mathbb{Z}}},$$
where the closure is taken with respect to the $H^{\frac{1}{2}}$-topology. Therefore we have the orthogonal decomposition  $H^{\frac{1}{2}}_+(\Sigma M)$ as:
$$H^{\frac{1}{2}}_+(\Sigma M)=H^{\frac{1}{2},-}_+\oplus H^{\frac{1}{2},0}_+\oplus H^{\frac{1}{2},+}_+.$$
Also, we let $P^{+}$ and $P^{-}$ be the projectors on $H^{\frac{1}{2},+}_+$ and $H^{\frac{1}{2},-}_+$ respectively. Depending on the situation, it will be practical to denote by $\mathcal{H}$ the product space $\mathcal{H}=H^1(M) \times H^{\frac{1}{2}}_+(\Sigma M)$.
\medskip

Observe that some of the result of the present paper will be for manifolds with {\emph positive} Yamabe invariant $Y(M,\partial M,[g])>0$.  This implies that there are no harmonic spinors, that is, the kernel of $D_g$ is trivial.


  \subsection{Conformal covariance}\label{sec:conformalcovariance}
  In this section we recall known formulae relating the Dirac and conformal Laplace operators for conformally equivalent metrics, see  e.g. \cite{diracspectrum,hitchin}. To this aim the various geometric objects are explicitly labeled with the corresponding metric $g$, e.g.~$\Sigma_g M, \nabla^{g}$, $D_g$, etc.
 \smallskip

Fix a smooth function $f\in C^\infty(M)$ and consider the conformal metric $g_f=e^{2f}g$, which induces an isometric isomorphism of spinor bundles
\begin{equation}\label{eq:covariantoperators}
 F\equiv F_{g,g_f}\colon (\Sigma_{g}M, g)\to (\Sigma_{g_f}M, g_f)\,.
\end{equation}
Then there holds
\begin{align}\label{eq:conformaldirac}
 D_{g_f}F(e^{-\frac{n-1}{2}f}\psi)
 = e^{-\frac{n+1}{2}f}F(D_g\psi),
\end{align}
\begin{align}\label{eq:conformalpenrose}
 L_{g_f}(e^{-\frac{n-2}{2}f}u)=e^{-\frac{n+2}{2}f}L_g u\,.
\end{align}

The following quantities are conformally invariant. Setting
\be\label{eq:rescaling}
\varphi:=F(e^{-\frac{n-1}{2}f}\psi)\,,\qquad v:=e^{-\frac{n-2}{2}f}u
\ee
there holds
\begin{equation}
\label{eq:conformalinvariance}
\begin{split}
\int_{M} uL_g u \,d{v_g} &=\int_{M}vL_{g_f} v\, d{v_{g_f}}\,,\qquad\int_{M} \langle D_g \psi,\psi\rangle\,d{v_g} =  \int_{M} \langle D_{g_f} \varphi,\varphi\rangle \,d{v_{g_f}} \\
\int_{M}\vert u\vert^6 d{v_g}&=\int_{M}\vert v\vert^6 d{v_{g_f}}\,,\qquad\int_{M} |\psi|^{3}\,d{v_g} =  \int_{M} |\varphi|^{3} \,d{v_{g_f}} \\
\int_{M}\vert u\vert^2\vert\psi\vert^2 d{v_g} &=\int_{M}\vert v\vert^2\vert\varphi\vert^2\,d{v_{g_f}}  \,, \qquad \int_{\partial M}u^{4}\ d\sigma_{g}=\int_{\partial M} v^{4}\ d\sigma_{g_{f}}\,.
\end{split}
\end{equation}
For boundary terms, notice that from the well-known formula for the mean curvature (see e.g. \cite{escobar}), $h_{g_f}=e^{-f}(h_g+\frac{\partial f}{\partial\nu})$, where $\nu$ is the outward normal to $\partial M$, one gets
\be\label{eq:Binvariance}
\int_{\partial M}v B_{g_f}v\,d\sigma_{g_f}=\int_{\partial M}u B_{g}u\,d\sigma_{g}\,.
\ee
As mentioned above, Chiral bag boundary conditions behave nicely with respect to conformal changes of metrics. Namely, if $\G$ is a chirality operator on $\Sigma_g M$, then
\[
\overline{\G}:=F\circ\G\circ F^{-1}:\Sigma_{\overline{g}}M\to\Sigma_{\overline{g}}M
\]
is a chirality operator for the metric $\overline{g}$ and the associated orthogonal projection $\mathbb{B}^\pm_{\overline{g}}$ as in \eqref{eq:B+} define elliptic boundary condition for $D_{\overline{g}}$ ( see \cite{HMR}).
Consequently the action \eqref{eq:energy} is conformally invariant, and hence also equation \eqref{eq1}. \medskip

In particular, using a conformal change of the metric, \eqref{eq:blowup} can be seen as an equation on the round hemisphere $(\Sph^3_+,g_0)$. This is done using the stereographic projection $\pi:\Sph^3_+\setminus{N}\to\R^3_+$, as a conformal transformation, where $N\in \partial\Sph^3_+$ is the north pole.  Indeed, one has $(\pi^{-1})^*g_0=\frac{4}{(1+\vert x\vert^2)^2}g_{\R^3}$ with $x\in\R^3$. Therefore, if $(u,\psi)\in \mathring{H}^{1}(\R^3_+)\times \mathring{H}^{1/2}_{+}(\R^3_+,\C^2)$ is a weak solution to \eqref{eq1}, then one can consider $(v,\varphi)$ defined in by \eqref{eq:rescaling} with $e^{2f}=\frac{4}{(1+\vert x\vert^2)^2}$. Such a pair is in $H^1(\Sph^3_+)\times H^{1/2}(\Sigma_{g_0}\Sph^3_+)$ by \eqref{eq:conformalinvariance}, and it is a weak solutions to \eqref{eq:bubblessphere}. Notice that, a priori, $(v,\varphi)$ is a weak solution only on $\Sph^3_+\setminus \{N\}$. However, the removability of the singularity at the north pole $N$ can be proved by standard cut-off arguments.

Finally, we conclude this section by observing that, using \eqref{eq:conformalinvariance}, the functional \eqref{eq:bubbleaction} is also conformally invariant, i.e.
\[
E^{b}_{\R^3_+}(u,\psi)=E^{b}_{\Sph^3_+}(v,\varphi)\,.
\]
\section{Regularity}\label{sec:reg}
In this section we prove a regularity result for solutions to \eqref{eq1}, using an argument similar to \cite{isobe, MV3}. The the equations considered are critical and compared to \cite{MV3}, we do have a critical behavior at the boundary. In order to initiate a bootstrap arguments one needs an extra step to improve the regularity. This is usually what is referred to in the literature by $\varepsilon$-regularity, where the smallness of some critical norm, induces an improved regularity.

\begin{theorem}\label{thmreg}
Let $(u,\psi)\in H^1(M) \times H^{\frac{1}{2}}_{+}(\Sigma M)$ be a weak solution to the system \eqref{eq1}, then $(u,\psi)\in C^{\infty}(M) \times C^{\infty}(\Sigma M)$.
\end{theorem}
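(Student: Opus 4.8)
The plan is to prove the regularity statement by a two-stage argument: first an $\varepsilon$-regularity lemma that lifts the weak solution from the critical Sobolev setting into some $L^p$ with $p$ above the critical threshold, and then a standard elliptic bootstrap. Throughout I would treat the interior and the boundary separately, since the boundary carries its own critical nonlinearity ($B_g u = b u^3$) and the chiral bag condition $\mathbb{B}^+\psi=0$ for the spinor. For the interior part one can essentially invoke the arguments of \cite{isobe,MV3}; the genuinely new input is the behavior near $\partial M$.

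\textbf{Step 1: $\varepsilon$-regularity near the boundary.} Fix a boundary point and work in Fermi coordinates, so that locally the system becomes a perturbation of the Euclidean half-space problem. The key is to show that if $\|\psi\|_{L^3(B_r^+)}$ and $\|u\|_{L^6(B_r^+)}$ (and the corresponding trace norms $\|u\|_{L^4(\partial B_r^+)}$) are small enough, then $u\in L^p_{loc}$ and $\psi\in L^q_{loc}$ for some $p>6$, $q>3$. I would do this by testing the equations against suitable cut-off powers/truncations: for the Dirac equation $D\psi = u^2\psi$ one uses that $D$ with chiral bag boundary conditions is invertible from $H^{1/2}_+$ to $H^{-1/2}$ (stated in the excerpt), together with the estimate $|u^2\psi|$ controlled in an appropriate dual space, to gain integrability on $\psi$; symmetrically, for $(L_g,B_g)$ one uses the elliptic estimate for the pair $(L_g,B_g)$ (elliptic with compact resolvent, per the excerpt) to gain integrability on $u$ from the right-hand sides $u|\psi|^2$ and the boundary term $bu^3$. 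Iterating these two gains against each other — a Moser/Brezis–Kato type iteration adapted to the boundary — upgrades $(u,\psi)$ to $(u,\psi)\in L^\infty_{loc}(M)\times L^\infty_{loc}(\Sigma M)$ near $\partial M$. The smallness hypothesis is not a restriction: by absolute continuity of the integrals, every point (interior or boundary) has a neighborhood on which the relevant critical norms are as small as we like, since $\psi\in L^3$, $u\in L^6$ globally.

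\textbf{Step 2: Bootstrap to $C^\infty$.} Once $u\in L^\infty_{loc}$ and $\psi\in L^\infty_{loc}$, the right-hand sides are in $L^\infty_{loc}$, so Schauder / $L^p$ elliptic estimates for $L_g$ with the oblique (Robin-type) boundary condition $B_g$ give $u\in W^{2,p}_{loc}$ for all $p$, hence $u\in C^{1,\alpha}_{loc}$; likewise the first-order elliptic estimates for $D_g$ under the chiral bag boundary condition (which is elliptic, as recalled in Section~\ref{sec:prop}) give $\psi\in W^{1,p}_{loc}$, hence $\psi\in C^{0,\alpha}_{loc}$. Then the nonlinearities $u|\psi|^2$, $u^2\psi$, $bu^3$ become $C^{0,\alpha}$, and one iterates: each round of elliptic regularity for $(L_g,B_g)$ and for $(D_g,\mathbb{B}^+)$ trades a bit of regularity on the data for more regularity on the solution, and since the coefficients of $L_g$, $D_g$ and of the boundary operators are smooth (the metric $g$ is smooth, $h_g$, $R_g$ smooth, $\G$ smooth), the iteration does not terminate and yields $(u,\psi)\in C^\infty(M)\times C^\infty(\Sigma M)$ up to the boundary.

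\textbf{Main obstacle.} The delicate point is Step~1 at the boundary: carrying out the truncation/iteration argument while simultaneously respecting the chiral bag condition $\mathbb{B}^+\psi=0$ and the nonlinear Robin condition $B_g u = b u^3$. One must be careful that the test functions (truncations of $u$, of $|\psi|$) remain admissible — i.e. their boundary traces are compatible with the boundary operators — so that no uncontrolled boundary terms appear upon integration by parts; in particular the sign of $b\ge 0$ and the structure of $\mathbb{B}^\pm = \tfrac12(\1\pm\nu\cdot\G)$ should be used to show the boundary contributions have a favorable sign or are absorbable. Handling the coupling (the gain on $\psi$ feeds the equation for $u$ and vice versa) requires organizing the iteration so both integrability exponents increase in tandem; this is the technical heart of the proof, but it is modeled closely on \cite{isobe,MV3} with the boundary modifications indicated above.
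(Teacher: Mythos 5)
Your overall architecture (localize, use smallness of the critical norms on small balls by absolute continuity, then a linear elliptic bootstrap once the exponents are supercritical) is the same as the paper's, and your Step~2 is essentially the paper's bootstrap. The genuine gap is in the mechanism you propose for Step~1. First, a Moser/Brezis--Kato iteration ``testing with truncations of $|\psi|$'' is not available for the spinorial component: $D_g$ is first order, there is no maximum principle, and pairing $D_g\psi=u^2\psi$ with truncated powers of $|\psi|$ does not produce a coercive quantity, so the truncation scheme that works for the scalar equation simply has no analogue for $\psi$. Second, the alternative gain you invoke for $\psi$ (invertibility of $D_g$ from $H^{1/2}_+$ to $H^{-1/2}$ plus control of $u^2\psi$ in a dual space) yields \emph{no} improvement at the critical level: with only $u\in L^6$, $\psi\in L^3$ one has $u^2\psi\in L^{3/2}\hookrightarrow H^{-1/2}$, which returns $\psi\in H^{1/2}\hookrightarrow L^3$, i.e.\ exactly where you started; the same criticality blocks the scalar--spinor alternation you describe, so ``iterating the two gains against each other'' stalls and never reaches $L^\infty_{loc}$. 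Smallness alone does not resolve this, because absorbing the critical term requires working in a stronger norm ($W^{1,p}$ with $p$ close to $3$, resp.\ $W^{2,q}$ with $q$ close to $3/2$) in which $\rho\psi$ and $\rho u$ are not a priori known to lie.

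This is precisely the point where the paper's proof of Theorem~\ref{thmreg} uses an idea absent from your sketch: for small supports the perturbed operators $(L_g-\eta|\psi|^2,\,B_g-b\eta u^2)$ and $D_g-\eta u^2$ are shown to be invertible on \emph{two} scales, namely $W^{2,q}(M)\to L^q\times W^{1-\frac1q,q}(\partial M)$ ($1<q<\tfrac32$) and $W^{1,p}(\Sigma M)\to L^p$ ($1<p<3$) on one hand, and $H^1\to H^{-1}\times H^{-\frac12}(\partial M)$, $H^{\frac12}_+\to H^{-\frac12}$ on the other; one then solves the localized equations (with right-hand sides built from $\nabla\rho$, $\Delta\rho$, $\partial\rho/\partial\nu$) in the stronger scale and identifies that solution with $\rho u$, $\rho\psi$ by uniqueness in the weaker scale, which is what actually upgrades $(u,\psi)$ to $W^{2,q}_{loc}\times W^{1,p}_{loc}$ and hence to $L^r$ for every finite $r$ before the bootstrap. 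Without this two-scale uniqueness/identification device (or an explicit substitute, e.g.\ first running a Brezis--Kato iteration on the scalar equation alone, with potential $|\psi|^2\in L^{3/2}(M)$ and boundary potential $bu^2\in L^2(\partial M)$, to make $u$ supercritical and only then bootstrapping the now subcritical Dirac equation linearly), your Step~1 does not go through as written. Minor additional points: $\varepsilon$-regularity here yields $L^p$ for all finite $p$, not $L^\infty_{loc}$ directly, and no sign condition on $b$ or favorable boundary sign is needed -- the paper only uses smallness of $\|u\|_{L^4(\partial M)}$ on small boundary balls.
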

\begin{proof}
First of all, we recall the Sobolev embeddings:
$$\left\{\begin{array}{lll}
H^1(M)\hookrightarrow L^p(M), \quad 1\leq p \leq 6 ,\\
H_{+}^{\frac{1}{2}}(\Sigma M) \hookrightarrow L^p(\Sigma M), \quad 1\leq p \leq 3,\\
H^1(M)\hookrightarrow L^q(\partial M), \quad 1\leq q \leq 4.
\end{array}
\right.$$
We also recall that the two operators $(L_{g},B_{g})$ and $(D_{g},\mathbb{B}^{+})$ are elliptic. Now let $\rho,\eta \in C^{\infty}(M)$, with $\eta=1$ on $supp(\rho)$ and let us denote $\Omega=supp(\eta)$. We compute
\begin{align}
L_g(\rho u)&=-\Delta_g(\rho u)+\frac{1}{8}R_g \rho u  \notag\\
&=-\rho\Delta_g u-u\Delta_g \rho-2g(\nabla\rho,\nabla u)+\frac{1}{8}R_g \rho u \notag\\
&=\rho L_g( u)-u\Delta_g \rho-2g(\nabla\rho,\nabla u) \notag\\
&=\eta|\psi|^2 \rho u -u\Delta_g \rho-2g(\nabla\rho,\nabla u) \notag ,
\end{align}
and
$$D_g(\rho \psi)=\rho D_g \psi+\nabla\rho\cdot\psi=\eta u^2 \rho\psi+\nabla\rho\cdot\psi.$$
If $supp(\rho)\cap \partial M\not =0$, we have
$$B_{g}(\rho u)=\rho B_{g}u+u\frac{\partial \rho}{\partial \nu}=b\eta \rho u^{3}+ u\frac{\partial \rho}{\partial \nu}.$$
and
$$\mathbb{B}^{+}(\rho \psi)=\rho \mathbb{B}^{+}\psi=0.$$

Notice that, since $u\in H^1(M)$,and $\psi \in L^3(\Sigma M),$ we have
$$u\Delta_g \rho+2g(\nabla\rho,\nabla u)\in L^2(M),\quad \nabla\rho\cdot\psi \in L^3(\Sigma M)\quad \text{ and } u\frac{\partial \rho}{\partial \nu}\in L^{4}(\partial M).$$

Now one can define the two perturbation maps:
$$\left\{\begin{array}{ll}
P_1:W^{2,q}(M)\longrightarrow L^{q}(M)\times W^{1-\frac{1}{q},q}(\partial M),\\
&\\
P_1(v)=(\eta|\psi|^{2}v, b\eta u^{2} v)
\end{array}\right. \text{ and }\quad \left\{\begin{array}{ll}
P_2:W^{1,p}_{+}(\Sigma M)\longrightarrow L^{p}(\Sigma M),\\
&\\
P_2(\phi)=\eta u^2 \phi.\end{array} \right.$$

In what follows we denote by $\Vert\cdot \Vert_{op}$ the operator norm, the norms of the spaces involved being clear from the context.

\begin{lemma}
The operators $P_{1}$ and $P_{2}$ are bounded for $1<q<\frac{3}{2}$ and $1<p<3$. Moreover,
$$\|P_1\|_{op}\leq C\Big(\|\psi\|^{2}_{L^{3}(\Sigma \tilde{\Omega})}+b\|u\|^{2}_{L^{6}(\tilde{\Omega})}+b\|u\|_{L^{6}(\tilde{\Omega})}+b^{2}\|u\|^{2}_{L^{4}(\tilde{\Omega}\cap \partial M)}\Big)$$
and
$$\|P_2\|_{op}\leq C_p \|u\|^2_{L^6(\Omega)},$$
where $\Omega \subset \tilde{\Omega}$.
\end{lemma}
\begin{proof}
Notice that the estimate for $\Vert P_2\Vert_{op}$ follows by a simple application of H\"older's inequalities. So we focus on $P_{1}$ and we start by the second component of $P_{1}$. Recall, from the Sobolev trace theorem, that
$$\|\eta u^{2}v\|_{W^{1-\frac{1}{q},q}(\partial M)}\leq C\|\eta u^{2}v\|_{W^{1,q}(M)}.$$
for all $q<\frac{3}{2}$.  Hence, one needs to show that $\eta u^{2} v \in W^{1,q}(M)$ for $1\leq q <\frac{3}{2}$ to insure that $P_{1}$ is well defined. Indeed, we have
$$\|\eta u^{2}v\|_{L^{q}(M)}\leq \|u\|_{L^{6}(\Omega)}^{2}\|v\|_{L^{\frac{3q}{3-q}}(M)}\leq C\|u\|_{L^{6}(\Omega)}^{2}\|v\|_{W^{2,q}(M)}.$$
On the other hand, we have
$$\nabla (\eta u^{2}v)=u^{2}v \nabla \eta +\eta u^{2} \nabla v+2\eta uv \nabla u.$$
Therefore, since
$$ \|\nabla\eta u^{2}v\|_{L^{q}(M)}\leq \|u\|_{L^{6}(\Omega)}^{2}\|v\|_{L^{\frac{3q}{3-q}}(M)}\leq C\|u\|_{L^{6}(\Omega)}^{2}\|v\|_{W^{2,q}(M)},$$
$$\|\eta u^{2}\nabla v\|_{L^{q}(M)}\leq \|u\|_{L^{6}(\Omega)}^{2}\|\nabla v\|_{L^{\frac{3q}{3-q}}(M)}\leq C\|u\|_{L^{6}(\Omega)}^{2}\|v\|_{W^{2,q}(M)}$$
and

\begin{align}
\|\eta u v \nabla u\|_{L^{q}(M)}&\leq \|u\|_{L^{6}(\Omega)}\|\nabla u\|_{L^{2}(\Omega)}\|v\|_{L^{\frac{3q}{3-2q}}(M)}\notag\\
&\leq C\|u\|_{L^{6}(\Omega)}\|\nabla u\|_{L^{2}(\Omega)}\|v\|_{W^{2,q}(M)}\notag\\
&\leq C(\|u\|^2_{L^{6}(\Omega)}+\|\nabla u\|^2_{L^{2}(\Omega)})\|v\|_{W^{2,q}(M)},
\end{align}
We see that $\eta u^{2}v \in W^{1,q}(M)$. Moreover, by H\"{o}lder's inequality, we have, for the first component of $P_{1}$,

$$\|\eta |\psi|^{2}v\|_{L^{q}(M)}\leq C\|\psi\|_{L^{3}(\Sigma \Omega)}^{2}\Vert v\Vert_{L^{\frac{3q}{3-2q}}(M)}\leq C\|\psi\|_{L^{3}(\Sigma \Omega)}^{2}\Vert v\Vert_{W^{2,q}(M)}.$$
Thus, $P_{1}$ is well defined and
$$\|P_{1}\|_{op}\leq C\Big(\|\psi\|_{L^{3}(\Sigma \Omega)}^{2}+b\|u\|_{L^{6}(\Omega)}^{2}+b\|\nabla u\|_{L^{2}(\Omega)}^{2}\Big).$$
To finish the proof, we just need to estimate $\|u\|_{H^{1}(\Omega)}$. So we use $\rho^{2}_{1} u$ as a test function in the first equation $(\ref{eq1})$, where $\rho_{1}\in C^{\infty}(M)$ is compactly supported and $\rho_{1}=1$ on $\Omega$. This leads to
$$\int_{M}\rho^{2}_1 u L_{g}u =\int_{M} \rho_1 u^{2}|\psi|^{2}.$$
Hence, integrating by parts leads to
$$\int_{M}u\nabla \rho^{2}_1\cdot \nabla u +\rho^{2}_1 |\nabla u|^{2} -b\int_{\partial M} \rho^{2}_1 u^{4}+\frac{1}{8}\int_{M}\rho^{2}_1 R u^{2}=\int_{M} \rho^{2}_1 u^{2}|\psi|^{2}.$$
Since $$\int_{M}u\nabla \rho^{2}_1\cdot \nabla u \leq 2\|u\nabla\rho_1\|_{L^{2}(M)}\|\rho_1\nabla u\|_{L^{2}(M)},$$
we have
$$\int_{M}\rho^{2}_1 |\nabla u|^{2}\leq C(\|u\|_{L^{6}(\Omega)}^{2}\|\psi\|_{L^{3}(\Omega)}^{2}+\|u\|_{L^{6}(\Omega)}^{2}+b\|u\|_{L^{4}(\partial \Omega\cap \partial M)}^{4})\,.$$
Thus, if $\Omega \subset \tilde{\Omega}=supp(\rho_{1})$ we have
$$\|u\|_{H^{1}(\Omega)}\leq C(\|u\|_{L^{6}(\tilde{\Omega})}\|\psi\|_{L^{3}(\Sigma\tilde{\Omega})}+\|u\|_{L^{6}(\tilde{\Omega})}+b\|u\|_{L^{4}(\partial \tilde{\Omega}\cap \partial M)}^{2}).$$
The set $\tilde{\Omega}$ should be thought of as a small dilation of $\Omega$ that shrinks when $\Omega$ itself is shrunk. To summarize, we have
$$\|P_{1}\|_{op}\leq C\Big(\|\psi\|^{2}_{L^{3}(\Sigma \tilde{\Omega})}+b\|u\|^{2}_{L^{6}(\tilde{\Omega})}+b\|u\|_{L^{6}(\tilde{\Omega})}+b^{2}\|u\|^{2}_{L^{4}(\tilde{\Omega}\cap \partial M)}\Big).$$

\end{proof}

In this way the operators
$$(L_g-\eta|\psi|^2, B_{g}-b\eta u^{2}):W^{2,q}(M)\longrightarrow L^{q}(M)\times L^{r}(\partial M), \qquad 1<q<\frac{3}{2} , 1<r<2,$$
$$D_g-\eta u^2:W^{1,p}(\Sigma M)\longrightarrow L^{p}(\Sigma M), \qquad 1<p<3 ,$$
are invertible if $\|\psi\|_{L^3(\Sigma \tilde{\Omega})}$, $\|u\|_{L^{4}(\tilde{\Omega} \cap \partial M)}$ and $\|u\|_{L^6(\tilde{\Omega})}$ are small, which is possible by taking $\tilde{\Omega}$ even smaller. Therefore there is a unique solution $(v,\phi)$ with $v\in W^{2,q}(M)$ and $\phi\in W^{1,p}(\Sigma M)$ to the equations
$$\left\{\begin{array}{ll}
L_g v-\eta|\psi|^2v & = -u\Delta_g \rho-2g(\nabla\rho,\nabla u) \notag ,\\
B_{g}v-\eta u^{2}v &= u\frac{\partial \rho}{\partial \nu},\notag\\
D_g \phi-\eta u^2 \phi & =\nabla\rho\cdot\psi \notag, \\
\mathbb{B}^{+}\phi &=0\notag
\end{array}
\right.
$$
for $1<q<\frac{3}{2}$ and $1<p<3$.\\
Now we consider another pair of maps, defined as follows.
$$\left\{ \begin{array}{ll}
\tilde P_1:H^{1}(M)\longrightarrow H^{-1}(M)\times H^{-\frac{1}{2}}(\partial M),\\
&\\
 \tilde P_1(\tilde v)=(\eta|\psi|^2 \tilde v, b\eta u^{2}\tilde v)
\end{array} \right. \text{ and }\quad \left\{\begin{array}{ll}
\tilde P_2:H^{\frac{1}{2}}_{+}(\Sigma M)\longrightarrow H^{-\frac{1}{2}}(\Sigma M),\\
&\\
\tilde P_2(\tilde \phi)=\eta u^2 \tilde \phi.
\end{array}
\right.$$
Again, by H\"{o}lder's inequality and Sobolev embedding $L^{\frac{3}{2}}(\Sigma M)\hookrightarrow H^{-\frac{1}{2}}(M)$, it is clear that $\tilde{P}_{2}$ is well defined and $$\|\tilde{P}_2\|_{op}\leq C \|u\|^2_{L^6(\Omega)}.$$
A more detailed argument is needed for $\tilde{P}_{1}$. Indeed, we recall that we have the continuous injections $L^{\frac{6}{5}}(M)\hookrightarrow H^{-1}(M)$ and $L^{\frac{4}{3}}(\partial M)\hookrightarrow H^{-\frac{1}{2}}(\partial M)$. Therefore there holds
\begin{align}
\|\eta |\psi|^{2}\tilde{v}\|_{H^{-1}(M)}&\leq C \|\eta |\psi|^{2}\tilde{v}\|_{L^{\frac{6}{5}}(M)}\notag\\
&\leq C\|\psi\|_{L^{3}(\Omega)}^{2}\|\tilde{v}\|_{L^{6}(M)}\notag\\
&\leq C\|\psi\|_{L^{3}(\Omega)}^{2}\|\tilde{v}\|_{H^{1}(M)}.
\end{align}
For the second term of $\tilde{P}_{1}$ we have
\begin{align}
\|\eta u^{2} \tilde{v}\|_{H^{-\frac{1}{2}}(\partial M)}&\leq C \|\eta u^{2} \tilde{v}\|_{L^{\frac{4}{3}}(\partial M)}\notag\\
&\leq C\|u\|_{L^{4}(\Omega \cap \partial M)}^{2}\|\tilde{v}\|_{L^{4}(\partial M)}\notag\\
&\leq  C\|u\|_{L^{4}(\Omega \cap \partial M)}^{2}\|\tilde{v}\|_{H^{1}(M)}.
\end{align}
Hence, $\tilde{P}_{1}$ is well defined and
$$\|\tilde P_1\|_{op}\leq C \Big( \|\psi\|^2_{L^3(\Sigma \Omega)}+b\|u\|_{L^{4}(\Omega \cap \partial M)}^{2}\Big).$$
As before, the operators
$$(L_g-\eta|\psi|^2, B_{g}-\eta u^{2}):H^{1}(M) \longrightarrow H^{-1}(M)\times H^{-\frac{1}{2}}(\partial M) ,$$
$$D_g-\eta u^2:H^{\frac{1}{2}}(\Sigma M)\longrightarrow H^{-\frac{1}{2}}(\Sigma M) ,$$
are invertible if $\|\psi\|_{L^3(\Sigma \Omega)}$, $\|u\|_{L^6(\Omega)}$ and $\|u\|_{L^{4}(\Omega \cap \partial M)}$ are small; therefore there are unique solutions $\tilde v\in H^{1}(M)$ and $\tilde \phi\in H^{\frac{1}{2}}(\Sigma M)$ to the equations
$$\left\{\begin{array}{ll}
L_g \tilde v-\eta|\psi|^2\tilde v =-u\Delta_g \rho-2g(\nabla\rho,\nabla u) \notag ,\\
B_{g}(\tilde v)-\eta b u^{2} \tilde v= u\frac{\partial \rho}{\partial \nu}\notag\\
D_g \tilde \phi-\eta u^2 \tilde \phi =\nabla\rho\cdot\psi  \notag\\
\mathbb{B}^{+}(\tilde \phi)=0 \notag
\end{array}
\right.$$

Moreover, since
$$ W^{2,q}(M) \hookrightarrow H^{1}(M), \quad \frac{6}{5}\leq q <\frac{3}{2} ,\text{ and } W^{1,p}_{+}(\Sigma M) \hookrightarrow H^{\frac{1}{2}}_{+}(\Sigma M), \quad \frac{3}{2}\leq p < 3 ,$$
one has, using the uniqueness, $\tilde v=v=\rho u$ and $\tilde \phi=\phi=\rho \psi$, under the above conditions on $q$ and $p$. Now, since $\rho$ and $\eta$ are smooth functions with arbitrary small supports, we have that $u\in  W^{2,q}(M)$ and $\psi \in W^{1,p}_{+}(\Sigma M)$, provided $\frac{6}{5}\leq q <\frac{3}{2}$ and $\frac{3}{2}\leq p < 3$. Therefore, by the Sobolev embedding, we get that $u\in L^q(M)$ and $\psi \in L^p(\Sigma M)$, for any $1< q,p <\infty$; and then by plugging them in the initial equations, we have that $u\in  W^{2,q}(M)$ and $\psi \in W^{1,p}_{+}(\Sigma M)$, for any $1< q,p <\infty$, by the elliptic regularity estimates ( see \cite[Theorem~15.2]{ADN} for the scalar part and \cite[Theorem~4.1]{CJSZ} for the spinorial part). Once more, by the Sobolev embedding for the H\"{o}lder spaces, we have that there exist $0<\alpha,\beta<1$ such that $u\in C^{0,\alpha}(M)$ and $\psi \in C^{0,\beta}(\Sigma M)$; finally by the elliptic regularity again and bootstrapping, we get $u\in C^{\infty}(M)$ and $\psi \in C^{\infty}(\Sigma M)$.
\end{proof}

\section{Bubbling of PS sequences}\label{sec:PS}
This section is devoted to the proof of Theorem \ref{first}, which requires various intermediate results. We start by showing that Palais-Smale sequences are bounded and then look at the behaviour near concentration points. This allows us to distinguish the different blowup behaviours and bubbling profiles, leading to the energy quantization.
\smallskip

We will be using notations and properties that we introduced in Section \ref{sec:spinorspaces}. In particular, since in the statement of Theorem \ref{first} we assume the positivity of the Yamabe invariant of the manifold, there will be no harmonic spinors.

\begin{proposition}\label{propPS}
Every (PS) sequence for $E$ is bounded.
\end{proposition}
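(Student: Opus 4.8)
The plan follows the standard scheme for a functional that is strongly indefinite in the spinor variable but coercive in the scalar one: estimate the norms $a_n:=\|u_n\|$ and $b_n:=\|\psi_n\|$ against each other by testing the differential $DE^{b}(u_n,\psi_n)$ (which tends to $0$) along well-chosen directions, and then close the resulting system of inequalities. Since $Y(M,\partial M,[g])>0$ there are no harmonic spinors, so $H^{\frac{1}{2}}_{+}(\Sigma M)=H^{\frac{1}{2},-}_{+}\oplus H^{\frac{1}{2},+}_{+}$; each $\psi_n$ splits as $\psi_n=\psi_n^{+}+\psi_n^{-}$ with $\|\psi_n^{+}-\psi_n^{-}\|=\|\psi_n\|$ and $\int_M\langle D_g\psi_n,\psi_n^{+}-\psi_n^{-}\rangle\,dv_g=\|\psi_n\|^2$. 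I also set $t_n^2:=\int_M u_n^2|\psi_n|^2\,dv_g$ and $\beta_n:=\int_{\partial M}u_n^4\,d\sigma_g\ge0$, and I write $o(a_n)$, $o(b_n)$ for quantities bounded by $\varepsilon_n a_n$, $\varepsilon_n b_n$ with $\varepsilon_n\to0$.

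First I would record the derivatives $\partial_uE^{b}(u,\psi)[\phi]=\int_M\langle\nabla u,\nabla\phi\rangle+\frac{1}{8}R_gu\phi-u\phi|\psi|^2\,dv_g-b\int_{\partial M}u^3\phi\,d\sigma_g$ and $\partial_\psi E^{b}(u,\psi)[\varphi]=\int_M\langle D_g\psi,\varphi\rangle-u^2\langle\psi,\varphi\rangle\,dv_g$, and extract two identities from the Palais--Smale hypothesis. Subtracting $\partial_\psi E^{b}(u_n,\psi_n)[\psi_n]=o(b_n)$ from $2E^{b}(u_n,\psi_n)=2c+o(1)$ cancels the Dirac and coupling terms and gives $a_n^2-\frac{b}{2}\beta_n=2c+o(1)+o(b_n)$; testing $\partial_uE^{b}(u_n,\psi_n)$ with $u_n$ gives $t_n^2+b\beta_n=a_n^2+o(a_n)$. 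From the second identity, using $b\beta_n\ge0$, I obtain both $t_n\le C(1+a_n)$ and $\frac{b}{2}\beta_n\le\frac12 a_n^2+o(a_n)$; inserting the latter into the first identity yields $\frac12 a_n^2\le 2c+o(1)+o(a_n)+o(b_n)$.

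Next I would bound $b_n$ in terms of $a_n$ by testing $\partial_\psi E^{b}(u_n,\psi_n)$ along the admissible direction $\psi_n^{+}-\psi_n^{-}\in H^{\frac{1}{2}}_{+}(\Sigma M)$, which gives $b_n^2=\int_M u_n^2\langle\psi_n,\psi_n^{+}-\psi_n^{-}\rangle\,dv_g+o(b_n)$. Estimating the right-hand side by Cauchy--Schwarz together with Hölder and the Sobolev embeddings $H^1(M)\hookrightarrow L^6(M)$, $H^{\frac{1}{2}}_{+}(\Sigma M)\hookrightarrow L^3(\Sigma M)$ gives $\bigl|\int_M u_n^2\langle\psi_n,\psi_n^{+}-\psi_n^{-}\rangle\,dv_g\bigr|\le C\,t_n a_n b_n$, hence $b_n\le C t_n a_n+o(1)\le C'(1+a_n^2)$, using $t_n\le C(1+a_n)$. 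Feeding $b_n\lesssim 1+a_n^2$ back into $\frac12 a_n^2\le 2c+o(1)+o(a_n)+o(b_n)$ and absorbing, one obtains $(\frac12-\eta_n)a_n^2\le 2c+1$ for $n$ large with $\eta_n\to0$, so $\{a_n\}=\{\|u_n\|\}$ is bounded, and then $\{b_n\}=\{\|\psi_n\|\}$ is bounded as well; this proves the claim.

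The step I expect to be the crux is the identity $t_n^2+b\beta_n=a_n^2+o(a_n)$: it is what lets me bound $t_n$ by $a_n$ rather than by the useless $t_n\lesssim a_n b_n$ coming from a naive Hölder estimate on the coupling term, and hence break the circularity in the spinor estimate. Note that it is precisely the favourable sign of the critical boundary term $b\beta_n$ that makes this work, so the scheme applies verbatim for every $b\ge0$, in particular for $E=E^{0}$.
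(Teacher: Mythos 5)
Your proof is correct and follows essentially the same scheme as the paper's: test $dE^{b}$ against $u_n$ and against the spectral components of $\psi_n$, combine with the energy level to control the coupling term $\int_M u_n^2|\psi_n|^2$ and the boundary term, and close via H\"older and the embeddings $H^{1}(M)\hookrightarrow L^{6}(M)$, $H^{\frac{1}{2}}_{+}(\Sigma M)\hookrightarrow L^{3}(\Sigma M)$. The only differences are cosmetic: you use the combination $2E^{b}-\partial_\psi E^{b}[\psi_n]$ and the test spinor $\psi_n^{+}-\psi_n^{-}$, then close the system by absorption, whereas the paper uses $2E^{b}-\langle dE^{b},z_n\rangle$ (which bounds $\int_M u_n^2|\psi_n|^2+\frac{b}{2}\int_{\partial M}u_n^4$ directly) and tests with $\psi_n^{\pm}$ separately.
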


\begin{proof}
Let $(u_{n},\psi_{n})_{n\in \N} \in H^{1}(M)\times H_{+}^{\frac{1}{2}}(\Sigma M)=:{\mathcal H}$ be a (PS) sequence for $E^{b}$, that is
$$E^{b}(u_{n},\psi_{n})\to c, \qquad dE^{b}(u_{n},\psi_{n})\to 0,  \text{ in }  H^{-1}(M)\times H^{-\frac{1}{2}}(\partial M)\times H^{-\frac{1}{2}}_{+}(\Sigma M) .$$
Therefore, there exists a sequence $(\varepsilon_n, r_{n}, \delta_n) \in  H^{-1}(M)\times H^{-\frac{1}{2}}(\partial M)\times H_{+}^{-\frac{1}{2}}(\Sigma M)$ such that

\begin{equation}\label{equ}
L_{g} u_{n}=|\psi_{n}|^{2}u_{n}+\varepsilon_{n} ,
\end{equation}
\begin{equation}\label{equb}
\frac{\partial}{\partial \nu} u_{n}=bu^{3}_{n}+r_{n} ,
\end{equation}

\begin{equation}\label{eqp}
D_g\psi_{n}=|u_{n}|^{2}\psi_{n}+\delta_{n} ,
\end{equation}

with
$$\varepsilon_{n}\to 0,  \text{ in } H^{-1}(M) \quad , \quad  \delta_{n}\to 0,  \text{ in } H^{-\frac{1}{2}}(M) \quad \text{ and } \quad r_{n}\to 0 \in H^{-\frac{1}{2}}(\partial M).$$
We let $z_{n}=(u_{n},\psi_{n}) \in \mathcal{H}$, then
$$\langle dE^{b}(z_{n}),z_{n}\rangle =\int_{M}|\nabla u_{n}|^{2}+\frac{R}{8}u^{2} +\int_{M}\langle D\psi_{n},\psi_{n}\rangle -2\int_{M}u_{n}^{2}|\psi_{n}|^{2} -b\int_{\partial M}u_{n}^{4}.$$
$$2E^{b}(z_{n})-\langle dE^{b}(z_{n}),z_{n} \rangle = \int_{M}|u_{n}|^{2}|\psi_{n}|^{2}\ dv_{g}+\frac{b}{2}\int_{\partial M}u_{n}^{4}\ d\sigma_{g} .$$
Hence
\begin{equation}\label{lev}
\frac{b}{2}\int_{\partial M}u_{n}^{4}\ dv_{g}+\int_{M}|u_{n}|^{2}|\psi_{n}|^{2}\ dv_{g}=2c+o(\|z_{n}\|).
\end{equation}
Multiplying \eqref{equ} by $u_{n}$ and integrating we have (combining with multiplying \eqref{equb} by $u_{n}$ and integrating)
$$\int_{M}|\nabla u_{n}|^{2} -\int_{\partial_{M}}bu_{n}^{4}+\int_{M}\frac{R}{8}u_{n}^{2}=\int_{M}u_{n}^{2}|\psi_{n}|^{2}+o(\|u_{n}\|).$$
Thus,

$$\|u_{n}\|^{2}=\int_{M}|u_{n}|^{2}|\psi_{n}|^{2}\ dv_{g}+\int_{\partial_{M}}bu_{n}^{4}\ d\sigma_{g}+o(\|u_{n}\|),$$
hence
\begin{equation}
\|u_{n}\|^{2}=2c+o(\|z_{n}\|) .
\end{equation}
Now multiplying \eqref{eqp} by $\psi_{n}^{+}=P^{+}(\psi_n)$, we find
\begin{align}
\|\psi_{n}^{+}\|^{2}&\leq C\int_{M}|u_{n}|^{2}|\psi_{n}||\psi_{n}^{+}|\ dv_{g} +o(\|\psi_{n}^{+}\|)\notag \\
&\leq C\left(\int_{M}|u_{n}|^{2}|\psi_{n}|^{2}\ dv_{g}\right)^{\frac{1}{2}} \left(\int_{M}|u_{n}|^{2}|\psi^{+}_{n}|^{2}\ dv_{g}\right)^{\frac{1}{2}}+o(\|\psi^{+}_{n}\|)\notag \\
&\leq C\Big(2c+o(\|z_{n}\|)\Big)^{\frac{1}{2}}\|u_{n}\|_{L^{6}}\|\psi_{n}^{+}\|_{L^{3}}+o(\|\psi_{n}\|)\notag \\
&\leq C\Big(2c+o(\|z_{n}\|)\Big)\|\psi_{n}\| +o(\|\psi_{n}\|)\notag.
\end{align}
Similarly, we have for $\psi_{n}^{-}=P^{-}(\psi_n)$ that
$$\|\psi_{n}^{-}\|^{2}\leq C\big(2c+o(\|z_{n}\|)\big)\|\psi_{n}\| +o(\|\psi_{n}\|).$$
Hence,
$$\|\psi_{n}\|\leq C\big(2c+o(\|z_{n}\|)\big)+o(1),$$
so that
$$\|z_{n}\|\leq C+o(\|z_{n}\|)$$
and $\|z_{n}\|$ is bounded.
\end{proof}

\noindent
As a corollary from the previous proposition, we have that up to a subsequence, $z_{n}\rightharpoonup z_{\infty}=(u_{\infty},\psi_{\infty})$ weakly in $\mathcal{H}$, also $u_{n}\to u_{\infty}$ in $L^{p}(M)$ for $p<6$ and $\psi_{n}\to\psi$ in $L^{q}(M)$ for $q<3$ and $u_{n}\to u_{\infty}\in L^{r}(\partial M)$ for $r<4$.
We claim that $z_{\infty}$ is a week solution to \eqref{eq1}. Indeed, let $z_0=(u_{0},\psi_{0})\in \mathcal{H}$. Since $(z_{n})$ is a (PS) sequence for $E^{b}$, we have
$$\int_{M}u_{0} L_{g}u_{n}\  dv_{g}=\int_{M}|\psi_{n}|^{2}u_{n}u_{0}\ dv_{g} +o(1),$$
but $|\psi_{n}|^{2}\in L^{\frac{3}{2}}(M)$ and $u_{n}\in L^{6}(M)$, thus $u_{n}|\psi_{n}|^{2}$ converges weakly to $u_{\infty}|\psi_{\infty}|^{2}$ in $L^{\frac{6}{5}}$, hence
$$\int_{M}|\psi_{n}|^{2}u_{n}u_{0}\ dv_{g}\to \int_{M}|\psi_{\infty}|^{2}u_{\infty}u_{0}\ dv_{g} .$$
Also, by the weak convergence we have that
$$\int_{M}  u_{0}L_{g}u_{n}\  dv_{g}\to \int_{M} u_{0}L_{g}u_{\infty}\  dv_{g} .$$
Therefore,
$$L_g u_{\infty}=|\psi_{\infty}|^{2}u_{\infty},$$
and similarly it holds
$$D_{g}\psi_{\infty}=|u_{\infty}|^{2}\psi_{\infty},$$
and
$$\frac{\partial u_{\infty}}{\partial \nu}=bu_{\infty}^{3}.$$

For now, We let $v_{n}=u_{n}-u_{\infty}$ and $\phi_{n}=\psi_{n}-\psi_{\infty}$, then we have the following
\begin{lemma}\label{lemsplit}
Let $h_{n}=(v_{n},\phi_{n})$, then
$$E^{b}(h_{n})=E^{b}(z_{n})-E^{b}(z_{\infty})+o(1)$$
and
$$dE^{b}(h_{n})\to 0, \text{ in } H^{-1}(M)\times H^{-\frac{1}{2}}(\partial M)\times H^{-\frac{1}{2}}_{+}(\Sigma M) .$$
\end{lemma}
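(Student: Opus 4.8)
The plan is to establish the two claims of Lemma~\ref{lemsplit} by the now-standard Brezis--Lieb type splitting, exploiting the weak convergence $z_n\rightharpoonup z_\infty$ in $\mathcal H$ together with the strong convergence of $u_n$ in $L^p(M)$ for $p<6$, of $\psi_n$ in $L^q(\Sigma M)$ for $q<3$, and of the traces $u_n$ in $L^r(\partial M)$ for $r<4$, all of which are available from Proposition~\ref{propPS} and the compactness of the subcritical embeddings. For the energy identity, I would expand $E^b(z_n)=E^b(z_\infty+h_n)$ term by term. The quadratic terms $\|u_n\|^2=\|u_\infty\|^2+\|v_n\|^2+2\langle u_\infty,v_n\rangle$ and $\int_M\langle D\psi_n,\psi_n\rangle$ split exactly modulo the cross terms $\langle u_\infty,v_n\rangle$, $\langle D\psi_\infty,\phi_n\rangle_{L^2}$, which vanish in the limit by weak convergence (after noting that $D\psi_\infty\in L^2$ by the regularity Theorem~\ref{thmreg} applied to the limit equation, or simply by definition of weak convergence in $H^{1/2}_+$ paired with a fixed element). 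The boundary term $\int_{\partial M}u_n^4\,d\sigma_g\to\int_{\partial M}u_\infty^4\,d\sigma_g$ strongly since $r=4$ is subcritical on a $2$-dimensional boundary for $H^1(M)$-traces, so $\int_{\partial M}v_n^4\to0$ and this term contributes only $E^b$-part of $z_\infty$ with $o(1)$. The genuinely critical term is $\int_M u_n^2|\psi_n|^2\,dv_g$: here I would write $u_n^2|\psi_n|^2-u_\infty^2|\psi_\infty|^2-v_n^2|\phi_n|^2$ and show its integral is $o(1)$ by combining the pointwise a.e.\ convergence (subsequence), the Brezis--Lieb lemma applied separately to $u_n^2\to u_\infty^2$ in $L^3$ — wait, $u_n^2$ is only bounded in $L^3$, not convergent — so instead I would use that $u_n\to u_\infty$ strongly in $L^4(M)$ (subcritical), hence $u_n^2\to u_\infty^2$ strongly in $L^2(M)$, while $|\psi_n|^2\rightharpoonup|\psi_\infty|^2+|\phi_n|^2$-type decomposition is handled by the elementary inequality $\bigl||\psi_n|^2-|\phi_n|^2-|\psi_\infty|^2\bigr|\le C(|\psi_\infty||\phi_n|+|\psi_\infty|^2)$ together with $\psi_n\to\psi_\infty$ in $L^2$; pairing a strongly-$L^2$-convergent factor with an $L^3$-bounded one gives the needed $o(1)$ after a Hölder estimate. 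This reduces the cross terms to expressions like $\int_M u_\infty^2|\psi_\infty||\phi_n|$ and $\int_M u_\infty|v_n||\psi_\infty|^2$ which go to zero by strong subcritical convergence.

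For the second claim, that $dE^b(h_n)\to0$ in the appropriate dual, I would test $dE^b(h_n)$ against an arbitrary $(\varphi,\xi)\in\mathcal H$ with $\|(\varphi,\xi)\|\le1$ and expand. Using $dE^b(z_n)\to0$ and the limit equations satisfied by $z_\infty$, the difference $dE^b(h_n)[(\varphi,\xi)]$ reduces to combinations of the nonlinear terms evaluated at $z_n$ minus those at $z_\infty$ minus those at $h_n$ — precisely the same trilinear/quartic discrepancies as above, now multiplied by a unit test element. The key algebraic identities to exploit are, schematically, $u_n|\psi_n|^2-u_\infty|\psi_\infty|^2-v_n|\phi_n|^2=u_\infty(|\psi_n|^2-|\phi_n|^2)+v_n(|\psi_n|^2-|\phi_n|^2-|\phi_n|^2)\cdots$ — more carefully one writes everything in terms of $v_n,\phi_n,u_\infty,\psi_\infty$ and checks that every surviving monomial contains at least one factor that converges strongly to zero in a subcritical Lebesgue space, so that Hölder plus the uniform bounds yield a bound of the form $o(1)\|(\varphi,\xi)\|$ uniform in the test element. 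The boundary nonlinearity $bu^3$ is handled identically using $u_n\to u_\infty$ in $L^4(\partial M)$ strongly: $u_n^3-u_\infty^3-v_n^3$ is a sum of monomials each with a strongly convergent factor, paired against $\xi\in L^4(\partial M)$.

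The main obstacle is the critical term $\int_M u_n^2|\psi_n|^2$ and its derivative: because $u_n^2|\psi_n|^2$ sits exactly at the borderline $L^1$ with both factors at their critical integrability, one cannot split it by a single application of Brezis--Lieb, and one must be careful to peel off one factor at a time, using that \emph{one} of the two factors ($u_n$, via the subcritical $L^4(M)$ embedding) does converge strongly while the other only converges weakly in its critical space. Concretely, the identity $u_n^2|\psi_n|^2=u_\infty^2|\psi_n|^2+(u_n^2-u_\infty^2)|\psi_n|^2$ isolates a term with a strongly convergent coefficient $u_n^2-u_\infty^2\to0$ in $L^2$ times the $L^3$-bounded $|\psi_n|^2$, while $u_\infty^2|\psi_n|^2$ can be further split via Brezis--Lieb in the $\psi$-variable (now legitimate since $u_\infty^2$ is fixed). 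I expect that, apart from this careful bookkeeping, the whole lemma is routine, and I would present the $\psi$-splitting and the $u$-splitting as two short displayed computations followed by a Hölder estimate, relegating the elementary pointwise inequalities to a line each.
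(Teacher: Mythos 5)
Your overall strategy (expand $E^{b}(z_\infty+h_n)$, use that $z_\infty$ is a critical point, and kill the mixed terms via the regularity of $(u_\infty,\psi_\infty)$ and the compact subcritical embeddings) is the same as the paper's, but two of your concrete steps are wrong, and the first one breaks the proof of the energy identity. In your treatment of the coupling term you claim that $\int_M(u_n^2-u_\infty^2)|\psi_n|^2\,dv_g=o(1)$ because $u_n^2-u_\infty^2\to0$ in $L^2$ while $|\psi_n|^2$ is ``$L^3$-bounded''. But $\psi_n$ is only bounded in $L^3(\Sigma M)$, so $|\psi_n|^2$ is bounded in $L^{3/2}$, and no H\"older pairing of an $L^2$-convergent factor with an $L^{3/2}$-bounded one yields $o(1)$ (you would need $u_n\to u_\infty$ in $L^6(M)$, i.e. strong convergence at the critical exponent, which is exactly what fails). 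More importantly, the conclusion itself is false: expanding, $(u_n^2-u_\infty^2)|\psi_n|^2$ contains $v_n^2|\phi_n|^2$, whose integral converges to the bubble coupling energy and does not vanish when concentration occurs --- the whole point of the lemma is that $E^{b}(h_n)$ retains this part. The correct bookkeeping (which is what the paper does) keeps $v_n^2|\phi_n|^2$ on the $E^{b}(h_n)$ side, absorbs the terms linear in $h_n$ into $\langle dE^{b}(z_\infty),h_n\rangle=0$, and shows that only the genuinely mixed monomials vanish: each of them contains at least one factor $u_\infty$ or $\psi_\infty$, bounded in $L^\infty$ by Theorem \ref{thmreg}, together with at least one factor $v_n$ or $\phi_n$, so that subcritical H\"older estimates such as $\int_M v_n u_\infty|\phi_n|^2\,dv_g\leq \|u_\infty\|_\infty\|v_n\|_{L^5}\|\phi_n\|_{L^{5/2}}^2\to0$ apply.

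The second error concerns the boundary: you assert that $u_n\to u_\infty$ strongly in $L^4(\partial M)$, hence $\int_{\partial M}v_n^4\,d\sigma_g\to0$, ``since $r=4$ is subcritical''. It is not: for $H^1$ on a three-manifold the trace embedding into $L^4(\partial M)$ is exactly critical and is not compact (the paper has compactness only for $q<4$), and $\int_{\partial M}v_n^4$ is precisely the quantity responsible for pure boundary blow-up (the set $\tilde{\Sigma}_4$ in Section \ref{sec:PS}), so it must be kept inside $E^{b}(h_n)$ rather than discarded. What actually vanishes are the mixed boundary terms $\int_{\partial M}\bigl(4v_n^3u_\infty+6v_n^2u_\infty^2+4v_nu_\infty^3\bigr)\,d\sigma_g$, using $v_n\to0$ in $L^p(\partial M)$ for $p<4$ and the boundedness of $u_\infty$. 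The same remark repairs your argument for the boundary part of $dE^{b}(h_n)$: the correction monomials $3u_\infty v_n^2$ and $3u_\infty^2 v_n$ tend to zero in $L^{4/3}(\partial M)\subset H^{-\frac{1}{2}}(\partial M)$ using only $v_n\to 0$ in $L^{8/3}(\partial M)$, with no need for (false) strong $L^4(\partial M)$ convergence. The derivative part of your plan is otherwise workable and coincides with the paper's argument.
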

\begin{proof}
\begin{align}
2E^{b}(z_{n})&=\int_{M}|\nabla(v_{n}+u_{\infty})|^{2}+\frac{R}{8}(v_{n}+u_{\infty})^{2}\ dv_{g} +\int_{M}\langle D_g(\phi_{n}+\psi_{\infty}),\phi_{n}+\psi_{\infty}\rangle\ dv_{g}\notag\\
&\quad -\int_{M}| v_{n}+u_{\infty}|^{2}|\phi_{n}+\psi_{\infty}|^{2}\ dv_{g}-\frac{b}{2}\int_{\partial M}(v_{n}+u_{\infty})^{4} \ d\sigma_{g}\notag\\
&=2E^{b}(h_{n})+2E^{b}(z_{\infty})+2\langle dE^{b}(z_{\infty}),h_{n}\rangle -\int_{M}|v_{n}|^{2}|\psi_{\infty}|^{2}+2|v_{n}|^{2}\langle \phi_{n},\psi_{\infty}\rangle\notag\\
&\quad +|u_{\infty}|^{2}|\phi_{n}|^{2}+2|\phi_{n}|^{2}v_{n}u_{\infty} +4v_{n}u_{\infty}\langle \psi_{\infty}, \phi_{n}\rangle dv_{g} \notag \\
&\quad -\frac{b}{2}\int_{\partial M} 4 v_{n}^{3}u_{\infty}+6v_{n}^{2}u_{\infty}^{2}\ d\sigma_{g}\notag
\end{align}
We first notice that, since $dE^{b}(z_{\infty})=0$, one can focus on the remaining terms. By the regularity result in Theorem \ref{thmreg}, we have that $u_{\infty}\in C^{2,\alpha}(M)$ and $\psi_{\infty}\in C^{1,\beta}(\Sigma M)$. Since $v_{n}\to 0$ strongly in $L^{2}(M)$ and $\phi_{n}\to 0$ in $L^{2}(\Sigma M)$, we have that
$$ -\int_{M}|v_{n}|^{2}|\psi_{\infty}|^{2}+2|v_{n}|^{2}\langle \phi_{n},\psi_{\infty}\rangle+|v_{\infty}|^{2}|\phi_{n}|^{2}+2|\psi_{\infty}|^{2}u_{\infty}v_{n}-4v_{n}u_{\infty}\langle \psi_{\infty}, \phi_{n}\rangle\ dv_{g} \to 0 .$$
The term $\int_{M}2|\phi_{n}|^{2}v_{n}u_{\infty}dv_{g}$ also converges to zero, as we have that $\phi_{n}\to 0$ in $L^{\frac{5}{2}}(\Sigma M)$ and $v_{n}\to 0$ in $L^{5}(M)$. In order to conclude with the energy splitting, we need to deal with boundary terms. Observe that $v_{n}\to 0$ in $L^{p}(\partial M)$ for all $p<4$. Hence,
$$\int_{\partial M} 4 v_{n}^{3}u_{\infty}+6v_{n}^{2}u_{\infty}^{2}\ d\sigma_{g}\to 0.$$
Therefore, we conclude that
$$E^{b}(z_{n})=E^{b}(h_{n})+E^{b}(z_{\infty})+o(1),$$
thus proving the energy splitting. \\
For the gradient part $dE^{b}$, we denote by $d_{u}E^{b}$ and $d_{\psi}E^{b}$ the scalar and the spinorial components respectively. We have then,
\begin{align}
d_{u}E^{b}(h_{n})& =d_{u}E^{b}(u_{n},\psi_{n})- d_{u}E^{b}(u_{\infty},\psi_{\infty})\notag\\
&+\Big(|\psi_{n}|^{2}u_{\infty}-|\psi_{\infty}|^{2}u_{n}+2\langle \psi_{n},\psi_{\infty}\rangle v_{n}, -3(u_{n}^{2}u_{\infty}-u_{n}u_{\infty}^{2})|_{\partial M}\Big) .\notag
\end{align}
But again, $d_{u}E^{b}(u_{\infty},\psi_{\infty})=0$ and since $\psi_{n}\to \psi_{\infty}$ in $L^{\frac{12}{5}}(\Sigma M)$ and $u_{n}\to u_{\infty}$ in $L^{\frac{6}{5}}(M)$ we have that
$$|\psi_{n}|^{2}u_{\infty}-|\psi_{\infty}|^{2}u_{n}\to 0$$
in $L^{\frac{6}{5}}(M)$ hence in $H^{-1}(M)$. We also have that $v_{n}\to 0$ in $L^{\frac{12}{5}}(M)$ and $\psi_{n}\to \psi_{\infty}$ in $L^{\frac{12}{5}}(\Sigma M)$, thus
$\langle \psi_{n},\psi_{\infty}\rangle v_{n}\to 0$ in $L^{\frac{6}{5}}(M)$, thus in $H^{-1}(M)$. Similarly, $u_{n}\to u_{\infty} $ in $L^{\frac{8}{3}}(\partial M)$, therefore $u_{n}^{2}u_{\infty}-u_{n}u_{\infty}^{2}\to 0$ in $L^{\frac{4}{3}}(\partial M)\subset H^{-\frac{1}{2}}(\partial M)$.

Therefore,
$$d_{u}E^{b}(h_{n})=o(1) \text{ in } H^{-1}(M)\times H^{-\frac{1}{2}}(\partial M).$$
The spinorial part can be handled the same way as in the case of manifolds without boundary, see \cite{MV3}.
\end{proof}

\noindent
From now on, we will assume that our (PS) sequence $z_{n}=(u_{n},\psi_{n})$, converges weakly to zero in $H^{1}(M)\times H^{\frac{1}{2}}_+(\Sigma M)$ and strongly in $L^{p}(M)\times L^{q}(\Sigma M)$, for $p<6$ and $q<3$.\\
We assume that $z_{n}$ {\em does not} converge strongly to zero in $H^{1}(M)\times H^{\frac{1}{2}}(\Sigma M)$, since otherwise the (PS) condition would be satisfied.  We will see below that this violation of $(PS)$ happens when a certain concentration phenomena occurs.
This concentration can either be on the boundary or in the interior of $M$. In order to describe this phenomena, denoting by $B_{r}(x)$ the geodesic ball with center in $x\in M$ and radius $r$, we define the following sets, for a given $\epsilon_0>0$:
$$\Sigma_{1}=\left\{x\in \overset{\circ}{M}; \liminf_{r\to 0} \liminf_{n\to\infty}\int_{B_{r}(x)}|u_{n}|^{6}dv_{g}\geq\epsilon_{0}\right\} ,$$
$$\Sigma_{2}=\left\{x\in \overset{\circ}{M}; \liminf_{r\to 0} \liminf_{n\to\infty}\int_{B_{r}(x)}|\psi_{n}|^{3}dv_{g}\geq\epsilon_{0}\right\} ,$$
$$\Sigma_{3}=\left\{x\in \overset{\circ}{M}; \liminf_{r\to 0} \liminf_{n\to\infty}\int_{B_{r}(x)}|u_{n}|^{2}|\psi_{n}|^{2}dv_{g}\geq \epsilon_{0}\right\} ,$$

$$\tilde{\Sigma}_{1}=\left\{x\in \partial M; \liminf_{r\to 0} \liminf_{n\to\infty}\int_{B_{r}(x)}|u_{n}|^{6}\ dv_{g}+b\int_{B_{r}(x)\cap \partial M}|u_{n}|^{4}\ d\sigma_{g}\geq  \epsilon_{0}\right\},$$
$$\tilde{\Sigma}_{2}=\left\{x\in \partial M; \liminf_{r\to 0} \liminf_{n\to\infty}\int_{B_{r}(x)}|\psi_{n}|^{3}dv_{g}\geq\epsilon_{0}\right\} ,$$
$$\tilde{\Sigma}_{3}=\left\{x\in \partial M; \liminf_{r\to 0} \liminf_{n\to\infty}\int_{B_{r}(x)}|u_{n}|^{2}|\psi_{n}|^{2}\ dv_{g}+\frac{b}{4}\int_{B_{r}(x_{0})\cap \partial M}|u_{n}|^{4}\ d\sigma_{g}\geq \epsilon_{0}\right\} ,$$

$$\tilde\Sigma_{4}=\left\{x\in \partial M; \liminf_{r\to 0} \liminf_{n\to\infty}\int_{B_{r}(x)\cap \partial M}|u_{n}|^{4} d\sigma_{g}\geq \epsilon_{0}\right\} .$$

The interior concentration points in $\Sigma_{1}$, $\Sigma_{2}$ and $\Sigma_{3}$ are relatively understood and were studied in \cite{MV3}. Indeed, the following result holds:
\begin{proposition}
There exists $\epsilon_{0}>0$ depending on $M$, such that if $x_{0}\in \overset{\circ}{M}$ and $x_{0}\not \in \Sigma_{1}\cap \Sigma_{2} \cap \Sigma_{3}$, then there exists $r>0$ such that $z_{n}\to 0$ in $H^{1}(B_{r}(x_{0}))\times H^{\frac{1}{2}}(\Sigma B_{r}(x_{0}))$.
In particular, one has that $\Sigma_{1}=\Sigma_{2}=\Sigma_{3}$.
\end{proposition}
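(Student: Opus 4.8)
The plan is to establish an $\varepsilon$-regularity statement: if the concentration thresholds fail at an interior point $x_0$, then the sequence $z_n = (u_n, \psi_n)$ converges strongly to zero in a small ball around $x_0$. This is the interior analogue of the regularity argument of Theorem \ref{thmreg} run with an extra smallness hypothesis, so I would first fix $\beta \in C_c^\infty(B_r(x_0))$ supported in the interior with $\beta \equiv 1$ on a smaller ball, and compute, as in the proof of Theorem \ref{thmreg}, the equations satisfied by $\beta u_n$ and $\beta \psi_n$. Using the (PS) equations \eqref{equ}, \eqref{eqp} (the boundary terms \eqref{equb} do not appear since $B_r(x_0)$ is interior), one gets $L_g(\beta u_n) = |\psi_n|^2 \beta u_n + (\text{lower order involving } \nabla\beta) + \beta\varepsilon_n$ and $D_g(\beta \psi_n) = |u_n|^2 \beta\psi_n + \nabla\beta\cdot\psi_n + \beta\delta_n$. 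The key point is that the multiplication operators $v \mapsto (\eta|\psi_n|^2)\,v$ and $\phi \mapsto (\eta|u_n|^2)\,\phi$ have operator norm controlled by $\|\psi_n\|_{L^3(B_r)}^2$ and $\|u_n\|_{L^6(B_r)}^2$ respectively (H\"older, exactly as in the Lemma above), so if these local critical norms are small the perturbed operators $(L_g - \eta|\psi_n|^2)$ and $(D_g - \eta|u_n|^2)$ are invertible on the relevant Sobolev/$W^{k,p}$ scales with bounds uniform in $n$.

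The second step is to convert "$x_0 \notin \Sigma_1 \cap \Sigma_2 \cap \Sigma_3$" into the required smallness. By definition of the $\Sigma_i$, if $x_0$ misses at least one of them there is $r>0$ with, say, $\liminf_n \int_{B_r(x_0)} |u_n|^6 < \varepsilon_0$ (or the analogous statement for $|\psi_n|^3$ or $|u_n|^2|\psi_n|^2$). One then has to propagate smallness of one critical quantity to the others. Here I would bootstrap: from smallness of $\|u_n\|_{L^6(B_r)}$ the invertibility of $D_g - \eta|u_n|^2$ on $H^{1/2}_+ \to H^{-1/2}$ gives $\|\psi_n\|_{H^{1/2}(B_{r'})} \lesssim \|\psi_n\|_{L^2} + \|\delta_n\| \to 0$ along the subsequence (using weak-to-zero convergence and $L^2_{loc}$ compactness for the lower-order terms), hence $\|\psi_n\|_{L^3(B_{r'})} \to 0$; then smallness of $\|\psi_n\|_{L^3}$ feeds the invertibility of $L_g - \eta|\psi_n|^2$ on $H^1 \to H^{-1}$, yielding $\|u_n\|_{H^1(B_{r''})} \to 0$. (If instead it was $\|\psi_n\|_{L^3}$ or the mixed norm that was small to begin with, start the cycle at the corresponding operator; the mixed quantity $\int |u_n|^2|\psi_n|^2$ controls, via Cauchy–Schwarz and the $L^2 \hookrightarrow$ estimates, the right-hand sides directly.) The choice of $\varepsilon_0$ depending only on $M$ comes from the constants in the Sobolev/trace embeddings and the elliptic estimates for $(L_g,B_g)$ and $(D_g,\mathbb{B}^+)$, which are geometric.

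The final step is bookkeeping: having shown $v := \beta u_n \to 0$ in $H^1$ and $\phi := \beta\psi_n \to 0$ in $H^{1/2}_+$ on the double ball, and since $\beta \equiv 1$ on $B_{r''}(x_0)$, this gives $z_n \to 0$ in $H^1(B_{r''}(x_0)) \times H^{1/2}(\Sigma B_{r''}(x_0))$, which is the assertion. The statement "$\Sigma_1 = \Sigma_2 = \Sigma_3$" then follows by contraposition: any point lying in one of the three sets but not in the intersection would, by the above, be a point of local strong convergence to zero, contradicting membership in that set (each $\Sigma_i$ records a critical quantity bounded below by $\varepsilon_0$, which strong local convergence to zero kills); hence the three sets coincide.

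\medskip

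The main obstacle I anticipate is the propagation of smallness between the three different critical norms in a way that is genuinely uniform in $n$ — the perturbed operators must be inverted with $n$-independent bounds, which requires that the smallness of one local norm actually forces the smallness of the others \emph{before} passing to a further subsequence, and that the cut-off commutator terms $\nabla\beta\cdot\psi_n$, $u_n\Delta\beta$, $g(\nabla\beta,\nabla u_n)$ (which live on the annulus $\mathrm{supp}\,\nabla\beta$) are handled by the $L^2_{loc}$ (hence compact, hence $\to 0$) convergence coming from $z_n \rightharpoonup 0$. Keeping track of which radius shrinks at each bootstrap stage, and ensuring $\varepsilon_0$ can be chosen once and for all from the geometry, is the delicate part; the rest is a direct transcription of the regularity machinery already set up for Theorem \ref{thmreg}.
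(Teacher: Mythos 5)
Your proposal is correct and follows essentially the same $\varepsilon$-regularity/absorption scheme that the paper uses: the paper itself defers this interior-point proposition to \cite{MV3}, but its proof of the analogous boundary lemma proceeds exactly as you describe — cut off, use the (PS) equations, bound the critical multiplication terms by $\|u_n\|^2_{L^6(B_r)}$, $\|\psi_n\|^2_{L^3(B_r)}$ or $\bigl(\int_{B_r}|u_n|^2|\psi_n|^2\bigr)^{1/2}$ via H\"older, absorb them for $\epsilon_0$ small, and propagate smallness from whichever of the three quantities is below threshold to local strong convergence of both components, with the identity $\Sigma_1=\Sigma_2=\Sigma_3$ following by contraposition. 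The only cosmetic caveat is that the $\liminf$ in the definitions means the smallness (and hence the conclusion) is obtained along a subsequence, a point the paper glosses over in the same way.
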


For the boundary concentration points, the situation is a  bit different.
\begin{lemma}
There exists $\epsilon_{0}>0$ such that $\tilde{\Sigma}_{2}\subset \tilde{\Sigma}_{1}=\tilde{\Sigma}_{3}$. Moreover, if $x_{0}\in \partial M$ and $x_{0}\not \in \tilde{\Sigma}_{1}$ then there exists $r>0$ such that $z_{n}\to 0$ in $H^{1}(B_{r}(x_{0}))\times H^{\frac{1}{2}}(\Sigma B_{r}(x_{0}))$. In addition, $\tilde{\Sigma}_{2}\setminus \tilde\Sigma_{4}=\tilde{\Sigma}_{3}\setminus\tilde\Sigma_{4}=\tilde{\Sigma}_{1}\setminus \tilde\Sigma_{4}$.
\end{lemma}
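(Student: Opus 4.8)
The plan is to run the interior $\varepsilon$-regularity/localization argument of the preceding proposition (and of \cite{MV3}) at a boundary point, now carrying along the boundary nonlinearity $b\int_{\partial M}u^4$ and checking that the chiral bag condition survives a cut-off. As a preliminary I would pass to a subsequence so that the nonnegative measures $|u_n|^6\,dv_{g}+b|u_n|^4\,d\sigma_{g}$, $|\psi_n|^3\,dv_{g}$, $|u_n|^2|\psi_n|^2\,dv_{g}+\tfrac{b}{4}|u_n|^4\,d\sigma_{g}$ and $|u_n|^4\,d\sigma_{g}$ all converge weakly-$*$; then $x_0\in\tilde\Sigma_i$ means that the corresponding limit measure has mass $\ge\epsilon_0$ at $x_0$, and ``$x_0\notin\tilde\Sigma_i$'' means that the mass of a sufficiently small half-ball about $x_0$ is $<\epsilon_0$ along the subsequence.

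The heart of the matter is a pair of localized a priori estimates. Fix $x_0\in\partial M$, work in Fermi coordinates, and let $\rho,\eta\in C^\infty(M)$ be cut-offs, $0\le\rho,\eta\le1$, supported in a small half-ball about $x_0$, with $\eta\equiv1$ on $\supp\rho$ and $\rho\equiv1$ on $B_r(x_0)$. Testing \eqref{equ}--\eqref{equb} against $\rho^2u_n$, integrating by parts and using H\"older together with $H^1\hookrightarrow L^6(M)$ and $H^1\hookrightarrow L^4(\partial M)$ — the $\nabla\rho$ cross-terms being $o(1)(1+\|\rho u_n\|_{H^1})$ since $u_n\to0$ in $L^2$, and $\varepsilon_n,r_n$ contributing $o(1)(1+\|\rho u_n\|_{H^1})$ — I expect
\[
\|\rho u_n\|_{H^1}^2\le C\big(\|\psi_n\|_{L^3(\Sigma\,\supp\eta)}^2+b\|u_n\|_{L^4(\supp\eta\cap\partial M)}^2\big)\|\rho u_n\|_{H^1}^2+o(1)\big(1+\|\rho u_n\|_{H^1}\big),
\]
together with the ``absolute'' companion
\[
\|\rho u_n\|_{H^1}^2\le C\Big(\int_{\supp\eta}u_n^2|\psi_n|^2\,dv_{g}+b\int_{\supp\eta\cap\partial M}u_n^4\,d\sigma_{g}\Big)+o(1)\big(1+\|\rho u_n\|_{H^1}\big).
\]
For the spinor: since $\mathbb{B}^+$ acts fibrewise, $\rho\psi_n$ still lies in $H^{\frac{1}{2}}_{+}(\Sigma M)$, and since $Y(M,\partial M,[g])>0$ there are no harmonic spinors, so $\|\rho\psi_n\|_{H^{1/2}}^2=\langle D_g(\rho\psi_n),\,P^+(\rho\psi_n)-P^-(\rho\psi_n)\rangle$; substituting $D_g(\rho\psi_n)=\rho D_g\psi_n+\nabla\rho\cdot\psi_n$, using \eqref{eqp} and the self-adjointness of $D_g$ under chiral bag conditions (so no boundary term appears), I expect
\[
\|\rho\psi_n\|_{H^{1/2}}^2\le C\,\|u_n\|_{L^6(\supp\eta)}^2\,\|\rho\psi_n\|_{H^{1/2}}^2+o(1)\big(1+\|\rho\psi_n\|_{H^{1/2}}\big).
\]

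With these estimates and $\epsilon_0$ chosen small, the proof closes as follows. If $x_0\notin\tilde\Sigma_1$, then on a small enough half-ball $\|u_n\|_{L^6}^2$ and $b\|u_n\|_{L^4(\partial M)}^2$ are small, so the spinor estimate gives $\|\rho\psi_n\|_{H^{1/2}}\to0$, whence $\|\psi_n\|_{L^3}\to0$ locally, and the first $u$-estimate then forces $\|\rho u_n\|_{H^1}\to0$; this is the ``moreover'' part, and by contraposition (local strong convergence makes every concentration integral vanish) it yields $\tilde\Sigma_2,\tilde\Sigma_3,\tilde\Sigma_4\subset\tilde\Sigma_1$. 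For the reverse inclusion $\tilde\Sigma_1\subset\tilde\Sigma_3$: if $x_0\notin\tilde\Sigma_3$, the absolute $u$-estimate gives $\limsup_n\|\rho u_n\|_{H^1}^2\le C\epsilon_0$, and since $\int_{B_r}u_n^6\le C\|\rho u_n\|_{H^1}^6$ and $b\int_{B_r\cap\partial M}u_n^4\le Cb\|\rho u_n\|_{H^1}^4$, the super-linearity of these exponents forces both right-hand sides to be $<\epsilon_0$ once $\epsilon_0$ is small, so $x_0\notin\tilde\Sigma_1$; hence $\tilde\Sigma_1=\tilde\Sigma_3$. Finally, to obtain $\tilde\Sigma_1\setminus\tilde\Sigma_4\subset\tilde\Sigma_2$: if $x_0$ lies in neither $\tilde\Sigma_2$ nor $\tilde\Sigma_4$, then locally both $\|\psi_n\|_{L^3}$ and $\|u_n\|_{L^4(\partial M)}$ are small, so the first $u$-estimate gives $\|\rho u_n\|_{H^1}\to0$ and $x_0\notin\tilde\Sigma_1$; combined with $\tilde\Sigma_2\subset\tilde\Sigma_1=\tilde\Sigma_3$ this delivers $\tilde\Sigma_2\setminus\tilde\Sigma_4=\tilde\Sigma_3\setminus\tilde\Sigma_4=\tilde\Sigma_1\setminus\tilde\Sigma_4$.

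I expect the spinorial localization estimate to be the only real obstacle: $|D_g|^{1/2}$ is non-local, so a cut-off does not commute with the $H^{\frac{1}{2}}$-norm and one must argue through the quadratic form $\langle D_g\cdot,\cdot\rangle$ and the spectral splitting $H^{\frac{1}{2}}_{+}=H^{\frac{1}{2},+}_{+}\oplus H^{\frac{1}{2},-}_{+}$; what makes this go through on a manifold with boundary is precisely that the chiral bag projector $\mathbb{B}^+$ is fibrewise, so $\rho\psi_n$ automatically satisfies the boundary condition and no boundary contribution is produced when manipulating $D_g$. A secondary, purely bookkeeping, point is the simultaneous choice of $\epsilon_0$ reconciling the four concentration sets $\tilde\Sigma_1,\dots,\tilde\Sigma_4$ and the constant $b$.
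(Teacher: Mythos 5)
Your proposal is correct and follows the same boundary $\varepsilon$-regularity scheme as the paper: when $x_0\notin\tilde\Sigma_1$ you estimate the spinor first (using smallness of $\|u_n\|_{L^6}$ locally) and then the scalar part (using the just-obtained local smallness of $\|\psi_n\|_{L^3}$ together with the boundary $L^4$ smallness contained in the definition of $\tilde\Sigma_1$), and when $x_0\notin\tilde\Sigma_2\cup\tilde\Sigma_4$ you estimate the scalar part first; the nesting of cut-offs and the single choice of $\epsilon_0$ are the same bookkeeping as in the paper. Two implementation points differ, and both are legitimate. First, you derive the localized estimates by testing with $\rho^2u_n$ and by the spectral identity $\|\rho\psi_n\|^2_{H^{1/2}}=\langle D_g(\rho\psi_n),P^+(\rho\psi_n)-P^-(\rho\psi_n)\rangle$ (valid since $Y(M,\partial M,[g])>0$ excludes harmonic spinors, and $\rho\psi_n\in H^{1/2}_+$ because $\mathbb{B}^+$ acts fibrewise), whereas the paper invokes the elliptic estimate for $(L_g,B_g)$ and the invertibility bound $\|\eta\psi_n\|_{H^{1/2}}\leq C\|D_g(\eta\psi_n)\|_{H^{-1/2}}$; the content is identical. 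Second, for $\tilde\Sigma_1\subset\tilde\Sigma_3$ the paper couples the two smallness estimates ($\|\rho\psi_n\|_{H^{1/2}}\leq C\epsilon_0^{1/2}\|\rho u_n\|_{H^1}+o(1)$ and its counterpart) to obtain full local strong convergence, while you use the ``absolute'' bound $\|\rho u_n\|^2_{H^1}\leq C\epsilon_0+o(1)$ together with the supercritical exponents $6$ and $4$ to check directly that the $\tilde\Sigma_1$-quantity drops below $\epsilon_0$ for $\epsilon_0$ small; this also replaces the paper's terse appeal to H\"older for $\tilde\Sigma_3\subset\tilde\Sigma_1$ by the cleaner contraposition of the ``moreover'' statement, and it buys a slightly shorter argument at the (harmless) price of letting $\epsilon_0$ depend on the Sobolev/trace constants and on $b$. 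The only cosmetic caveat is in the scalar estimate: its coefficient requires $\|\psi_n\|_{L^3}$ only on $\supp\rho$, so you should run the spinor estimate with the outer cut-off (equal to one on $\supp\rho$), exactly the nesting used in the paper.
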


Before we proceed to the proof of the previous lemma, we provide a clarification on the different concentration sets. Mainly, $\tilde{\Sigma}_{1}$ and $\tilde{\Sigma}_{3}$ represent either a semi-interior blow-up (controlled by the $L^{6}(M)$-norm), or a pure boundary blow-up (controlled by the $L^{4}(\partial M)$-norm), of the scalar part $u$. $\tilde{\Sigma}_{2}$ represents semi-interior blow up of the spinorial part and finally $\tilde{\Sigma}_{4}$ represents the pure boundary blow up of $u$. The main difference here, compared to\cite{MV3}, is that there can be a concentration in $u$ while $\psi$ is being controlled. This leads for instance to a limit equation that is purely scalar, namely, the equation of prescribing zero scalar curvature and constant mean curvature on the sphere. The other alternative is a semi-interior concentration of $u$ while the pure boundary behaviour is controlled. Again, this leads to \eqref{eq:bubblessphere} that we will study in depth in section \ref{sec:groundstates}.

\begin{proof}
We will prove this result by contradiction, by assuming that for every $\epsilon>0$, there exists $x_{0}\not \in \tilde{\Sigma}_{1}$, such that for every $r>0$, $z_{n}\not \to 0$ in $H^{1}(B_{r}(x_{0}))\times H^{\frac{1}{2}}(\Sigma B_{r}(x_{0}))$.\\
Given $\epsilon>0$, there exists $r>0$ such that $\int_{B_{4r}(x_{0})}|u_{n}|^{6}\ dv_{g}+b\int_{B_{4r}(x_{0})\cap \partial M}|u_{n}|^{4}\ d\sigma_{g}<\epsilon$. We first estimate the $\psi$ component. That is, we consider a smooth cut off function $\eta$ supported on $B_{4r}(x_{0})$ and equals to $1$ on $B_{2r}(x_{0})$, then by (\ref{eqp}) we have:
\begin{align}
D_g(\eta\psi_{n})&=\eta D_g\psi_{n}+\nabla \eta \cdot \psi_{n}\notag \\
&=\eta |u_{n}|^{2}\psi_{n}+\nabla \eta \cdot \psi_{n}+\eta\delta_{n}, \notag
\end{align}
where $\|\delta_{n}\|_{H^{-\frac{1}{2}}}\to 0$. Hence, recalling that $\ker(D_g)=\{0\}$, we have
\begin{align}
\|\eta\psi_{n}\|_{H^{\frac{1}{2}}}&\leq C_{1}\|\eta |u_{n}|^{2}\psi_{n}+\nabla \eta \cdot \psi_{n}+\eta\delta_{n}\|_{H^{-\frac{1}{2}}}\notag\\
&\leq C_{2}\left(\|\eta|u_{n}|^{2}\psi_{n}\|_{L^{\frac{3}{2}}}+ \|\psi_{n}\|_{L^{\frac{3}{2}}}+\|\delta_{n}\|_{H^{-\frac{1}{2}}}\right) .\notag
\end{align}
Since $\|\psi_{n}\|_{L^{\frac{3}{2}}}\to 0$, it remains to estimate
\begin{align}
\|\eta |u_{n}|^{2}\psi_{n}\|_{L^{\frac{3}{2}}}&\leq \|u_{n}\|_{L^{6}(B_{4r}(x_{0}))}^{2}\|\eta \psi_{n}\|_{L^{3}} \leq C_{3}\epsilon^{\frac{1}{3}} \|\eta \psi_{n}\|_{H^{\frac{1}{2}}} .\notag
\end{align}
Hence, taking $C_{3}\epsilon^{1/3}<\frac{1}{2}$, we deduce that $\eta \psi_{n}\to 0$ in $H^{\frac{1}{2}}$, yielding
$$\|\eta \psi_{n}\|_{L^{3}}\to 0.$$
In particular, $x_{0}\not \in \tilde{\Sigma}_{2}$, so that $\tilde\Sigma_2\subset\tilde\Sigma_1$. We estimate next the scalar component $u$. So, consider a smooth cut off function $\rho$ supported on $B_{2r}(x_{0})$ and equals to $1$ on $B_{r}(x_{0})$, then by (\ref{equ}) we have
\begin{align}
L_{g} (\rho u_{n})&=\rho L_{g} u_{n}- u_{n}\Delta_{g} \rho-2g(\nabla \rho, \nabla u_{n})\notag \\
&=\rho|\psi_{n}|^{2}u_{n}- u_{n}\Delta_{g} \rho-2\nabla \rho \cdot \nabla u_{n}+\rho \varepsilon_{n}\notag
\end{align}
and
$$\frac{\partial(\rho u_{n})}{\partial \nu}=\rho b u_{n}^{3}+\frac{\partial \rho}{\partial \nu}u_{n}+\rho r_{n}$$
where $\varepsilon_{n}\to 0$ in $H^{-1}(M)$ and $r_{n}\to 0$ in $H^{-\frac{1}{2}}(\partial M)$. From elliptic estimates now we have that
\begin{align}
\|\rho u_{n}\|_{H^{1}}&\leq C\Big(\| \rho|\psi_{n}|^{2 }u_{n}- u_{n}\Delta \rho+2g(\nabla \rho , \nabla u_{n})+\rho \varepsilon_{n}\|_{H^{-1}}+\|\rho b u_{n}^{3}+\frac{\partial \rho}{\partial \nu}u_{n}+\rho r_{n}\|_{H^{-\frac{1}{2}}(\partial M)}\Big) \notag\\
&\leq C\Big(\|\rho|\psi_{n}|^{2}u_{n}\|_{L^{\frac{6}{5}}}+\|u_{n}\Delta \rho\|_{L^{\frac{6}{5}}}+2\|g(\nabla \rho , \nabla u_{n})\|_{H^{-1}}+b\|\rho u_{n}^{3}\|_{L^{\frac{4}{3}}(\partial M)}\notag\\
&\qquad+\|\frac{\partial \rho}{\partial \nu}u_{n}\|_{L^{\frac{4}{3}}(\partial M)}+\|\rho \varepsilon_{n}\|_{H^{-1}}+\|\rho r_{n}\|_{H^{-\frac{1}{2}}(\partial M)}\Big).\notag
\end{align}
We estimate the terms $\|\rho|\psi_{n}|^{2}u_{n}\|_{L^{\frac{6}{5}}}$ and $\|\rho u_{n}^{3}\|_{L^{\frac{4}{3}}(\partial M)}$:
\begin{align}
\|\rho|\psi_{n}|^{2}u_{n}\|_{L^{\frac{6}{5}}}&\leq \| \eta \psi_{n}\|^{2}_{L^{3}}\|\rho u_{n}\|_{L^{6}} \leq C_{1}\| \eta \psi_{n}\|^{2}_{H^{\frac{1}{2}}}\|\rho u_{n}\|_{H^{1}} . \notag
\end{align}
and
$$\|\rho u_{n}^{3}\|_{L^{\frac{4}{3}}(\partial M)}\leq C\|\rho^{\frac{1}{3}} u_{n}\|_{L^{4}(\partial M)}^{3}\leq C_{2}\|\rho u_{n}\|_{H^{1}(M)}\|u_{n}\|_{L^{4}(B_{2r}\cap \partial M)}^{2}.$$
From the previous estimates and the estimate on the spinorial component $\psi_{n}$, we know that for $n$ big enough we have $CC_{1}\| \eta \psi_{n}\|^{2}_{H^{\frac{1}{2}}}<\frac{1}{2}$. Similarly, since $x_{0}\not \in \tilde{\Sigma}_{1}$, we can choose $r$ so that $CC_{2}\|u_{n}\|_{L^{4}(B_{2r}\cap \partial M)}^{2}\leq CC_{2}\epsilon <\frac{1}{2}$. Thus we have that
$$\|\rho u_{n}\|_{H^{1}}\leq C\Big( \|u_{n}\Delta \rho\|_{L^{\frac{6}{5}}}+2\|g(\nabla \rho , \nabla u_{n})\|_{H^{-1}}+\|\frac{\partial \rho}{\partial \nu}u_{n}\|_{L^{\frac{4}{3}}(\partial M)}+\|\rho \varepsilon_{n}\|_{H^{-1}}+\|\rho r_{n}\|_{H^{-\frac{1}{2}}(\partial M)}\Big).$$
Now clearly
$$\|u_{n}\Delta \rho\|_{L^{\frac{6}{5}}(M)}\leq C\|u_{n}\|_{L^{2}(M)}.$$
Similarly, $$\|\frac{\partial \rho}{\partial \nu}u_{n}\|_{L^{\frac{4}{3}}(\partial M)}\leq C\|u_{n}\|_{L^{2}(\partial M)}.$$
Next, the term
$$\|g(\nabla \rho , \nabla u_{n})\|_{H^{-1}}\leq \sup_{k\in H^{1};\|k\|_{H^{1}}\leq 1}\left|\int_{M}g(\nabla\rho,\nabla u_{n})kdv_{g}\right| .$$
But
\begin{align}
\left|\int_{M} g(\nabla \rho , \nabla u_{n})kdv_{g}\right|&\leq \left|\int_{M}u_{n}\left(k\Delta \rho+g(\nabla \rho, \nabla k ) \right)dv_{g}\right|+\left|\int_{\partial M} u_{n} \frac{\partial \rho}{\partial \nu} k \ d\sigma_{g}\right|\notag\\
&\leq C\|k\|_{H^{1}}\Big(\|u_{n}\|_{L^{2}(M)}+\|u_{n}\|_{L^{2}(\partial M)}\Big) \to 0 .\notag
\end{align}
Hence $\rho u_{n}$ converges to zero in $H^{1}(B_{r}(x_{0}))$ and this leads to a contradiction.\\
Now, we assume that $x_{0}\not \in \tilde{\Sigma}_{2}\cup \tilde \Sigma_{4}$. Then, there exists $r>0$ such that $\int_{B_{4r}(x_{0})}|\psi_{n}|^{3}dv_{g}<\epsilon_{0}$. Then again we compute
\begin{align}
L_{g} (\rho u_{n})&=\rho (L_{g} u_{n})- u_{n}\Delta_{g} \rho -2g(\nabla \rho , \nabla u_{n})\notag \\
&=\rho|\psi_{n}|^{2}u_{n}- u_{n}\Delta_{g} \rho-2g(\nabla \rho , \nabla u_{n})+\rho \varepsilon_{n} ,\notag
\end{align}
where $\varepsilon_{n}\to 0$ in $H^{-1}$. From elliptic estimates now we have that
\begin{align}
\|\rho u_{n}\|_{H^{1}}&\leq C\|\rho|\psi_{n}|^{2}u_{n}- u_{n}\Delta_g \rho-2g(\nabla \rho , \nabla u_{n})+\rho \varepsilon_{n}\|_{H^{-1}}\|\rho b u_{n}^{3}+\frac{\partial \rho}{\partial \nu}u_{n}+\rho r_{n}\|_{H^{-\frac{1}{2}}(\partial M)}\Big) \notag\\
&\leq C\Big(\|\rho|\psi_{n}|^{2}u_{n}\|_{L^{\frac{6}{5}}}+\|u_{n}\Delta \rho\|_{L^{\frac{6}{5}}}+2\|g(\nabla \rho , \nabla u_{n})\|_{H^{-1}}+b\|\rho u_{n}^{3}\|_{L^{\frac{4}{3}}(\partial M)}\notag\\
&\qquad+\|\frac{\partial \rho}{\partial \nu}u_{n}\|_{L^{\frac{4}{3}}(\partial M)}+\|\rho \varepsilon_{n}\|_{H^{-1}}+\|\rho r_{n}\|_{H^{-\frac{1}{2}}(\partial M)}\Big).\notag
\end{align}
Again, we estimate
\begin{align}
\|\rho|\psi_{n}|^{2}u_{n}\|_{L^{\frac{6}{5}}}&\leq \| \psi_{n}\|^{2}_{L^{3}}\|\rho u_{n}\|_{L^{6}}\leq C_{1}\epsilon_{0}^{\frac{2}{3}}\|\rho u_{n}\|_{H^{1}} .\notag
\end{align}
and
$$b\|\rho u_{n}^{3}\|_{L^{4}{3}(\partial M)}\leq C_{2}\epsilon_{0}^{\frac{1}{2}}\|\rho u_{n}\|_{H^{1}}.$$
Taking $\epsilon_{0}$ even smaller if necessary, so that $C_{1}C\epsilon_{0}^{\frac{2}{3}}<\frac{1}{2}$ and $C_{2}C\epsilon_{0}^{\frac{1}{2}}<\frac{1}{2}$, we have that
\begin{align}
\|\rho u_{n}\|_{H^{1}} & \leq C_{1}\Big(\|u_{n}\Delta_g \rho\|_{L^{\frac{6}{5}}}+2\|g(\nabla \rho , \nabla u_{n})\|_{H^{-1}} +\|\rho\varepsilon_{n}\|_{H^{-1}} \notag\\
&+\|\frac{\partial \rho}{\partial \nu}u_{n}\|_{L^{\frac{4}{3}}(\partial M)}+\|\rho r_{n}\|_{H^{-\frac{1}{2}}(\partial M)}\Big),\notag
\end{align}
and as in the previous case we have that
$$\|u_{n}\Delta_g \rho\|_{L^{\frac{6}{5}}}+2\|g(\nabla \rho , \nabla u_{n})\|_{H^{-1}} +\|\rho\varepsilon_{n}\|_{H^{-1}}+\|\frac{\partial \rho}{\partial \nu}u_{n}\|_{L^{\frac{4}{3}}(\partial M)}+\|\rho r_{n}\|_{H^{-\frac{1}{2}}(\partial M)}\to 0.$$
Hence $\|\rho u_{n}\|_{H^{1}}\to 0$. Next, we estimate the spinorial component:
\begin{align}
\|\eta\psi_{n}\|_{H^{\frac{1}{2}}}&\leq C_{1}\|\eta |u_{n}|^{2}\psi_{n}+\nabla \eta \cdot \psi_{n}+\eta \delta_{n}\|_{H^{-\frac{1}{2}}}\notag\\
&\leq C_{2}\left(\|\eta |u_{n}|^{2}\psi_{n}\|_{L^{\frac{3}{2}}}+ \|\psi_{n}\|_{L^{\frac{3}{2}}}+\|\delta_{n}\|_{H^{-\frac{1}{2}}}\right).\notag
\end{align}
But
$$\|\eta |u_{n}|^{2}\psi_{n}\|_{L^{\frac{3}{2}}}\leq \|\rho u_{n}\|_{L^{6}}^{2}\|\eta \psi_{n}\|_{H^{\frac{1}{2}}}.$$
Using the fact that $\|\rho u_{n}\|_{H^{1}}\to 0$, we have that $z_{n}\to 0$ in $H^{1}(B_{r}(x_{0}))\times H^{\frac{1}{2}}(\Sigma B_{r}(x_{0}))$. In particular, $x_{0}\not \in \tilde{\Sigma}_{1}$ and as a corollary we have $\tilde{\Sigma}_{1}\setminus \tilde{\Sigma}_{4}=\tilde{\Sigma}_{2}\setminus\tilde{\Sigma}_{4}$. \\
To finish the proof, it remains to study $\tilde{\Sigma}_{3}$. First, it is clear from H\"{o}lder's inequality that if $x_{0}\not\in \tilde{\Sigma}_{1}$, then $x_{0}\not \in \tilde{\Sigma}_{3}$. So $\tilde{\Sigma}_{3}\subset \tilde{\Sigma}_{1}$.\\
Assume that $x_{0}\not \in \tilde{\Sigma}_{3}$. Then one can pick $r>0$ so that $\int_{B_{2r}(x_{0})}|u_{n}|^{2}|\psi_{n}|^{2}dv_{g}<\epsilon_{0}$. We have then
$$\|\rho \psi_{n}\|_{H^{\frac{1}{2}}} \leq C_{2}\left(\|\rho |u_{n}|^{2}\psi_{n}\|_{L^{\frac{3}{2}}}+o(1)\right).$$
But
$$\|\rho |u_{n}|^{2}\psi_{n}\|_{L^{\frac{3}{2}}}\leq \left(\int_{B_{2r}(x_{0})}|u_{n}|^{2}|\psi_{n}|^{2}\ dv_{g}\right)^{\frac{1}{2}}\|\rho u_{n}\|_{L^{6}} ,$$
thus
$$\|\rho\psi_{n}\|_{H^{\frac{1}{2}}} \leq C\epsilon_{0}^{\frac{1}{2}}\|\rho u_{n}\|_{H^{1}}+o(1).$$
Similarly, we have for the  $u$ component,
$$\|\rho u_{n}\|_{H^{1}}\leq C \Big(\|\rho |\psi_{n}|^{2}u_{n}\|_{L^{\frac{6}{5}}}+b\|\rho u_{n}^{3}\|_{L^{\frac{4}{3}}(\partial M)}\Big)+o(1),$$
and
$$\|\rho |\psi_{n}|^{2}u_{n}\|_{L^{\frac{6}{5}}}\leq \left(\int_{B_{2r}(x_{0})}|u_{n}|^{2}|\psi_{n}|^{2}dv_{g}\right)^{\frac{1}{2}}\|\rho \psi_{n}\|_{L^{3}}. $$
Hence
$$\|\rho u_{n}\|_{H^{1}}\leq C\epsilon_{0}^{\frac{1}{2}}\|\rho \psi_{n}\|_{H^{\frac{1}{2}}}+o(1).$$
Combining both the previous inequalities we have $\rho z_{n}\to 0$ in $H^1(B_{r}(x_{0}))\times H^{\frac{1}{2}}(\Sigma B_{r}(x_{0}))$ and $x_{0}\not \in \tilde{\Sigma}_{1}$.
\end{proof}

\noindent
Based on the previous lemma, we define the sets $\mathcal{S}=\mathcal{S}_{int}\cup \mathcal{S}_{\partial}$, where  $\mathcal{S}_{int}:=\Sigma_{1}$ and $\mathcal{S}_{\partial}:=\tilde{\Sigma}_{1}$. So we can deduce
\begin{corollary}
Let $(z_{n})$ be a (PS) sequence at the level $c$. Then

\begin{itemize}
\item If $(z_{n})$ does not satisfy the (PS) condition, then
$$\mathcal{S}\not= \emptyset.$$

\item If $c<\frac{\epsilon_{0}}{2}$ then $(z_{n})$ converges strongly to zero.

\end{itemize}
\end{corollary}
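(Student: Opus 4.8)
The plan is to deduce both statements almost directly from the concentration dichotomy already established in this section (the interior proposition on $\Sigma_1,\Sigma_2,\Sigma_3$ and the boundary lemma on $\tilde\Sigma_1,\dots,\tilde\Sigma_4$), keeping in mind the running reduction under which $z_n=(u_n,\psi_n)\rightharpoonup 0$ in $\mathcal{H}$, $z_n\to 0$ strongly in $L^p(M)\times L^q(\Sigma M)$ for $p<6$, $q<3$, and $u_n\to 0$ in $L^r(\partial M)$ for $r<4$; under this reduction ``$(z_n)$ satisfies the (PS) condition'' means exactly $z_n\to 0$ strongly in $\mathcal{H}$ (along the subsequence already fixed).

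For the first item I would argue by contrapositive: suppose $\mathcal{S}=\mathcal{S}_{int}\cup\mathcal{S}_{\partial}=\Sigma_1\cup\tilde\Sigma_1=\emptyset$. Since $\Sigma_1\cap\Sigma_2\cap\Sigma_3=\Sigma_1=\emptyset$, the interior proposition gives, for every $x\in\overset{\circ}{M}$, a radius $r_x>0$ with $z_n\to 0$ in $H^1(B_{r_x}(x))\times H^{1/2}(\Sigma B_{r_x}(x))$; since $\tilde\Sigma_1=\emptyset$, the boundary lemma does the same at each $x\in\partial M$. Using compactness of $M$ I would pass to a finite subcover and sum the local $H^1$-bounds through the embeddings $H^1(M)\hookrightarrow L^6(M)$ and $H^1(M)\hookrightarrow L^4(\partial M)$ to get $\|u_n\|_{L^6(M)}\to 0$ and $\|u_n\|_{L^4(\partial M)}\to 0$. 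Then one more global elliptic estimate closes the argument: equation \eqref{eqp} together with $\ker D_g=\{0\}$ (which holds since $Y(M,\partial M,[g])>0$) and $\|\,|u_n|^2\psi_n\|_{H^{-1/2}}\le C\|u_n\|_{L^6(M)}^2\|\psi_n\|_{L^3}\to 0$ yields $\|\psi_n\|_{H^{1/2}}\to 0$; and testing \eqref{equ}--\eqref{equb} against $u_n$ exactly as in the proof of Proposition~\ref{propPS} gives $\|u_n\|^2\le\|u_n\|_{L^6(M)}^2\|\psi_n\|_{L^3}^2+b\|u_n\|_{L^4(\partial M)}^4+o(\|u_n\|)\to 0$. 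Hence $z_n\to 0$ in $\mathcal{H}$, i.e. the (PS) condition holds; contrapositively, its failure forces $\mathcal{S}\neq\emptyset$.

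For the second item I would first record the energy identity coming from \eqref{lev}: since $E^b(z_n)\to c$, $dE^b(z_n)\to 0$ and $\|z_n\|$ is bounded (Proposition~\ref{propPS}), one has $\int_M|u_n|^2|\psi_n|^2\,dv_g+\tfrac{b}{2}\int_{\partial M}u_n^4\,d\sigma_g=2c+o(1)$. As both summands are nonnegative, for every $x\in M$, every $r>0$ and $n$ large the quantity $\int_{B_r(x)}|u_n|^2|\psi_n|^2\,dv_g+\tfrac{b}{4}\int_{B_r(x)\cap\partial M}|u_n|^4\,d\sigma_g$ is bounded above by $2c+o(1)$; letting $n\to\infty$, then $r\to 0$, and using $2c<\epsilon_0$ shows $\Sigma_3=\tilde\Sigma_3=\emptyset$. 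Since $\Sigma_1=\Sigma_3$ and $\tilde\Sigma_1=\tilde\Sigma_3$ by the preceding results, $\mathcal{S}=\Sigma_1\cup\tilde\Sigma_1=\emptyset$, and the first item then gives $z_n\to 0$ strongly.

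The step I expect to require the most care is the passage from the \emph{local} vanishing on a finite cover to \emph{global} strong convergence in $\mathcal{H}$: because the $H^{1/2}$-norm of spinors is non-local one cannot simply add up local norms, so the argument must be routed through the global smallness of $u_n$ in $L^6(M)$ and $L^4(\partial M)$ and a final application of the elliptic/Fredholm estimates for $(D_g,\mathbb{B}^+)$ and $(L_g,B_g)$. Everything else reduces to Hölder's inequality and the identities already established in this section.
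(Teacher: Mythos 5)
Your proposal is correct and follows essentially the same route as the paper: the second item is exactly the paper's use of \eqref{lev} to force the concentration sets to be empty once $2c<\epsilon_{0}$, and the first item is the contrapositive of the preceding local-compactness proposition and lemma, which the paper leaves implicit. Your additional finite-cover argument combined with the global elliptic estimates for $(D_g,\mathbb{B}^+)$ and $(L_g,B_g)$ soundly fills in the local-to-global passage (needed because of the non-locality of the $H^{\frac{1}{2}}$-norm) that the paper's short proof glosses over.
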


\begin{proof}
The proof follows from the boundedness of the (PS) sequences. Indeed, from \eqref{lev} we have
$$\int_{M}|u_{n}|^{2}|\psi_{n}|^{2}dv_{g}+\frac{b}{2}\int_{\partial M}u_{n}^{4}\ d\sigma_{g}=2c+o(1).$$
Hence if $2c<\epsilon_0$, we have that for $n$ large enough,
$$\int_{M}|u_{n}|^{2}|\psi_{n}|^{2}\ dv_{g}+\frac{b}{2}\int_{\partial M}u_{n}^{4}\ d\sigma_{g}<\epsilon_0.$$
Thus $z_{n}\to 0$ strongly in $H^{1}(M)\times H^{\frac{1}{2}}_{+}(\Sigma M)$.
\end{proof}

We recall that if $\overline{g}=f^{4}g$ then
$$B_{\overline{g}}(f^{-1}u)=f^{-3}B_{g}(u),$$
If $\psi \in \Sigma_{g}M$ and $F(\psi) \in \Sigma_{\overline{g}}M$, where $F$ is the isomorphism defined in $(\ref{eq:covariantoperators})$, then we have
$$\mathbb{B}^{\pm}_{\overline{g}}(F(\psi))=F(\mathbb{B}_{g}^{\pm}(\psi)).$$

\medskip

For now on, we will focus on the set $\mathcal{S}_{\partial}$. The analysis of concentration phenomena occurring at points in $\mathcal{S}_{int}$ is exactly similar to the case of a manifold without boundary treated in \cite{MV3}.
\medskip

 For a given (PS) sequence $(z_{n})$, we define the boundary concentration function $Q_{n}$ for $r> 0$ by
$$Q_{n}(r)=\sup_{x\in \partial M} \int_{B_{r}(x)}|u_{n}|^{2}|\psi_{n}|^{2}\ dv_{g}+\frac{b}{2}\int_{B_{r}(x)\cap \partial M}u_{n}^{4}\ d\sigma_{g}.$$
We choose $\epsilon>0$ so that $3\epsilon<\epsilon_{0}$, then if $\Sigma_3\not=0$, we have the existence of $x_{n}\in M$ and $R_{n}\to 0$ such that
$$Q_{n}(R_{n})=\int_{B_{R_{n}}(x_{n})}|u_{n}|^{2}|\psi_{n}|^{2}dv_{g}+\frac{b}{2}\int_{B_{R_{n}}(x_{n})\cap \partial M}u_{n}^{4}\ d\sigma_{g}=\epsilon .$$
Without loss of generality, we can always assume that $x_{n}\in \partial M$, $x_{n}\to x_{0}$ and $i(M)\geq 3$, where $i(M)$ is the boundary injectivity radius of $M$. Also, we define the map $\rho_{n}(x)=\mathcal{F}_{x_{n}}(R_{n}x)$ for $x\in \R^{3}_{+}$ such that $R_{n}|x|<3$. Here $\mathcal F$ denotes the Fermi coordinates map, for brevity.
We denote also $\sigma_{n}=\rho_{n}^{-1}$.

We let $B_{R,+}^{0}$ denote the upper half of the Euclidean ball centered at zero and with radius $R$. That is,
$$B_{R.+}^{0}=\{x=(\tilde x,\tilde y,\tilde z)\in \R^{3}; |x|< R; \tilde z\geq 0\}.$$
We can then consider the metric $g_{n}$ on $B_{R,+}^{0}$ defined by a suitable rescaled of the pull-back of $g$:
$$g_{n}=R_{n}^{-2}\rho_{n}^{*}g .$$
Clearly, the two Riemannian patches $(B_{R,+}^{0},g_{n})$ and $(B_{RR_{n},+}(x_{n}),g)$ are conformally equivalent for $n$ large enough and $g_{n}\to g_{\R^{3}_{+}}$ in $C^{\infty}(B_{R,+}^{0})$. We consider now the identification map  (see \cite{BG})
$$(\rho_{n})_{*}:\Sigma_{p}(B_{R,+}^{0},g_{n})\to \Sigma_{\rho_{n}(p)}(B_{RR_{n},+}(x_{n}),g),$$
and we set
$$\rho_{n}^{*}(\varphi)=(\rho_{n})_{*}^{-1}\circ \varphi \circ \rho_{n}.$$
Using these maps, we can define the spinors $\Psi_{n}$ on $\Sigma B_{R,+}^{0}$ by
$$\Psi_{n}=R_{n}\rho_{n}^{*}\psi_{n} ,$$
and from the conformal change of the Dirac operator, we have that
$$D_{g_{n}}\Psi_{n}=R_{n}^{2}\rho_{n}^{*}D_{g}\psi_{n} ,$$
and
$$\mathbb{B}_{g}^{\pm}(\Psi_n)=0, \text{ on } B_{R,+}^{0}\cap \partial \R^{3}_{+}.$$
So we get:
$$\int_{B_{R,+}^{0}}\langle D_{g_{n}}\Psi_{n},\Psi_{n}\rangle dv_{g_{n}}=\int_{B_{RR_{n},+}(x_{n})}\langle D_{g}\psi_{n},\psi_{n}\rangle dv_{g},$$
$$\int_{B_{R,+}^{0}}|\Psi_{n}|^{3}dv_{g_{n}}=\int_{B_{RR_{n},+}(x_{n})}|\psi_{n}|^{3}dv_{g}.$$
Now we consider the $u$ component, that is we define
$$U_{n}=R_{n}^{\frac{1}{2}}\rho_{n}^{*}u_{n} ,$$
so that by conformal change of the conformal Laplacian, we have:
$$L_{g_{n}}U_{n}=R_{n}^{\frac{5}{2}}\rho^{*}_{n}L_{g}u_{n} ,$$
and
$$B_{g_{n}}U_{n}=R^{\frac{3}{2}}\rho_{n}^{*}B_{g}u_{n}.$$
Hence
$$\int_{B_{R,+}^{0}}U_{n}L_{g_{n}}U_{n}dv_{g_{n}}=\int_{B_{RR_{n},+}(x_{n})}u_{n}L_{g}u_{n}dv_{g},$$

\begin{equation}\label{e1a}
\int_{B_{R,+}^{0}}|U_{n}|^{6}dv_{g_{n}}=\int_{B_{RR_{n},+}(x_{n})}|u_{n}|^{6}dv_{g},
\end{equation}

\begin{equation}\label{e1b}
\int_{B_{R,+}^{0}\cap \partial\R^{3}_{+}}|U_{n}|^{4}d\sigma_{g_{n}}=\int_{B_{RR_{n},+}(x_{n})\cap \partial M}|u_{n}|^{4}d\sigma_{g},
\end{equation}
and
$$\int_{B_{R,+}^{0}}|U_{n}|^{2}|\Psi_{n}|^{2}dv_{g_{n}}=\int_{B_{RR_{n},+}(x_{n})}|u_{n}|^{2}|\psi_{n}|^{2}dv_{g}.$$
We have the following:
\begin{lemma}\label{lemloc}
Let us set
$$F_{n}=L_{g_{n}}U_{n}-|\Psi_{n}|^{2}U_{n}, \qquad H_{n}=D_{g_{n}}\Psi_{n}-|U_{n}|^{2}\Psi_{n} \text{ and } K_{n}=B_{g_{n}}U_{n}-bU_{n}^{3}.$$
Then
$$F_{n}\to 0  \text{ in } H^{-1}_{loc}(\R^{3}_{+}), \qquad  H_{n}\to 0   \text{ in } H^{-\frac{1}{2}}_{loc}(\Sigma \R^{3}_{+}), \text{ and }  K_{n}\to 0 \text{ in } H^{-\frac{1}{2}}_{loc}(\partial \R^{3}_{+}).$$
\end{lemma}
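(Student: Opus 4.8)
The plan is to recognise the three defects $F_n,H_n,K_n$ as conformal rescalings of the Palais--Smale errors $\varepsilon_n,\delta_n,r_n$, and then to argue by duality: any test object supported in a fixed half-ball $B_{R,+}^{0}$ pulls back, under the conformal rescaling that defines $U_n,\Psi_n$, to a test object supported in the collapsing half-ball $B_{RR_n,+}(x_n)\subset M$ whose norm stays bounded uniformly in $n$; pairing against it and using $\varepsilon_n,\delta_n,r_n\to0$ then yields the conclusion.

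First I would record the rescaling identities. Using the conformal covariance of $L_g$, $D_g$ and $B_g$ from Section \ref{sec:conformalcovariance}, the diffeomorphism naturality of these operators, and the definitions $U_n=R_n^{1/2}\rho_n^{*}u_n$, $\Psi_n=R_n\rho_n^{*}\psi_n$, one checks directly that
\[
F_n=R_n^{5/2}\rho_n^{*}\varepsilon_n,\qquad H_n=R_n^{2}\rho_n^{*}\delta_n,\qquad K_n=R_n^{3/2}\rho_n^{*}r_n
\]
as distributions; this is exactly the bookkeeping underlying the scaling identities stated just before the lemma (e.g.\ $R_n^{5/2}\rho_n^{*}(|\psi_n|^{2}u_n)=|\Psi_n|^{2}U_n$, and analogously for the cubic and the spinorial nonlinearities). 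Together with $dv_{g_n}=R_n^{-3}\rho_n^{*}dv_g$ and $d\sigma_{g_n}=R_n^{-2}\rho_n^{*}d\sigma_g$, each local pairing on the half-space then becomes a pairing on $M$ against the ``de-rescaled'' test object.

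For the scalar defect, fix $R>0$ and $\varphi\in C_c^{\infty}(\overline{B_{R,+}^{0}})$ vanishing near $\{|x|=R\}$, and set $w_n:=R_n^{-1/2}\sigma_n^{*}\varphi\in H^{1}(M)$ (extended by zero), which is to $\varphi$ what $u_n$ is to $U_n$. The identity of the previous paragraph gives $\langle F_n,\varphi\rangle=\langle\varepsilon_n,w_n\rangle_{H^{-1}(M),H^{1}(M)}$. By the conformal invariance of $\int_M uL_gu\,dv_g+\int_{\partial M}uB_gu\,d\sigma_g=\|u\|^{2}$ (see \eqref{eq:conformalinvariance}, \eqref{eq:Binvariance}) and $g_n\to g_{\R^{3}_{+}}$ in $C^{\infty}(B_{R,+}^{0})$, one has $\|w_n\|_{H^{1}(M)}\le C_R\|\varphi\|_{H^{1}(B_{R,+}^{0})}$ uniformly in $n$ (the $L^{2}$ part of the norm even carries a vanishing factor $R_n^{2}$, and the Dirichlet part converges to $\int_{\R^{3}_{+}}|\nabla\varphi|^{2}$), so $|\langle F_n,\varphi\rangle|\le C_R\|\varepsilon_n\|_{H^{-1}(M)}\|\varphi\|_{H^{1}}\to0$, i.e.\ $F_n\to0$ in $H^{-1}_{loc}(\R^{3}_{+})$. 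The boundary defect is handled the same way: extending $\theta\in C_c^{\infty}(\partial\R^{3}_{+})$ to such a $\varphi$ gives $\langle K_n,\theta\rangle=\langle r_n,w_{n\,\vert\partial M}\rangle$ with $\|w_{n\,\vert\partial M}\|_{H^{1/2}(\partial M)}\le C\|w_n\|_{H^{1}(M)}$ by the trace theorem, whence $K_n\to0$ in $H^{-1/2}_{loc}(\partial\R^{3}_{+})$. For the spinorial defect take $\chi\in C_c^{\infty}(\Sigma\R^{3}_{+})$ with $\mathbb{B}^{+}_{g_n}\chi=0$ on the flat boundary and let $\xi_n:=R_n^{-1}\sigma_n^{*}\chi$ be its de-rescaling; since the chiral bag condition is conformally natural (Section \ref{sec:conformalcovariance}, $\mathbb{B}^{\pm}_{\bar g}(F\psi)=F(\mathbb{B}^{\pm}_g\psi)$), $\xi_n\in H^{1/2}_{+}(\Sigma M)$ and $\langle H_n,\chi\rangle=\langle\delta_n,\xi_n\rangle$. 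As $Y(M,\partial M,[g])>0$ forces $\ker D_g=0$, the Cauchy--Schwarz interpolation $\|\xi\|_{H^{1/2}_{+}(\Sigma M)}^{2}=\langle|D_g|\xi,\xi\rangle_{L^{2}}\le\|\xi\|_{L^{2}(\Sigma M)}\|D_g\xi\|_{L^{2}(\Sigma M)}$ is available; tracking scales (using $D_g\xi_n=R_n^{-2}\sigma_n^{*}(D_{g_n}\chi)$, the conformal covariance of $D_g$, and $g_n\to g_{\R^{3}_{+}}$) one finds $\|\xi_n\|_{L^{2}}\le CR_n^{1/2}$ and $\|D_g\xi_n\|_{L^{2}}\le CR_n^{-1/2}$, so $\|\xi_n\|_{H^{1/2}_{+}(\Sigma M)}\le C_R$ uniformly and $|\langle H_n,\chi\rangle|\le\|\delta_n\|_{H^{-1/2}(\Sigma M)}\|\xi_n\|_{H^{1/2}_{+}(\Sigma M)}\to0$.

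I expect the main obstacle to be twofold: carefully verifying the rescaling identities of the second paragraph, and, above all, checking that the de-rescaled test objects keep \emph{uniformly} bounded norms despite their collapsing supports. The genuinely delicate point is the spinorial term, because the $H^{1/2}$-norm is non-local and does not localise the way the $H^{1}$-norm does; one must exploit the exact scale invariance of the rescaled problem (the factors $\|\xi_n\|_{L^{2}}\sim R_n^{1/2}$ and $\|D_g\xi_n\|_{L^{2}}\sim R_n^{-1/2}$ conspire to a bounded product) together with the conformal naturality of the chiral bag boundary condition to make the duality pairing legitimate.
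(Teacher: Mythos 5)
Your proposal is correct and follows essentially the same route as the paper: it identifies $F_n,H_n,K_n$ as the rescaled Palais--Smale errors $R_n^{5/2}\rho_n^*\varepsilon_n$, $R_n^{2}\rho_n^*\delta_n$, $R_n^{3/2}\rho_n^*r_n$, argues by duality against test objects supported in a fixed half-ball, and controls the de-rescaled test norms uniformly, using the interpolation $\|\cdot\|_{H^{1/2}}^2\le\|\cdot\|_{L^2}\|\cdot\|_{H^1}$-type bound for the half-order pairings exactly as the paper does. The only difference is that you spell out the scale bookkeeping (e.g.\ $\|\xi_n\|_{L^2}\sim R_n^{1/2}$, $\|D_g\xi_n\|_{L^2}\sim R_n^{-1/2}$) that the paper states without detail.
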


\noindent
Here the convergence in $H^{-1}_{loc}$ is understood in the sense that for all $R>0$,
$$\sup\left\{\langle F_{n}, F\rangle_{H^{-1},H^{1}}; F\in H^{1}(\R^{3}_{+}), \; \text{supp}(F)\subset B_{R,+}^{0},\; \|F\|_{H^{1}}\leq 1\right\}\to 0,$$
and similarly for $H_{n}$ and $K_{n}$.
\begin{proof}
Recall \eqref{equ},\eqref{equb},\eqref{eqp}. Notice that by construction, we have that
$$L_{g_{n}}U_{n}-|\Psi_{n}|^{2}U_{n}=R_{n}^{\frac{5}{2}}\rho_{n}^{*}(L_{g}u_{n}-|\psi_{n}|^{2}u_{n}) .$$
Hence we get
$$F_{n}=R_{n}^{\frac{5}{2}}\rho_{n}^{*}(\varepsilon_{n}) ,$$
and similarly
$$H_{n}=R_{n}^{2}\rho_{n}^{*}(\delta_{n}) \text{ and } K_{n}=R_{n}^{\frac{3}{2}}r_{n} .$$
Now we consider $F\in H^{1}(\R^{3}_{+})$ such that $supp(F)\subset B_{R,+}^{0}$ and $\|F\|_{H^{1}}\leq 1$. Since $R_{n}\to 0$, then for $n$ big enough we have that:
\begin{align}
\langle F_{n},F\rangle_{H^{-1},H^{1}} &=\int_{B^{0}_{R_{n}^{-1},+}}F_{n}F dv_{g_n}\notag \\
&=\int_{B^{0}_{R_{n}^{-1}},+}\rho^{*}_{n}(\varepsilon_{n})R_{n}^{\frac{5}{2}} F dv_{g_{n}}\notag\\
&=\int_{B^{0}_{R_{n}^{-1},+}}\rho^{*}_{n}(\varepsilon_{n})R_{n}^{-\frac{1}{2}} F dv_{\rho^{*}_{n}g}\notag\\
&=\int_{B_{1}(x_{n})}\varepsilon_{n}R_{n}^{-\frac{1}{2}}\sigma_{n}^{*}(F) dv_{g} \,,\notag
\end{align}
where $\sigma^*_n=(\rho^*_n)^{-1}$. On the other hand, we have that $\|R_{n}^{-\frac{1}{2}}\sigma_{n}^{*}(F)\|_{H^{1}}\leq C$, hence
$$\langle F_{n},F\rangle_{H^{-1},H^{1}}\to 0.$$
A similar estimate holds for $H_{n}$ and $K_{n}$. Indeed, in order to estimate the $H^{\frac{1}{2}}$-norm independently from $n$, we can assume without loss of generality that our test function $F$ is in $H^{1}$ and use the interpolation inequality $$\|F\|_{H^{\frac{1}{2}}}^{2}\leq \|F\|_{H^{1}}\|F\|_{L^{2}}.$$
\end{proof}

\noindent
Consider the space
$$D^{1}(\R^{3}_{+})=\left\{u\in L^{6}(\R^{3}_{+}); |\nabla u| \in L^{2}(\R^{3}_{+})\right\} $$
and
$$D^{\frac{1}{2}}(\Sigma\R^{3})=\left\{\psi \in L^{3}(\Sigma\R^{3});|\xi|^{\frac{1}{2}}|\widehat{\psi}|\in L^{2}(\R^{3})\right\},$$
where here $\widehat{\psi}$ is the Fourier transform of $\psi$. The space $D^{\frac{1}{2}}(\R^{3}_{+})$ is then the restriction of functions in $D^{\frac{1}{2}}(\R^{3})$ to $\R^{3}_{+}$. We have then the following:

\begin{lemma}\label{lemlim}
For $\epsilon>0$ small enough, there exist $U_{\infty}\in D^{1}(\R^{3}_{+})$ and $\Psi_{\infty}\in D^{\frac{1}{2}}(\Sigma \R^{3}_{+})$ such that $U_{n}\to U_{\infty}$ in $H^{1}_{loc}(\R^{3}_{+})$ and $\Psi_{n}\to \Psi_{\infty}$ in $H^{\frac{1}{2}}_{loc}(\Sigma\R^{3}_{+})$. Moreover they satisfy
\begin{equation}\label{eqR3}
\left\{\begin{array}{ll}
-\Delta_{g_{\R^3}} U_{\infty}=|\Psi_{\infty}|^{2}U_{\infty} \\
& \text{ on } \R^{3}_{+} .\\
D_{g_{\R^3}}\Psi_{\infty}=|U_{\infty}|^{2}\Psi_{\infty}\\
&\\
\mathbb{B}^{+}_{\R^{3}}\Psi_{\infty}=0\\
&\text{ on } \partial \R^{3}_{+}.\\
B_{g_{\R^{3}}}U_{\infty}=bU_{\infty}^{3}\\
\end{array}
\right.
\end{equation}
\end{lemma}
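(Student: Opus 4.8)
The plan is to establish convergence of the rescaled pairs $(U_n,\Psi_n)$ to a nontrivial limit by combining the local elliptic estimates already at hand, the $\varepsilon$-regularity philosophy from the proof of Theorem~\ref{thmreg}, and the vanishing of the error terms proved in Lemma~\ref{lemloc}. First I would fix $R>0$ and recall that, by construction, $\|U_n\|_{H^1(B^0_{R,+})}$ and $\|\Psi_n\|_{H^{1/2}(\Sigma B^0_{R,+})}$ are uniformly bounded: indeed the $H^1$-norm of $u_n$ and the $H^{1/2}$-norm of $\psi_n$ are bounded by Proposition~\ref{propPS}, and under the conformal rescaling these norms transform into the corresponding norms of $(U_n,\Psi_n)$ on $B^0_{RR_n,+}(x_n)$, up to a factor tending to $1$ since $g_n\to g_{\R^3_+}$ in $C^\infty_{loc}$. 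Hence, passing to a subsequence and using a diagonal argument over an exhaustion $B^0_{R,+}$, $R\to\infty$, we obtain $U_\infty\in H^1_{loc}(\R^3_+)$, $\Psi_\infty\in H^{1/2}_{loc}(\Sigma\R^3_+)$ with $U_n\rightharpoonup U_\infty$ weakly and, by the compact Sobolev embeddings on bounded sets, $U_n\to U_\infty$ strongly in $L^p_{loc}$ for $p<6$, $\Psi_n\to\Psi_\infty$ strongly in $L^q_{loc}$ for $q<3$, and traces converge in $L^r_{loc}(\partial\R^3_+)$ for $r<4$.

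Next I would pass to the limit in the equations. Testing the rescaled equations $L_{g_n}U_n=|\Psi_n|^2U_n+F_n$, $D_{g_n}\Psi_n=|U_n|^2\Psi_n+H_n$, $B_{g_n}U_n=bU_n^3+K_n$ against a fixed compactly supported test function (scalar or spinorial), the left-hand sides converge because $g_n\to g_{\R^3_+}$ smoothly and $U_n,\Psi_n$ converge weakly; the nonlinear terms converge by the strong $L^p_{loc}$ convergence just established (e.g. $|\Psi_n|^2U_n\to|\Psi_\infty|^2U_\infty$ in $L^{6/5}_{loc}$ since $\Psi_n\to\Psi_\infty$ in $L^{12/5}_{loc}$ and $U_n\to U_\infty$ in $L^6_{loc}$ weakly, and similarly for the boundary cubic term using $L^{8/3}_{loc}(\partial\R^3_+)$ convergence of traces); and the error terms $F_n,H_n,K_n$ vanish in the dual norms by Lemma~\ref{lemloc}. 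The chiral boundary condition $\mathbb B^+_{\R^3}\Psi_\infty=0$ is preserved since $\mathbb B^+_{g_n}\Psi_n=0$ and $\mathbb B^+$ is a continuous (order-zero) operator compatible with the conformal rescaling. This yields that $(U_\infty,\Psi_\infty)$ solves \eqref{eqR3} weakly, hence by the regularity argument of Theorem~\ref{thmreg} (applied locally on $\R^3_+$) it is smooth.

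The remaining point — and the one I expect to be the \textbf{main obstacle} — is upgrading the weak-$H^1_{loc}$/$H^{1/2}_{loc}$ convergence to strong local convergence, so that the limit carries a definite amount of energy and is therefore nontrivial. Here I would argue exactly as in the $\varepsilon$-regularity step: the normalization $Q_n(R_n)=\epsilon$ with $\epsilon$ chosen so that $3\epsilon<\epsilon_0$ guarantees that on every ball $B^0_{r,+}(y)$ the localized critical quantities $\int|U_n|^2|\Psi_n|^2 + \tfrac b2\int_{\partial}|U_n|^4$ stay below the threshold $\epsilon_0$ (after possibly shrinking $r$, using the scaling of $Q_n$ and the fact that $x_n$ realizes the supremum). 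By the smallness estimates in the proof of the regularity lemma — $\|P_1\|_{op},\|P_2\|_{op}$ being controlled by such critical norms — the operators $(L_{g_n}-|\Psi_n|^2,\,B_{g_n}-bU_n^2)$ and $D_{g_n}-|U_n|^2$ are uniformly invertible on small balls, so that $\|U_n\|_{H^1(B^0_{r,+})}$ and $\|\Psi_n\|_{H^{1/2}(\Sigma B^0_{r,+})}$ are controlled by lower-order ($L^2_{loc}$) norms of $U_n,\Psi_n$, which converge strongly; this gives $U_n\to U_\infty$ in $H^1_{loc}(\R^3_+)$ and $\Psi_n\to\Psi_\infty$ in $H^{1/2}_{loc}(\Sigma\R^3_+)$. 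Finally, the identity $Q_n(R_n)=\epsilon$ combined with the change-of-variables formulas for $\int|U_n|^2|\Psi_n|^2$ and $\int_{\partial}|U_n|^4$ shows that $\int_{B^0_{1,+}}|U_\infty|^2|\Psi_\infty|^2+\tfrac b2\int_{\partial}|U_\infty|^4=\epsilon>0$, so $(U_\infty,\Psi_\infty)\neq(0,0)$. The membership $U_\infty\in D^1(\R^3_+)$, $\Psi_\infty\in D^{1/2}(\Sigma\R^3_+)$ then follows from the uniform bound on $\|U_n\|_{H^1}$, $\|\Psi_n\|_{H^{1/2}}$ together with Fatou's lemma applied to $\|\nabla U_n\|_{L^2}$ and $\||\xi|^{1/2}\widehat{\Psi_n}\|_{L^2}$, after extending $\Psi_\infty$ to $\R^3$.
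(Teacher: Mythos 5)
Your plan follows the paper's proof quite closely: rescale, extract weak limits by a diagonal argument, pass to the limit in the equations using Lemma \ref{lemloc} and $g_n\to g_{\R^3}$ in $C^\infty_{loc}$, keep the chiral condition, and use the conformal-invariance bounds to place $(U_\infty,\Psi_\infty)$ in $D^1(\R^3_+)\times D^{1/2}(\Sigma\R^3_+)$; all of this is essentially what the paper does and is fine (the paper gets the $D^1\times D^{1/2}$ membership from the limit equation rather than Fatou, but both routes work).

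The gap is in the step you yourself flag as the main obstacle, the upgrade from weak to strong local convergence. From the smallness of the critical quantities you conclude that the operators $(L_{g_n}-|\Psi_n|^2,\,B_{g_n}-bU_n^2)$ and $D_{g_n}-|U_n|^2$ are uniformly invertible, so that $\|U_n\|_{H^1(B^0_{r,+})}$ and $\|\Psi_n\|_{H^{1/2}}$ are ``controlled by lower-order norms of $U_n,\Psi_n$, which converge strongly; this gives strong convergence.'' That inference is invalid: an a priori bound of $\|U_n\|_{H^1}$ by strongly convergent $L^2_{loc}$ quantities (plus vanishing errors) yields only uniform boundedness of $U_n$ in $H^1_{loc}$, not convergence. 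To get strong convergence one must run the estimates on the differences $U_n-U_\infty$, $\Psi_n-\Psi_\infty$, and then the right-hand sides contain the critical cross terms $(|\Psi_n|^2-|\Psi_\infty|^2)U_\infty$, $|\Psi_n|^2(U_n-U_\infty)$ and, on the boundary, $U_n^3-U_\infty^3$; these are exactly at critical integrability, so they do \emph{not} tend to zero in $H^{-1}$, resp.\ $H^{-1/2}$, by the compact embeddings alone, and your uniform-invertibility argument cannot see them because the inverse operators depend on $\Psi_n,U_n$, which only converge in subcritical norms. This is precisely why the paper first reduces to the case of a zero weak limit, replacing $(U_n,\Psi_n)$ by $(U_n-U_\infty,\Psi_n-\Psi_\infty)$ via the splitting argument of Lemma \ref{lemsplit} (so the subtracted pair is again an approximate solution, and the commutator and lower-order terms then vanish), and only afterwards runs the two coupled estimates $\|\beta^2U_n\|_{H^1}\le C\epsilon^{1/2}\bigl(\|\beta^2\Psi_n\|_{L^3}+\|\beta^2U_n\|_{H^1}\bigr)+o(1)$ and $\|\beta^2\Psi_n\|_{H^{1/2}}\le C\epsilon^{1/2}\|\beta^2U_n\|_{L^6}+o(1)$, which close by absorption for $\epsilon$ small. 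Without this reduction (or an equivalent direct treatment of the cross terms on the differences), your argument delivers only weak convergence, and then the identity $\int_{B^0_{1,+}}|U_\infty|^2|\Psi_\infty|^2\,dv+\tfrac b2\int_{\partial}|U_\infty|^4\,d\sigma=\epsilon$ that you invoke for nontriviality does not follow either, since it requires passing to the limit in exactly these critical integrals.
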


\begin{proof}
Since the sequence $Z_{n}=(U_n,\Psi_n)$ is bounded in $H^{1}_{loc}(\R^{3}_{+})\times H^{\frac{1}{2}}_{loc}(\Sigma\R^{3}_{+})$, for every $\beta \in C^{\infty}_{0}(\R^{3}_{+})$, we have that $\beta Z_{n}$ is bounded in $H^{1}(\R^{3}_{+})\times H^{\frac{1}{2}}(\Sigma\R^{3}_{+})$, hence there exist $U_{\infty}$ and $\Psi_{\infty}$ such that $U_{n}\rightharpoonup U_{\infty}$ in $H^{1}_{loc}(\R^{3}_{+})$ and $U_{n}\to U_{\infty}$ strongly in $L_{loc}^{p}(\R^{3}_{+})$ for $p<6$ and in $L^{q}_{loc}(\partial \R^{3}_{+})$ for $q<4$. Similarly $\Psi_{n}\rightharpoonup \Psi_{\infty}$ in $H_{loc}^{\frac{1}{2}}(\Sigma \R^{3}_{+})$ and strongly in $L_{loc}^{p}(\Sigma \R^{3}_{+})$ for $p<3$. Now we notice that from \eqref{e1a}, we have that
$$\int_{B_{R,+}^{0}}|U_{n}|^{6}dv_{g_{n}}=\int_{B_{RR_{n}}(x_{n})}|u_{n}|^{6}dv_{g}.$$
Hence
\begin{equation}\label{e3}
\limsup_{n\to \infty}\int_{B_{R,+}^{0}}|U_{n}|^{6}dv_{g_{n}}\leq \sup_{n\geq 1}\int_{M}|u_{n}|^{6}dv_{g}<+\infty ,
\end{equation}
hence $U_{\infty}\in L^{6}(\R^{3}_{+})$ and similarly $\Psi_{\infty}\in L^{3}(\Sigma\R^{3}_{+})$ and $U_{\infty}\in L^{4}(\partial \R^{3}_{+})$. Also, as in the proof of Proposition \ref{propPS}, we see that $(U_{\infty},\Psi_{\infty})$ satisfies equation \eqref{eqR3}. Therefore,
$$\int_{\R^{3}_{+}}|\nabla U_{\infty}|^{2}dv_{g_{\R^{3}}}=\int_{\R^{3}_{+}}|U_{\infty}|^{2}|\Psi_{\infty}|^{2} dv_{g_{\R^{3}}}+b\int_{\R^{2}} U_{\infty}^{4}\ d\sigma_{g_{\R^{3}}}<\infty.$$
and $\nabla \Psi_{\infty}\in L^{\frac{3}{2}}(\Sigma\R^{3}_{+})\subset H^{-\frac{1}{2}}(\Sigma\R^{3}_{+})$, which leads to $U_{\infty}\in D^{1}(\R^{3}_{+})$ and $\Psi_{\infty}\in D^{\frac{1}{2}}(\Sigma\R^{3}_{+})$. Now, using again Lemma \ref{lemsplit}, we can assume at this stage that $\Psi_{\infty}=0$ and $U_{\infty}=0$ by replacing $\Psi_{n}$ by $\Psi_{n}-\Psi_{\infty}$ and $U_{n}$ by $U_{n}-U_{\infty}$.\\
Let $\beta \in C^{\infty}_{0}(\R^{3})$, then by elliptic regularity, we have that
\begin{align}
\|\beta^{2} U_{n}\|_{H^{1}(\R^{3}_{+})}&\leq C\left(\|L_{g_{\R^{3}}}(\beta^{2}U_{n})\|_{H^{-1}}+\|B_{g_{\R^{3}}}(\beta^{2}U_{n})\|_{H^{-\frac{1}{2}}}\right)\\
&\leq C\Big(\|L_{g_{n}}(\beta^{2}U_{n})\|_{H^{-1}}+\|B_{g_{n}}U_{n}\|_{H^{-\frac{1}{2}}}+\|(L_{g_{\R^{3}}}-L_{g_{n}})(\beta^{2}U_{n})\|_{H^{-1}}\notag\\
&\qquad+\|(B_{g_{n}}-B_{g_{\R^{3}}})(\beta^{2}U_{n})\|_{H^{-\frac{1}{2}}}+\|\beta^{2}U_{n}\|_{L^{2}}\Big).\notag
\end{align}
Now, we have that $\|\beta^{2}U_{n}\|_{L^{2}}\to 0$, and we want to estimate the term
$$\|(L_{g_{\R^{3}}}-L_{g_{n}})(\beta^{2}U_{n})\|_{H^{-1}}+\|(B_{g_{n}}-B_{g_{\R^{3}}})(\beta^{2}U_{n})\|_{H^{-\frac{1}{2}}}.$$

We consider the map $\mathcal{P}_{g}:H^{1}\to H^{-1}\times H^{-\frac{1}{2}}$ defined by
$$\mathcal{P}_{g}(u)=(L_{g}u,B_{g}u).$$
The adjoint of this map is $\mathcal{P}_{g}^{*}:H^{-1}\times H^{-\frac{1}{2}}\to H^{1}$ and defined by
$$\langle u,\mathcal{P}_{g}^{*}(f,g)\rangle =\langle \mathcal{P}_{g} u, (f,g)\rangle =\langle L_{g}u,f\rangle +\langle B_{g}u,g\rangle.$$

Then from \cite[Theorems~5.1~and~6.5]{LM}, we have that $\mathcal{P}_{g}$ and $\mathcal{P}^{*}_{g}$ are continuous. Let $F\in H^{1}\times  H^{\frac{1}{2}}$, then we have
\begin{align}
&|\langle (\mathcal{P}_{g_{\R^{3}}}-\mathcal{P}_{g_{n}})(\beta^{2}U_{n}),F\rangle|=|\langle \beta U_{n},\beta (\mathcal{P}_{g_{\R^{3}}}^{*}-\mathcal{P}_{g_{n}}^{*})(F)\rangle|\leq \|\beta U_{n}\|_{H^{1}}\|\beta (\mathcal{P}_{g_{\R^{3}}}^{*}-\mathcal{P}_{g_{n}}^{*})(F)\|_{H^{-1}}\notag\\
&\leq \|\beta U_{n}\|_{H^{1}}\|F\|_{H^{1}\times H^{\frac{1}{2}}}\Big(\|\beta (L_{g_{n}}-L_{g_{\R^{3}}})\|_{H^{1}\to H^{-1}}+\|\beta(B_{g_{n}}-B_{g_{\R^{3}}})\|_{H^{\frac{1}{2}}\to H^{-\frac{1}{2}}}\Big)\notag
\end{align}

Notice that since $g_{n}\to g_{\R^{3}}$ in $C^{\infty}$, hence
$$\|(L_{g_{\R^{3}}}-L_{g_{n}})(\beta^{2}U_{n})\|_{H^{-1}}+\|(B_{g_{n}}-B_{g_{\R^{3}}})(\beta^{2}U_{n})\|_{H^{-\frac{1}{2}}}\to 0.$$
It remains to estimate the term $\|L_{g_{n}}(\beta^{2}U_{n})\|_{H^{-1}}$ and $\|B_{g_{n}}(\beta^{2}U_{n})\|_{H^{-\frac{1}{2}}}$. To this aim, observe that

$$\|L_{g_{n}}(\beta^{2}U_{n})\|_{H^{-1}}+\|B_{g_{n}}(\beta^{2} U_{n})\|_{H^{-\frac{1}{2}}}\leq \|\beta^{2}(|\Psi_{n}|^{2}U_{n}+F_{n})\|_{H^{-1}}+\|\beta^{2}(bU_{n}^{3}+K_{n})\|_{H^{-\frac{1}{2}}}+o(1),$$
and from Lemma \ref{lemloc}, we have that $\beta^{2}F_{n}\to 0$ in $H^{-1}$ and $\beta^{2}K_{n}\to 0$ in $H^{-\frac{1}{2}}$, therefore, we get
$$\|\beta^{2} U_{n}\|_{H^{1}}\leq C\| \beta^{2}|\Psi_{n}|^{2}U_{n}\|_{H^{-1}}+b\|\beta U_{n}^{3}\|_{H^{-\frac{1}{2}}}+o(1).$$
Now, if we take $supp(\beta)\in B_{1,+}^{0}$ and recall that by definition
$$\int_{B_{1,+}^{0}}|U_{n}|^{2}|\Psi_{n}|^{2}dv_{g_n}+\frac{b}{2}\int_{B_{1,+}^{0}\cap \partial \R^{3}_{+}}|U_{n}|^{4}\leq \epsilon.$$
 we have that
\begin{align}
\|\beta^{2} U_{n}\|_{H^{1}}&\leq C\Big(\| \beta^{2}|\Psi_{n}|^{2}U_{n}\|_{L^{\frac{6}{5}}(B_{1,+}^{0})}+b\|U_{n}^{3}\|_{L^{\frac{4}{3}}(B_{1,+}^{0}\cap \partial \R^{3}_{+})}\Big)+o(1)\notag \\
&\leq C\Big(\Big( \int_{B_{1}^{0}}|U_{n}|^{2}|\Psi_{n}|^{2}dv_{g_{n}}\Big)^{\frac{1}{2}}\|\beta^{2}\Psi_{n}\|_{L^{3}}+b\|\beta^{2}U_{n}\|_{H^{1}}\Big(\int_{B_{1,+}^{0}\cap \partial \R^{3}_{+}}U_{n}^{4}\ d\sigma_{g_{\R^{3}}}\Big)^{\frac{1}{2}}\Big)+o(1)\notag\\
&\leq C\epsilon^{\frac{1}{2}}\|\beta^{2}\Psi_{n}\|_{L^{3}}+C\epsilon^{\frac{1}{2}}\|\beta^{2}U_{n}\|_{H^{1}}+o(1) .\notag
\end{align}
A similar computation can be done to show that
$$\|\beta^{2} \Psi_{n}\|_{H^{\frac{1}{2}}}\leq  C\epsilon^{\frac{1}{2}}\|\beta^{2}U_{n}\|_{L^{6}}+o(1),$$
and combining these last two estimates we have that
$$\|\beta^{2}U_{n}\|_{H^{1}}+\|\beta^{2}\Psi_{n}\|_{H^{\frac{1}{2}}}\to 0.$$
\end{proof}

\noindent
We deduce from the previous Lemma that since
$$\int_{B_{1,+}^{0}}|U_{n}|^{2}|\Psi_{n}|^{2}\ dv_{g_n}+\frac{b}{2}\int_{B_{1,+}^{0}\cap \partial \R^{3}_{+}}|U_{n}|^{4} \ d\sigma_{g_{n}}=Q(R_{n})=\epsilon,$$
we also have
$$\int_{B_{1,+}^{0}}|U_{\infty}|^{2}|\Psi_{\infty}|^{2}\ dv_{g_{\R^{3}}}+\frac{b}{2}\int_{B_{1,+}^{0}\cap \partial \R^{3}_{+}}|U_{\infty}|^{4}\ d\sigma_{g_{\R^{3}}}=\epsilon.$$
In particular, $U_{\infty}\not=0$ and $(U_{\infty},\Psi_{\infty})$ satisfy equation (\ref{eqR3}); by the regularity results proved in the previous section, we have that $U_{\infty}\in C^{2,\alpha}(\R^{3}_{+})$ and $\Psi_{\infty}\in C^{1,\beta}(\Sigma\R^{3}_{+})$.\\
Notice here that there is a fundamental difference from the case where $M$ has no boundary, namely, if $b>0$. Indeed, here, $\Psi_{\infty}$ can be identically zero. Hence, we do have two kinds of solutions, corresponding to different blowup profiles: a purely Yamabe-Escobar solution $(U_{\infty},0)$ with
$$\left\{\begin{array}{ll}
-\Delta U_{\infty}=0 \text{ on } \R^{n}_{+}\\
&\\
\frac{\partial U_{\infty}}{\partial \nu}(x_{0},y_{0},0)=bU_{\infty}^{3} \text{ on } \partial \R^{3}_{+}.
\end{array}
\right.
$$
Or coupled solution with $\Psi_{\infty}\not=0$.


Now, we assume that $x_{n}\to x_{0}$ and  we consider a cut-off function $\beta=1$ on $B_{1}(x_{0})$ and $supp(\beta)\subset B_{2}(x_{0})$, we define then $v_{n}\in C^{2,\alpha}(M)$ and $\phi_{n}\in C^{1,\beta}(\Sigma M)$ by
\begin{equation}\label{e5}
v_{n}=R_{n}^{-\frac{1}{2}}\beta\sigma_{n}^{*}(U_\infty)
\end{equation}
and
\begin{equation}\label{e6}
\phi_{n}=R_{n}^{-1}\beta\sigma_{n}^{*}(\Psi_\infty) .
\end{equation}
We start by stating the following Lemma
\begin{lemma}\label{lemma1}
Let  $\overline{u}_{n}=u_{n}-v_{n}$ and $\overline{\psi}_{n}=\psi_{n}-\phi_{n}$. Then, up to a subsequence, $\overline{u}_{n}\rightharpoonup 0$ in $H^{1}(M)$ and $\overline{\psi}_{n}\rightharpoonup 0$ in $H^{\frac{1}{2}}_{+}(\Sigma M)$.
\end{lemma}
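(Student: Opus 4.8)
The plan is to reduce the statement to the weak vanishing of the rescaled profiles themselves. Since, by the standing assumption, $u_n\rightharpoonup 0$ in $H^1(M)$ and $\psi_n\rightharpoonup 0$ in $H^{\frac12}_+(\Sigma M)$, it suffices to prove that the bubbles $v_n=R_n^{-\frac12}\beta\,\sigma_n^*(U_\infty)$ and $\phi_n=R_n^{-1}\beta\,\sigma_n^*(\Psi_\infty)$ from \eqref{e5}--\eqref{e6} satisfy $v_n\rightharpoonup 0$ in $H^1(M)$ and $\phi_n\rightharpoonup 0$ in $H^{\frac12}_+(\Sigma M)$, and then to subtract.

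First I would record the uniform bounds $\|v_n\|_{H^1(M)}\le C$ and $\|\phi_n\|_{H^{\frac12}_+(\Sigma M)}\le C$. These follow from the rescaling identities established before Lemma \ref{lemloc}: performing the change of variables through $\rho_n$ and using $g_n\to g_{\R^3_+}$ in $C^\infty_{loc}$, the leading (conformally invariant) parts of the norms converge to the Dirichlet energy of $U_\infty$ and to the $D^{\frac12}$-seminorm of $\Psi_\infty$, both finite since $U_\infty\in D^1(\R^3_+)$ and $\Psi_\infty\in D^{\frac12}(\Sigma\R^3_+)$; the remaining contributions (the zeroth order term $\tfrac18\int_M R_g v_n^2$, the cross terms produced by $\nabla\beta$, and the errors $g_n-g_{\R^3_+}$) are lower order and bounded, e.g. by H\"older's inequality $\int_M v_n^2\lesssim R_n^2\|U_\infty\|_{L^6}^2\,|B_{2/R_n}|^{2/3}=O(1)$ and similarly $\int_M|\phi_n|^2\lesssim R_n\|\Psi_\infty\|_{L^3}^2\,|B_{2/R_n}|^{1/3}=O(1)$. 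Up to a subsequence we thus get $v_n\rightharpoonup w$ in $H^1(M)$ and $\phi_n\rightharpoonup\omega$ in $H^{\frac12}_+(\Sigma M)$.

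Next I would show that $w=\omega=0$ by exploiting that the bubbles concentrate at $x_0$. Fix $\delta>0$; for $n$ large one has $M\setminus B_\delta(x_0)\subset M\setminus B_{\delta/2}(x_n)$, and under $\sigma_n$ this set is sent into $\{|z|\ge\delta/(2R_n)\}\cap\R^3_+$ with $\delta/(2R_n)\to\infty$. Hence, by the same change of variables together with H\"older's inequality,
\[
\|v_n\|_{L^2(M\setminus B_\delta(x_0))}^2\ \lesssim\ R_n^2\,\|U_\infty\|_{L^6(\{|z|\ge\delta/(2R_n)\})}^2\,|B_{2/R_n}|^{2/3}\ \lesssim\ \|U_\infty\|_{L^6(\{|z|\ge\delta/(2R_n)\})}^2\,,
\]
which tends to $0$ as $n\to\infty$ because $U_\infty\in L^6(\R^3_+)$ has vanishing tail at infinity; likewise
\[
\|\phi_n\|_{L^2(\Sigma(M\setminus B_\delta(x_0)))}^2\ \lesssim\ R_n\,\|\Psi_\infty\|_{L^3(\{|z|\ge\delta/(2R_n)\})}^2\,|B_{2/R_n}|^{1/3}\ \longrightarrow\ 0 .
\]
Since the embeddings $H^1(M)\hookrightarrow L^2(M)$ and $H^{\frac12}_+(\Sigma M)\hookrightarrow L^2(\Sigma M)$ are compact, $v_n\to w$ and $\phi_n\to\omega$ strongly in $L^2$; comparing on $M\setminus B_\delta(x_0)$ with the strong $L^2$-convergence to $0$ just obtained gives $w=0$ and $\omega=0$ on $M\setminus B_\delta(x_0)$. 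Letting $\delta\to0$ yields $w=0$ in $H^1(M)$ and $\omega=0$ in $H^{\frac12}_+(\Sigma M)$. Consequently $v_n\rightharpoonup 0$, $\phi_n\rightharpoonup 0$, and therefore $\overline{u}_n=u_n-v_n\rightharpoonup 0$ in $H^1(M)$ and $\overline{\psi}_n=\psi_n-\phi_n\rightharpoonup 0$ in $H^{\frac12}_+(\Sigma M)$.

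The main obstacle is the first step: one must verify that the pieces of the $H^1$- and $H^{\frac12}$-norms of the \emph{curved} bubbles $v_n,\phi_n$ which fail to be exactly scale invariant (the zeroth order terms, the $\nabla\beta$ contributions, and the metric discrepancy $g_n-g_{\R^3_+}$) remain controlled uniformly in $n$; this is precisely where the finiteness of the energy of $(U_\infty,\Psi_\infty)$, the fact that $R_n\to0$, and the $C^\infty_{loc}$-convergence $g_n\to g_{\R^3_+}$ enter. Once these uniform bounds are in place, the concentration argument of the second step is soft.
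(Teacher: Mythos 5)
Your argument is correct, and the overall reduction is the same as the paper's: since $u_n\rightharpoonup 0$ and $\psi_n\rightharpoonup 0$ by assumption, it suffices to show that the bubbles $v_n,\phi_n$ are bounded in $H^1(M)\times H^{1/2}_+(\Sigma M)$ (a fact the paper asserts and you sketch at a comparable level of rigor) and that their weak limits vanish. Where you diverge is in the identification of the limit: the paper tests $v_n$ and $\phi_n$ against fixed smooth $f$ and $h$, splits the pairing into the inner region $B_{R_nR}(x_n)$, where the change of variables produces a factor $R_n^{5/2}\int_{B^0_{R,+}}\vert U_\infty\vert$ that vanishes for fixed $R$, and the outer region, which is controlled by the $L^6$-tail of $U_\infty$ as $R\to\infty$; you instead show that $\Vert v_n\Vert_{L^2(M\setminus B_\delta(x_0))}$ and $\Vert \phi_n\Vert_{L^2(M\setminus B_\delta(x_0))}$ tend to zero for every fixed $\delta>0$, and then invoke the compact embeddings $H^1(M)\hookrightarrow L^2(M)$ and $H^{1/2}_+(\Sigma M)\hookrightarrow L^2(\Sigma M)$ to upgrade weak convergence to strong $L^2$ convergence and conclude that the limit vanishes outside every ball around $x_0$, hence everywhere. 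Both routes ultimately rest on the same ingredients (the scaling of the change of variables and the tails of $U_\infty\in L^6(\R^3_+)$, $\Psi_\infty\in L^3(\Sigma\R^3_+)$); the paper's distributional test is more self-contained and is reused verbatim in the later estimates of Lemmas \ref{lemma2} and \ref{lemma3}, while your compactness argument dispenses with the inner-region estimate altogether and yields the slightly stronger local information that the bubbles vanish strongly in $L^2$ away from the concentration point.
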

\begin{proof}
The proof of this lemma is similar to the one for manifolds without boundary. We will still write here its details since the technical manipulations of the integrals will be useful in later proofs.\\
We already have that $u_{n}\rightharpoonup 0$ and $\psi_{n}\rightharpoonup 0$, thus to prove the lemma we only need to show the weak convergence for $v_{n}$ and $\phi_{n}$: these sequences are bounded in $H^{1}(M)$ and $H^{1/2}(\Sigma M)$ respectively, then up to subsequences, they converge to some limit. So if we show that the distributional limit is zero, then the limit in the desired space is also zero. So let $f\in C^{\infty}(M)$ and $h\in C^{\infty}(\Sigma M)$. We want to show that
$$\int_{M}v_{n}fdv_{g} \to 0$$
and
$$\int_{M}\langle \phi_{n},h\rangle\ dv_{g}\to 0.$$
We fix $R>0$, so that
\begin{align}
\int_{B_{R_{n}R}(x_{n})}v_{n}f\ dv_{g}&=R_{n}^{-\frac{1}{2}}\int_{B_{R_{n}R}(x_{n})}\beta \sigma_{n}^{*}(U_\infty) f\ dv_{g}\notag\\
&=R_{n}^{\frac{5}{2}}\int_{B_{R,+}^{0}}\rho_{n}^{*}(\beta) \rho_{n}^{*}(f)U_\infty\ dv_{g_{n}} .\notag
\end{align}
Hence
$$\left|\int_{B_{R_{n}R}(x_{n})}v_{n}f\ dv_{g}\right|\leq CR_{n}^{\frac{5}{2}} \|f\|_{\infty}\int_{B_{R,+}^{0}}|U_\infty|\ dv_{g_{\R^{3}}}.$$
Also, for $n$ big enough we have that
\begin{align}
\int_{M\setminus B_{R_{n}R}(x_{n})}v_{n}f\ dv_{g}&=\int_{B_{3}(x_{n})\setminus B_{R_{n}R}(x_{n})}v_{n}f\ dv_{g}\notag\\
&=R_{n}^{\frac{5}{2}}\int_{B_{3R_{n}^{-1},+}^{0}\setminus B_{R,+}^{0}}\rho_{n}^{*}(\beta) \rho_{n}^{*}(f)U_\infty\ dv_{g_{n}} .\notag
\end{align}
Hence, we have
\begin{align}
\left|\int_{M\setminus B_{R_{n}R}(x_{n})}v_{n}f\ dv_{g}\right|&\leq CR_{n}^{\frac{5}{2}}\|f\|_{\infty}\int_{B_{3R_{n}^{-1}}^{0}\setminus B_{R}^{0}}|U_\infty|\ dv_{g_{\R^{3}}} \notag \\
&\leq C\|f\|_{\infty}\left(\int_{B_{3R_{n}^{-1},+}^{0}\setminus B_{R,+}^{0}}|U_\infty|^{6}\ dv_{g_{\R^{3}}}\right)^{\frac{1}{6}}.\notag
\end{align}
Based on these last two inequalities we have that
$$\left|\int_{M}v_{n}f\ dv_{g}\right|\leq C\|f\|_{\infty}\left(R_{n}^{\frac{5}{2}}\int_{B_{R,+}^{0}}|U_\infty|\ dv_{g_{\R^{3}}}+\left(\int_{\R^{3}\setminus B_{R,+}^{0}}|U_\infty|^{6}\ dv_{g_{\R^{3}_{+}}}\right)^{\frac{1}{6}}\right).$$
Letting $n\to \infty$ and then $R\to \infty$ we get the desired result. A similar inequality holds for the integral involving $\phi_{n}$ and $h$.
\end{proof}

\noindent
Now we estimate the differential, that is
\begin{lemma}\label{lemma2}
We have
$$dE^{b}(v_{n},\phi_{n})\to 0 \qquad \text{ and }\qquad dE^{b}(\overline{u}_{n},\overline{\psi}_{n})\to 0 ,$$
in $H^{-1}(M)\times H^{-\frac{1}{2}}(\partial M) \times H^{-\frac{1}{2}}(\Sigma M)$.
\end{lemma}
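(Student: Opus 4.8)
The plan is to reduce the statement to local computations near the concentration point $x_0$ and to exploit the rescaling invariance that was just set up, mimicking the corresponding argument in \cite{MV3} for the boundaryless case while keeping track of the new boundary terms $B_g$, $b u^3$ and the boundary projector $\mathbb{B}^+$. The claim splits into two parts: first, $dE^b(v_n,\phi_n)\to 0$, and second, $dE^b(\overline u_n,\overline\psi_n)\to 0$; the second follows from the first together with Lemma~\ref{lemsplit} (applied in the rescaled picture, with $z_\infty=0$ after the normalization at the end of Lemma~\ref{lemlim}) once one shows that the energy and differential split along $u_n = \overline u_n + v_n$, $\psi_n=\overline\psi_n+\phi_n$ up to $o(1)$; the cross terms are handled exactly as in Lemma~\ref{lemsplit}, using that $\overline u_n\rightharpoonup 0$, $\phi_n$ is concentrated, and the relevant Sobolev exponents are subcritical. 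So the heart of the matter is the first assertion.

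For $dE^b(v_n,\phi_n)\to 0$, I would test $dE^b(v_n,\phi_n)$ against an arbitrary pair $(f,h)\in H^1(M)\times H^{1/2}_+(\Sigma M)$ with norm $\le 1$ and pull everything back to $B^0_{R,+}$ via $\rho_n$. Concretely,
\begin{align*}
\langle dE^b(v_n,\phi_n),(f,h)\rangle
&=\int_M v_n L_g f + \langle D_g\phi_n,h\rangle - |\phi_n|^2 v_n f \, dv_g
 - b\int_{\partial M} v_n^3 f\, d\sigma_g\,,
\end{align*}
and by the conformal covariance formulas \eqref{eq:conformaldirac}--\eqref{eq:conformalpenrose} together with the definitions \eqref{e5}, \eqref{e6} this becomes, up to terms supported in $B_2(x_0)\setminus B_1(x_0)$ coming from $\nabla\beta$,
$$\int_{B^0_{3R_n^{-1},+}} U_\infty\, L_{g_n}(\beta^2\,\text{-localized } \tilde f) + \langle D_{g_n}\Psi_\infty, \tilde h\rangle - |\Psi_\infty|^2 U_\infty \tilde f\, dv_{g_n} - b\int_{\partial} U_\infty^3 \tilde f\,,$$
with $\tilde f = \sigma_n^*(f)$ suitably rescaled, and analogously for the spinor. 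Since $(U_\infty,\Psi_\infty)$ solves \eqref{eqR3} exactly, the leading (Euclidean) part of this expression vanishes identically; what remains is (i) the discrepancy between $g_n$ and $g_{\R^3_+}$, which is $o(1)$ because $g_n\to g_{\R^3_+}$ in $C^\infty_{\mathrm{loc}}$ exactly as in the proof of Lemma~\ref{lemlim}, using the operators $\mathcal P_g$, $\mathcal P_g^*$ and their continuity from \cite[Thm.~5.1,~6.5]{LM}; (ii) the terms with $\nabla\beta$, which live on an annulus where $\sigma_n^*(f)$ has small rescaled norm and where $U_\infty,\Psi_\infty$ have small tails in $L^6$, $L^3$, $L^4(\partial)$ respectively by $U_\infty\in D^1(\R^3_+)$, $\Psi_\infty\in D^{1/2}(\Sigma\R^3_+)$; (iii) the error terms $F_n,H_n,K_n$ from Lemma~\ref{lemloc}, which go to zero in the appropriate negative norms. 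One also checks $\mathbb{B}^+(\phi_n)=0$ directly from $\mathbb{B}^+_{\R^3_+}\Psi_\infty=0$ and the conformal naturality $\mathbb{B}^\pm_{\bar g}(F(\psi))=F(\mathbb{B}^\pm_g(\psi))$ recalled just before this lemma, so no spurious boundary contribution appears.

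**The main obstacle** I anticipate is bookkeeping the boundary terms uniformly: one must control $b\int_{\partial M} v_n^3 f\, d\sigma_g$ and the mismatch $B_{g_n}-B_{g_{\R^3_+}}$ in $H^{-1/2}(\partial)$, using the mean-curvature transformation $h_{g_f}=e^{-f}(h_g+\partial f/\partial\nu)$ and the $H^{1/2}$-interpolation trick $\|F\|_{H^{1/2}}^2\le\|F\|_{H^1}\|F\|_{L^2}$ already used in Lemma~\ref{lemloc}, to keep the rescaled test functions bounded independently of $n$. Once these boundary estimates are in place the argument is parallel to \cite{MV3}. Finally, combining $dE^b(v_n,\phi_n)\to 0$ with $dE^b(z_n)\to 0$ and the splitting of the differential (same cross-term analysis as in Lemma~\ref{lemsplit}, now including the boundary cubic term which is $o(1)$ because $\overline u_n\to 0$ in $L^p(\partial M)$ for $p<4$ and $v_n$ is concentrated) yields $dE^b(\overline u_n,\overline\psi_n)\to 0$ in $H^{-1}(M)\times H^{-1/2}(\partial M)\times H^{-1/2}(\Sigma M)$, which completes the proof.
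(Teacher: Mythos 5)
Your treatment of the first assertion, $dE^{b}(v_{n},\phi_{n})\to 0$, is essentially the paper's argument: test against $(f,h)$, pull back by $\rho_{n}$, use that $(U_{\infty},\Psi_{\infty})$ solves \eqref{eqR3} exactly, and absorb the remaining errors into (a) the cut-off/annulus terms, controlled by the tails of $U_{\infty},\Psi_{\infty}$ in $L^{6}$, $L^{3}$, $L^{4}(\partial\R^{3}_{+})$, and (b) the discrepancy $L_{g_{n}}-L_{g_{\R^{3}}}$, $B_{g_{n}}-B_{g_{\R^{3}}}$, controlled by $g_{n}\to g_{\R^{3}}$ in $C^{\infty}_{loc}$. (One small inaccuracy: the error terms $F_{n},H_{n},K_{n}$ of Lemma \ref{lemloc} play no role here, since $v_{n},\phi_{n}$ are built from the exact solution $(U_{\infty},\Psi_{\infty})$, not from $(U_{n},\Psi_{n})$; the paper's errors are instead the $\nabla\beta$, $\beta-\beta^{3}$ and metric-discrepancy terms, which you do capture.)

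The genuine gap is in the second assertion. You propose to get $dE^{b}(\overline{u}_{n},\overline{\psi}_{n})\to 0$ by handling the cross terms ``exactly as in Lemma \ref{lemsplit}'', using weak convergence of $\overline{u}_{n}$, subcritical exponents, and $\overline{u}_{n}\to 0$ in $L^{p}(\partial M)$ for $p<4$. This does not close. Lemma \ref{lemsplit} works because one of the two factors in every cross term is the \emph{fixed, smooth} weak limit $(u_{\infty},\psi_{\infty})$, so the concentrating factor can be placed in a subcritical Lebesgue space against a bounded weight. Here both $u_{n}$ (resp.\ $\psi_{n}$) and $v_{n}$ (resp.\ $\phi_{n}$) concentrate at the same point and scale, and the cross terms sit exactly at the critical exponent: for the boundary term one must show $\Vert u_{n}v_{n}\overline{u}_{n}\Vert_{L^{4/3}(\partial M)}\to 0$, and H\"older forces the exponent $4$ on each factor, so knowing only $\overline{u}_{n}\to 0$ in $L^{p}(\partial M)$ for $p<4$ (together with $L^{4}$-boundedness of $u_{n},v_{n}$) gives boundedness, not smallness. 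The paper's proof instead splits the integral into the concentration region $B_{RR_{n}}(x_{n})$ and its complement: on the complement one uses the smallness of the bubble's \emph{critical-norm} tail, $\Vert v_{n}\Vert_{L^{4}(\partial M\setminus\partial B_{RR_{n}}(x_{n}))}\to 0$ as $R\to\infty$ (since $U_{\infty}\in L^{4}(\partial\R^{3}_{+})$), and on the concentration region one uses $\Vert u_{n}-v_{n}\Vert_{L^{4}(B_{RR_{n}}(x_{n})\cap\partial M)}\to 0$, which comes from the strong local convergence $U_{n}\to U_{\infty}$ in $H^{1}_{loc}(\R^{3}_{+})$ established in Lemma \ref{lemlim} (and its trace consequence); the interior cross terms $A_{n}$ are handled by the analogous mechanism. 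This ingredient — strong local convergence of the rescaled sequence to the bubble, combined with critical tail decay — is what your sketch omits, and without it the splitting of the differential along $u_{n}=\overline{u}_{n}+v_{n}$, $\psi_{n}=\overline{\psi}_{n}+\phi_{n}$ is not justified.
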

\begin{proof}
We set
$$f_{n}=L_{g}v_{n}-|\phi_{n}|^{2}v_{n}, \qquad k_{n}=B_{g}(u_{n})-u_{n}^{3}$$
and
$$h_{n}=D_{g}\phi_{n}-|v_{n}|^{2}\phi_{n} .$$
Let $f\in H^{1}(M)$, $h\in H^{\frac{1}{2}}_{+}(\Sigma M)$ and $k\in H^{\frac{1}{2}}(\partial M)$. We compute
\begin{align}
\int_{M}f_{n}f\ dv_{g}&=R_{n}^{-\frac{1}{2}}\left(\int_{M}(-\Delta_{g}\beta)\sigma_{n}^{*}(U_{\infty}) f\ dv_{g}+2\int_{M}g(-\nabla \beta,\nabla \sigma_{n}^{*}(U_{\infty})) f\ dv_{g} \right)\notag\\
&\quad +R_{n}^{-\frac{1}{2}}\int_{M}\beta L_{g}(\sigma_{n}^{*}(U_\infty))f\ dv_{g}-R_{n}^{-\frac{5}{2}}\int_{M}\beta^{3} |\sigma_{n}^{*}(\Psi_\infty)|^{2}\sigma_{n}^{*}(U_\infty) f\ dv_{g}\notag\\
&=R_{n}^{-\frac{1}{2}}\Big(\int_{M}(\Delta_{g}\beta) \sigma_{n}^{*}(U_{\infty}) f dv_{g}+2\int_{M}g(\nabla \beta,\nabla f) \sigma_{n}^{*}(U_{\infty}) dv_{g} -\int_{\partial M} \frac{\partial \beta}{\partial \nu} \sigma_{n}^{*}(U_{\infty}) d\sigma_{g} \Big)\notag\\
&\quad + R_{n}^{-\frac{5}{2}}\int_{M}\beta \sigma_{n}^{*}(L_{g_{n}}U_{\infty})f\ dv_{g}-R_{n}^{-\frac{5}{2}}\int_{M}\beta^{3} \sigma_{n}^{*}(|\Psi_{\infty}|^{2}U_{\infty})f\ dv_{g}\notag\\
&=R_{n}^{-\frac{1}{2}}\Big(\int_{M}(\Delta_{g}\beta) \sigma_{n}^{*}(U_{\infty}) f dv_{g}+2\int_{M}g(\nabla \beta,\nabla f) \sigma_{n}^{*}(U_{\infty})dv_{g}- \int_{\partial M} \frac{\partial \beta}{\partial \nu} \sigma_{n}^{*}(U_{\infty}) d\sigma_{g} \Big)\notag\\
&\quad + R_{n}^{-\frac{5}{2}}\int_{M}\beta \sigma_{n}^{*}\left((L_{g_{n}}-L_{g_{\R^{3}}})U_{\infty}\right)f\ dv_{g}+R_{n}^{-\frac{5}{2}}\int_{M}(\beta-\beta^{3}) \sigma_{n}^{*}(|\Psi_{\infty}|^{2}U_{\infty}) f\ dv_{g}\notag\\
&=:I_{1}+I_{2}+I_{3}+I_{4}+I_{5} .\notag
\end{align}
The estimate for $I_{1}$, $I_{2}$, $I_{4}$ and $I_{5}$ are similar to the case of manifolds with no boundary in \cite{MV3}, for which one can show that
$$I_{1}+I_{2}+I_{4}+I_{5}=\|h\|_{H^{1}(M)}o(1).$$
Therefore we focus on the extra term due to the presence of the boundary, namely $I_{3}$. We get
\begin{align}
|I_{3}|&=R_{n}^{-\frac{1}{2}}\Big|\int_{\partial M}\frac{\partial \beta}{\partial \nu}\sigma^{*}_{n}(U_{\infty}) f \ d\sigma_{g}\Big|\notag\\
&\leq R_{n}^{-\frac{1}{2}} \|f\|_{L^{4}(\partial M)}\|\frac{\partial \beta}{\partial \nu}\sigma^{*}_{n}(U_{\infty})\|_{L^{\frac{4}{3}}(\partial M)}\notag\\
&\leq C\|f\|_{H^{1}(M)} R_{n} \Big(\int_{R_{n}^{-1}\mathcal{F}^{-1}_{x_{n}}(B_{2}(x_{0})\cap \partial M )\setminus R_{n}^{-1}\mathcal{F}^{-1}_{x_{n}}(B_{1}(x_{0})\cap \partial M )}|U_{\infty}|^{\frac{4}{3}} \ d\sigma_{g_{n}}\Big)^{\frac{3}{4}}\notag\\
&\leq C\|f\|_{H^{1}} \Big(\int_{(B_{3R_{n}^{-1},+}^{0}\setminus B_{\frac{1}{2}R_{n}^{-1},+}^{0})\cap \partial \R^{3}_{+}}|U_{\infty}|^{4}\ d\sigma_{g_{\R^{3}}}\Big)^{\frac{1}{4}}\notag\\
&\leq \|f\|_{H^{1}}o(1).\notag
\end{align}

Therefore, we can conclude that $f_{n}\to 0$ in $H^{-1}(M)$ and a similar convergence holds for $h_{n}\to 0$ in $H^{-\frac{1}{2}}(\Sigma M)$.\\
We now deal with the boundary component $k_{n}$. So we take $k\in H^{\frac{1}{2}}(\partial M)$ and we have
\begin{align}
\int_{\partial M}k_{n}k d\sigma_{g}&=R_{n}^{-\frac{1}{2}}\int_{\partial M} \frac{\partial \beta}{\partial \nu}\sigma_{n}^{*}(U_{\infty})k\ d\sigma_{g}+R_{n}^{-\frac{3}{2}}\int_{\partial M}\beta k \sigma_{n}^{*}(B_{g_{n}}(U_{\infty}))\ d\sigma_{g}\notag\\
&-bR_{n}^{-\frac{3}{2}}\int_{\partial M}\beta^{3}k\sigma_{n}^{*}(U_{\infty})^{3}\ d\sigma_{g}\notag\\
&=R_{n}^{-\frac{1}{2}}\int_{\partial M} \frac{\partial \beta}{\partial \nu}\sigma_{n}^{*}(U_{\infty})k\ d\sigma_{g} +R_{n}^{-\frac{3}{2}}\int_{\partial M}\beta k \sigma_{n}^{*}(B_{g_{n}}-B_{g_{\R^{3}}})(U_{\infty}))\ d\sigma_{g}\notag\\
&+bR_{n}^{-\frac{3}{2}}\int_{\partial M}(\beta -\beta^{3})k\sigma_{n}^{*}(U_{\infty})^{3}\ d\sigma_{g}\notag\\
&=J_{1}+J_{2}+J_{3}.\notag
\end{align}

We start estimating $J_{1}$

\begin{align}
|J_{1}|&\leq C R_{n}^{-\frac{1}{2}}\|k\|_{H^{\frac{1}{2}}}\Big(\int_{R_{n}^{-1}\mathcal{F}_{x_{n}}^{-1}(B_{2}(x_{0})\cap \partial M)\setminus R_{n}^{-1}\mathcal{F}_{x_{n}}^{-1}(B_{1}(x_{0})\cap \partial M)}|U_{\infty}|^{\frac{4}{3}}\ d\sigma_{\rho^{*}_{n}g}\big)^{\frac{3}{4}}\notag\\
&\leq C R_{n} \|k\|_{H^{\frac{1}{2}}} \Big(\int_{\Big(B_{3R_{n}^{-1},+}^{0}\setminus B_{\frac{1}{2}R_{n}^{-1},+}^{0}\Big)\cap \partial \R^{3}_{+}}|U_{\infty}|^{\frac{4}{3}}\ d\sigma_{g_{n}}\Big)^{\frac{3}{4}}\notag\\
&\leq C \Big(\int_{\Big(B_{3R_{n}^{-1},+}^{0}\setminus B_{\frac{1}{2}R_{n}^{-1},+}^{0}\Big)\cap \partial \R^{3}_{+}}|U_{\infty}|^{4}\ d\sigma_{g_{\R^{3}}}\big)^{\frac{1}{4}}\|k\|_{H^{\frac{1}{2}}}\notag\\
&\leq \|k\|_{H^{\frac{1}{2}}} o(1).\notag
\end{align}

For $J_{2}$, we have
\begin{align}
|J_{2}|&\leq C\|k\|_{H^{\frac{1}{2}}}R_{n}^{-\frac{3}{2}}\Big(\int_{R_{n}^{-1}\mathcal{F}^{-1}_{x_{n}}(B_{2}(x_{0})\cap \partial M)} |(B_{g_{n}}-B_{g_{\R^{3}}})U_{\infty}|^{\frac{4}{3}}\ d\sigma_{\rho_{n}^{*}g}\Big)^{\frac{3}{4}}\notag\\
&\leq C\|k\|_{H^{\frac{1}{2}}}\|(B_{g_{n}}-B_{g_{\R^{3}}})U_{\infty}\|_{L^{\frac{4}{3}}(B_{3R_{n}^{-1},+}^{0}\cap \partial \R^{3}_{+})}\notag\\
&\leq C \|k\|_{H^{\frac{1}{2}}}\Big(\|(B_{g_{n}}-B_{g_{\R^{3}}})U_{\infty}\|_{L^{\frac{4}{3}}(B_{R,+}^{0}\cap \partial \R^{3}_{+})}+
\|(B_{g_{n}}-B_{g_{\R^{3}}})U_{\infty}\|_{L^{\frac{4}{3}}((\R^{3}_{+}\setminus B_{R,+}^{0})\cap \partial \R^{3}_{+})}\Big).\notag
\end{align}
The conclusion then follows from the fact that $g_{n}\to g_{\R^{3}}$ on $C^{\infty}(B_{R,+}^{0})$ hence
$$\|(B_{g_{n}}-B_{g_{\R^{3}}})U_{\infty}\|_{L^{\frac{4}{3}}(B_{R,+}^{0}\cap \partial \R^{3}_{+})}\to 0 \text{ as } n\to \infty$$ and $B_{g_{\R^{3}}}U_{\infty}=bU_{\infty}^{3} \in L^{\frac{4}{3}}(\partial \R^{3}_{+})$, therefore $\|(B_{g_{n}}-B_{g_{\R^{3}}})U_{\infty}\|_{L^{\frac{4}{3}}((\R^{3}_{+}\setminus B_{R,+}^{0})\cap \partial \R^{3}_{+})}\Big)\to 0$ as $R\to \infty$.

Lastly, we estimate $J_{3}$ as follows:

\begin{align}
|J_{3}|&\leq C\|k\|_{H^{\frac{1}{2}}(\partial M)}R_{n}^{-\frac{3}{2}}\Big(\int_{R_{n}^{-1}\mathcal{F}_{x_{n}}^{-1}(B_{2}(x_{0})\cap \partial M)\setminus R_{n}^{-1}\mathcal{F}_{x_{n}}^{-1}(B_{1}(x_{0})\cap \partial M)} |U_{\infty}|^{4} \ d\sigma_{\rho^{*}_{n}g}\Big)^{\frac{3}{4}}\notag\\
&\leq C\|k\|_{H^{\frac{1}{2}}(\partial M)} \|U_{\infty}\|_{L^{4}((B_{3R_{n}^{-1},+}^{0}\setminus B_{\frac{1}{2}R_{n}^{-1},+}^{0})\cap \partial \R^{3}_{+})}^{3}\notag\\
&\leq \|k\|_{H^{\frac{1}{2}}(\partial M)} o(1).\notag
\end{align}

Next, we consider $(\overline{u}_{n},\overline{\psi}_{n})$ and we fix again $f\in H^{1}(M)$. First, notice that
\begin{align}
d_{u}E^{b}(\overline{u}_{n},\overline{\psi}_{n})f&=\int_{M}fL_{g}\overline{u}_{n}dv_{g}-\int_{M}|\overline{\psi}_{n}|^{2}\overline{u}_{n} f\ dv_{g}+\int_{\partial M}\frac{\partial \overline{u}_{n}}{\partial \nu}f \ d\sigma_{g} -b\int_{\partial M}\overline{u}_{n}^{3}f \ d\sigma_{g}\notag\\
&=d_{u}E^{b}(u_{n},\psi_{n})f-d_{u}E^{b}(v_{n},\phi_{n})f+\int_{M}A_{n}f\ dv_{g}+\int_{\partial M} B_{n}f \ d\sigma_{g} ,\notag
\end{align}
where
$$A_{n}=|\psi_{n}|^{2}v_{n}-|\phi_{n}|^{2}u_{n}+2\langle \psi_{n},\phi_{n}\rangle (u_{n}-v_{n})$$
and
$$B_{n}=-b\Big(3u_{n}^{2}v_{n}-3u_{n}v_{n}^{2}\Big).$$
Now, since we already proved that $d_{u}E^{b}(u_{n},\psi_{n})\to 0$ and $d_{u}E^{b}(v_{n},\phi_{n})\to 0$, it is enough to show that $A_{n}\to 0$ in $H^{-1}$ and $B_{n}\to 0$ in $H^{-\frac{1}{2}}(\partial M)$. The convergence of the interior component $A_{n}$ is similar to the case of manifolds without boundary \cite{MV3}. Then we focus on the boundary component $B_{n}$, for which we get

\begin{align}
&\|B_{n}\|_{L^{\frac{4}{3}}(\partial M \setminus \partial B_{R_{n}R}(x_{n}))}\leq C\Big(\|u_{n}^{2}v_{n}\|_{L^{\frac{4}{3}}(\partial M \setminus \partial B_{R_{n}R}(x_{n}))}+\|u_{n}v_{n}^{2}\|_{L^{\frac{4}{3}}(\partial M \setminus \partial B_{R_{n}R}(x_{n}))}\Big)\notag\\
&\leq C\Big(\|u_{n}\|_{L^{4}(\partial M)}^{2}\|v_{n}\|_{L^{4}(\partial M \setminus \partial B_{R_{n}R}(x_{n}))}+ \|u_{n}\|_{L^{4}(\partial M)}\|v_{n}\|_{L^{4}(\partial M \setminus \partial B_{R_{n}R}(x_{n}))}^{2}\Big)\notag
\end{align}
But $\|u_{n}\|_{L^{4}(\partial M)}$ is uniformly bounded and
$$
\int_{ \partial M \setminus \partial B_{R_{n}R}(x_{n})}|v_{n}|^{4}\ d\sigma_{g} \leq C\int_{\left(B^{0}_{3R_{n}^{-1},+}\setminus B^{0}_{R,+}\right) \cap \partial \R^{3}_{+}}|U_{\infty}|^{4}\ d\sigma_{g_{n}}.$$
Since $U_{\infty}\in L^{4}(\partial \R^{3}_{+} )$ we have that $\|v_{n}\|_{L^{4}(\partial M \setminus \partial B_{R_{n}R}(x_{n}))}\to 0$ as $R\to \infty$. Thus
$$\|B_{n}\|_{L^{\frac{4}{3}}(\partial M \setminus \partial B_{R_{n}R}(x_{n}))} \to 0 \text{ as } R\to \infty.$$
Next, we consider $\|B_{n}\|_{L^{\frac{4}{3}}(B_{RR_{n}}(x_{n})\cap \partial M)}$. Observe that, by convexity of $t\mapsto t^{4/3}$, $t\geq0$, we have
\[
\vert u_n v_n\vert^{4/3}\leq C(\vert u_n\vert^{8/3}+\vert v_n\vert^{8/3})\,,
\]
since $\vert u_n v_n\vert\leq \vert u_n\vert^2+\vert v_n\vert^2$. Then, the H\"older inequality gives
\begin{align}
& \int_{B_{RR_{n}}(x_{n})\cap \partial M}|u_{n}v_{n}(v_{n}-u_{n})|^{\frac{4}{3}} \ d\sigma_{g} \notag\\
&\leq C(\|u_{n}\|_{L^{4}(\partial M)}^{\frac{8}{3}}+\|u_{n}\|_{L^{4}(\partial M)}^{\frac{8}{3}})\left(\int_{ B_{RR_{n}}(x_{n})\cap \partial M}|u_{n}-v_{n}|^{4} \ d\sigma_{g}\right)^{\frac{1}{3}}.\notag
\end{align}

But,
\begin{align}
\int_{ B_{RR_{n}}(x_{n})\cap \partial M}|u_{n}-v_{n}|^{4}\ d\sigma_{g}& \leq C \int_{B_{3R,+}^{0}\cap \partial \R^{3}_{+}}|R_{n}^{\frac{1}{2}}\rho_{n}^{*}u_{n}-U_{\infty}|^{4}\ d\sigma_{g_{n}}\notag\\
&\leq C \int_{B_{3R,+}^{0}\cap \partial \R^{3}_{+}}|U_{n}-U_{\infty}|^{4}\ d\sigma_{g_{\R^{3}}}.\notag
\end{align}
On the other hand, from Lemma \ref{lemlim} we have that $U_{n} \to U_{\infty}$ in  $H^{1}_{loc}(\R^{3}_{+})$, and in particular $$\int_{B_{3R,+}^{0}\cap \partial \R^{3}_{+}}|U_{n}-U_{\infty}|^{4}\ d\sigma_{g_{\R^{3}}}\to 0 \text{ as } n\to \infty.$$
Using the boundedness of $u_{n}$ and $v_{n}$ in $L^{4}(\partial M)$, we conclude that
$$\|B_{n}\|_{L^{\frac{4}{3}}(B_{RR_{n}}(x_{n})\cap \partial M)}\to 0 \text{ as } n\to \infty.$$
Thus, collecting the above estimates we find
$$\|B_{n}\|_{H^{-\frac{1}{2}}(\partial M)}\to 0,$$
concluding the proof for $d_{u}E^{b}$. The same computations also hold for $d_{\psi}E^{b}$.
\end{proof}

Now we estimate the energy, that is:
\begin{lemma}\label{lemma3}
We have
$$E^{b}(\overline{u}_{n},\overline{\psi}_{n})=E^{b}(u_{n},\psi_{n})-E_{\R^{3}_{+}}^{b}(U_{\infty},\Psi_{\infty})+o(1).$$
\end{lemma}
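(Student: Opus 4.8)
The plan is to exploit that $E^b$ is a polynomial of degree four on $\mathcal H$: substituting $u_n=\overline u_n+v_n$, $\psi_n=\overline\psi_n+\phi_n$ and expanding every term by multilinearity produces the exact algebraic identity $E^b(u_n,\psi_n)=E^b(\overline u_n,\overline\psi_n)+E^b(v_n,\phi_n)+\mathcal R_n$, where $\mathcal R_n$ gathers all \emph{mixed} contributions, i.e. integrals carrying at least one factor from the bubble $(v_n,\phi_n)$ and at least one from the remainder $(\overline u_n,\overline\psi_n)$: the two quadratic cross terms $\langle\overline u_n,v_n\rangle_{H^1}$ and $\int_M\langle D\overline\psi_n,\phi_n\rangle$, the seven mixed terms obtained by expanding $\int_M|\psi_n|^2u_n^2$, and the three boundary terms $\int_{\partial M}\overline u_n^3v_n$, $\int_{\partial M}\overline u_n^2v_n^2$, $\int_{\partial M}\overline u_nv_n^3$, each with the corresponding binomial constant. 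The lemma thus reduces to proving (i) $E^b(v_n,\phi_n)\to E^b_{\R^3_+}(U_\infty,\Psi_\infty)$ and (ii) $\mathcal R_n=o(1)$.

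For (i) I would use the conformal invariance of the action $E^b$ (Section \ref{sec:conformalcovariance}). Since $v_n,\phi_n$ are supported in $B_2(x_0)$, which for $n$ large lies inside the geodesic patch $B_3(x_n)$ conformally equivalent via $\rho_n$ to its rescaled Euclidean model $(B^0_{\cdot,+},g_n)$, conformal invariance gives $E^b(v_n,\phi_n)=E^b_{g_n}(V_n,\Phi_n)$ with $V_n:=R_n^{1/2}\rho_n^*v_n=(\rho_n^*\beta)U_\infty$ and $\Phi_n:=R_n\rho_n^*\phi_n=(\rho_n^*\beta)\Psi_\infty$; moreover $\rho_n^*\beta\equiv1$ on any fixed compact set for $n$ large (because $\beta\equiv1$ near $x_0$ and $\mathcal F_{x_n}(R_n\,\cdot\,)\to x_0$ locally uniformly). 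Since $g_n\to g_{\R^3_+}$ in $C^\infty_{loc}$ (so $R_{g_n}\to0$, $h_{g_n}\to0$) and $V_n\to U_\infty$, $\Phi_n\to\Psi_\infty$ in $H^1_{loc}$ resp. $H^{1/2}_{loc}$ by Lemma \ref{lemlim}, each term of $E^b_{g_n}(V_n,\Phi_n)$ converges, letting $n\to\infty$ and then $R\to\infty$, to the corresponding term of $E^b_{\R^3_+}(U_\infty,\Psi_\infty)$, the part coming from $\R^3_+\setminus B^0_{R,+}$ being bounded by the tails $\int_{\R^3_+\setminus B^0_{R,+}}(|\nabla U_\infty|^2+|U_\infty|^2|\Psi_\infty|^2+|\Psi_\infty|^3)+\int_{\partial\R^3_+\setminus B^0_{R,+}}|U_\infty|^4$, which vanish as $R\to\infty$ since $U_\infty\in D^1(\R^3_+)$, $\Psi_\infty\in D^{1/2}(\Sigma\R^3_+)$ and $\int_{\R^3_+}|\nabla U_\infty|^2<\infty$ (Lemma \ref{lemlim}).

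For (ii) I would first reduce $\mathcal R_n$ using Lemma \ref{lemma2}. Since $(\overline u_n,\overline\psi_n)$ is bounded in $\mathcal H$ and $dE^b(v_n,\phi_n)\to0$, pairing the (approximate) equations for $(v_n,\phi_n)$ against $\overline u_n$ and $\overline\psi_n$ gives $\langle\overline u_n,v_n\rangle_{H^1}=\int_M|\phi_n|^2v_n\overline u_n+b\int_{\partial M}v_n^3\overline u_n+o(1)$ and $\int_M\langle D\overline\psi_n,\phi_n\rangle=\int_M v_n^2\langle\phi_n,\overline\psi_n\rangle+o(1)$. Substituting, the term $b\int_{\partial M}v_n^3\overline u_n$ cancels $-b\int_{\partial M}\overline u_nv_n^3$, and $\mathcal R_n$ becomes, up to $o(1)$, a fixed linear combination of the mixed integrals $\int_M|\phi_n|^2v_n\overline u_n$, $\int_M v_n^2\langle\phi_n,\overline\psi_n\rangle$, $\int_M|\overline u_n|^2|\phi_n|^2$, $\int_M|\overline u_n|^2\langle\overline\psi_n,\phi_n\rangle$, $\int_M|v_n|^2|\overline\psi_n|^2$, $\int_M\overline u_nv_n|\overline\psi_n|^2$, $\int_M\overline u_nv_n\langle\overline\psi_n,\phi_n\rangle$, $\int_{\partial M}\overline u_n^3v_n$ and $\int_{\partial M}\overline u_n^2v_n^2$. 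Each of these carries at least one remainder factor and at least one bubble factor, and I would estimate it by splitting over $B_{RR_n}(x_n)$ and its complement. On $B_{RR_n}(x_n)$, rescaling by $\rho_n$ turns the bubble factors into $(\rho_n^*\beta)U_\infty$, $(\rho_n^*\beta)\Psi_\infty$, bounded in $H^1(B^0_{R,+})$ resp. $H^{1/2}(B^0_{R,+})$, and the remainder factors into $\overline U_n:=U_n-(\rho_n^*\beta)U_\infty$ and $\overline\Psi_n:=\Psi_n-(\rho_n^*\beta)\Psi_\infty$, which tend to $0$ strongly in $H^1(B^0_{R,+})$ resp. $H^{1/2}(B^0_{R,+})$ by Lemma \ref{lemlim}, hence in $L^6(B^0_{R,+})$, $L^4(B^0_{R,+}\cap\partial\R^3_+)$ resp. $L^3(B^0_{R,+})$ by the Sobolev and trace embeddings; H\"older's inequality then makes the $B_{RR_n}(x_n)$-part $o(1)$ for each fixed $R$. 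On $M\setminus B_{RR_n}(x_n)$ the remainder factors remain bounded in $L^6(M)$, $L^3(\Sigma M)$, $L^4(\partial M)$ while, after rescaling, $\int_{M\setminus B_{RR_n}(x_n)}|v_n|^6$, $\int_{\partial M\setminus B_{RR_n}(x_n)}|v_n|^4$ and $\int_{M\setminus B_{RR_n}(x_n)}|\phi_n|^3$ are bounded by the tails of $U_\infty$, $\Psi_\infty$ over $\R^3_+\setminus B^0_{R,+}$ up to $o(1)$, vanishing as $R\to\infty$; H\"older again makes the complement-part $o_R(1)$ uniformly in $n$. Letting $n\to\infty$ then $R\to\infty$ gives $\mathcal R_n=o(1)$, and (i)+(ii) conclude.

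The main difficulty will be step (ii) for the mixed terms in which both a bubble factor and a remainder factor appear with critical weight — concretely $\int_{\partial M}\overline u_n^2v_n^2$, $\int_M|\overline u_n|^2|\phi_n|^2$ and $\int_M|v_n|^2|\overline\psi_n|^2$: here the weak convergence $\overline u_n,\overline\psi_n\rightharpoonup0$ of Lemma \ref{lemma1} is by itself insufficient, and one must combine the local \emph{strong} convergences $\overline U_n\to0$, $\overline\Psi_n\to0$ from Lemma \ref{lemlim} with the tail decay of $U_\infty\in D^1$, $\Psi_\infty\in D^{1/2}$, organized through the two-region splitting and keeping every H\"older exponent subcritical when distributing the triple and quadruple products.
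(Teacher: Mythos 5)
Your proposal is correct, and it uses the same analytic inputs as the paper --- the local strong convergence of Lemma \ref{lemlim}, the tail decay of $(U_\infty,\Psi_\infty)$, Lemma \ref{lemma1} and Lemma \ref{lemma2} --- but the decomposition is genuinely different. The paper expands $E^b(\overline{u}_n,\overline{\psi}_n)=E^b(u_n-v_n,\psi_n-\phi_n)$ and writes it as $E^b(u_n,\psi_n)+E^b(v_n,\phi_n)-dE^b(v_n,\phi_n)(u_n,\psi_n)+\tilde A_n+\tilde B_n$; there the mixed terms $\tilde A_n,\tilde B_n$ do \emph{not} vanish but converge to $\int_{\R^3_+}U_\infty^2|\Psi_\infty|^2\,dv_{g_{\R^3}}$ and $-\frac{b}{2}\int_{\partial\R^3_+}U_\infty^4\,d\sigma_{g_{\R^3}}$, which are recognized as $-2E^b_{\R^3_+}(U_\infty,\Psi_\infty)$ through the identities satisfied by solutions of \eqref{eqR3}, and the statement follows after proving $E^b(v_n,\phi_n)\to E^b_{\R^3_+}(U_\infty,\Psi_\infty)$ (the estimates $H_1$, $H_2$). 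You instead expand $E^b(u_n,\psi_n)=E^b(\overline{u}_n+v_n,\overline{\psi}_n+\phi_n)$, absorb the two quadratic cross terms via the approximate Euler--Lagrange equations of the bubble (Lemma \ref{lemma2} paired against the bounded remainder), and show that \emph{all} remaining mixed integrals vanish by the two-region splitting (concentration ball, where the rescaled remainder $\overline U_n,\overline\Psi_n$ is strongly small, versus its complement, where the bubble has small critical norms); the term $-E^b_{\R^3_+}(U_\infty,\Psi_\infty)$ then comes solely from $E^b(v_n,\phi_n)$, which you compute by conformal invariance and $g_n\to g_{\R^3}$ in $C^\infty_{loc}$ rather than by the paper's term-by-term rescaling. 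Your route buys a cleaner algebra (no need to identify the limits of $\tilde A_n,\tilde B_n$ or to invoke the solution identities), at the price of estimating a longer list of mixed integrals, each of which, as you verified, closes under H\"older with one strongly small factor. One small point for the write-up: in your step (i) the contributions where derivatives fall on $\rho_n^*\beta$, as well as the curvature term $\int R_{g_n}V_n^2\,dv_{g_n}$, live at scale $|x|\sim R_n^{-1}$ and are controlled by the $L^6$-tail of $U_\infty$ (and the $L^3$-tail of $\Psi_\infty$), so $\int_{\R^3_+\setminus B^0_{R,+}}|U_\infty|^6\,dv_{g_{\R^3}}$ should be added to your list of tail quantities; this is the same kind of estimate the paper performs for the terms $I_1$--$I_3$, $J_1$--$J_3$ and $H_1$.
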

\begin{proof}
We have
\begin{align}
E^{b}(\overline{u}_{n},\overline{\psi}_{n})&=\frac{1}{2}\Big(\int_{M}| \nabla (u_{n}-v_{n})|^{2}+\frac{R}{8}(u_{n}-v_{n})^{2}+\langle D_{g}(\psi_{n}-\phi_{n}),\psi_{n}-\phi_{n}\rangle \notag\\
&\qquad-|\psi_{n}-\phi_{n}|^{2}|u_{n}-v_{n}|^{2}\ dv_{g}\Big)-\frac{b}{4}\int_{\partial M}(u_{n}-v_{n})^{4}\ d\sigma_{g}\notag\\
&=E^{b}(u_{n},\psi_{n})+E^{b}(v_{n},\phi_{n})-dE^{b}(v_{n},\phi_{n})(u_{n},\psi_{n})+\tilde{A}_{n}+\tilde{B}_{n},\notag
\end{align}
where
$$\tilde{A}_{n}=-\frac{1}{2}\left(\int_{M}|u_{n}|^{2}|\phi_{n}|^{2}+|v_{n}|^{2}|\psi_{n}|^{2}-2|u_{n}|^{2}\langle \psi_{n},\phi_{n}\rangle -2|\psi_{n}|^{2}u_{n}v_{n}+4u_{n}v_{n}\langle \psi_{n},\phi_{n}\rangle\ dv_{g}\right),$$
and
$$\tilde{B}_{n}=-\frac{b}{4}\int_{\partial M}6u_{n}^{2}v_{n}^{2}-4v_{n}u_{n}^{3}\ d\sigma_{g}.$$
We first notice that since $(u_{n},\psi_{n})$ is bounded in $H^{1}(M)\times H^{\frac{1}{2}}_{+}(\Sigma M)$, from Lemma \ref{lemsplit}, we have that $$dE^{b}(v_{n},\phi_{n})(u_{n},\psi_{n})\to 0 .$$

Let us start by estimating $\tilde{B}_{n}$.

\begin{align}
\tilde{B}_{n}&=-b\int_{\partial M}u_{n}v_{n}^{2}(u_{n}-v_{n})\ d\sigma_{g}-\frac{b}{2}\int_{\partial M}u_{n}^{2}v_{n}^{2}\ d \sigma_{g}\notag\\
&=L_{1}+L_{2}\notag
\end{align}

\begin{align}
|L_{1}|&=b\Big(\int_{B_{RR_{n}}(x_{n})\cap\partial M}u_{n}v_{n}^{2}(u_{n}-v_{n}) d\sigma_{g}+\int_{(M\setminus B_{RR_{n}}(x_{n}))\cap\partial M}u_{n}v_{n}^{2}(u_{n}-v_{n}) d\sigma_{g}\notag\\
&\leq C\Big( \|v_{n}\|_{L^{4}(\partial M)}^{2}\|u_{n}\|_{L^{4}(\partial M)}\|u_{n}-v_{n}\|_{L^{4}(B_{RR_{n}}(x_{n})\cap\partial M)}\notag\\
&\qquad+\|u_{n}\|_{L^{4}(M)}^{2}\|v_{n}\|^{2}_{L^{4}(\partial M \setminus B_{RR_{n}}(x_{n}))}+\|u_{n}\|_{L^{4}(M)}\|v_{n}\|_{L^{4}(\partial M \setminus B_{RR_{n}}(x_{n}))}^{3}\Big).\notag
\end{align}
But,
$$\int_{B_{RR_{n}}(x_{n})\cap\partial M}(u_{n}-v_{n})^{4} \ d\sigma_{g}\leq \int_{B_{3R,+}^{0}\cap \partial \R^{3}_{+}}(U_{n}-U_{\infty})^{4}\ d\sigma_{g_{\R^{3}}}.
$$
and since $U_{n}\to U_{\infty}$ in $H^{1}_{loc}(\R^{3}_{+})$, we have that
\begin{equation}\label{eq:B1}
\|u_{n}-v_{n}\|_{L^{4}(B_{RR_{n}}(x_{n})\cap \partial M)}\to 0, \text{ as } n\to \infty.
\end{equation}
Similarly, we have
$$\int_{\partial M}|v_{n}|^{4}\ d\sigma_{g}=\int_{\partial \R^{3}_{+}}\rho_{n}^{*}\beta^{4}U_{\infty}^{4}d\sigma_{g_{n}},$$
which clearly converges to $\|U_{\infty}\|_{L^{4}(\partial \R^{3}_{+})}^{4}$. Therefore,
\begin{equation}\label{eq:B2}
\|v_{n}\|_{L^{4}(\partial M)}\to \|U_{\infty}\|_{L^{4}(\partial \R^{3}_{+})}.
\end{equation}
Combining $\eqref{eq:B1}$, $\eqref{eq:B2}$ and the uniform boundedness of $\|u_{n}\|_{L^{4}(\partial M)}$, we conclude that
$$\|v_{n}\|_{L^{4}(\partial M)}^{2}\|u_{n}\|_{L^{4}(\partial M)}\|u_{n}-v_{n}\|_{L^{4}(B_{RR_{n}}(x_{n})\cap \partial M)}\to 0 \text{ as } n\to \infty.$$

Next, we estimate $\|v_{n}\|_{L^{4}(\partial M \setminus B_{RR_{n}}(x_{n}))}$. We have,
\begin{align}
\int_{\partial M\setminus B_{RR_{n}}(x_{n})}v_{n}^{4} \ d\sigma_{g} &\leq C \int_{\Big(B_{3R_{n}^{-1},+}^{0}\setminus B_{\frac{R}{2},+}^{0}\Big)\cap \partial \R^{3}_{+}}U_{\infty}^{4} d\sigma_{g_{n}}\notag\\
&\leq C\int_{\partial \R^{3}_{+}\setminus B_{\frac{R}{2},+}^{0}}U_{\infty}^{4} d\sigma_{g_{\R^{3}}}.\notag
\end{align}
Therefore, since $U_{\infty}\in L^{4}(\partial \R^{3}_{+})$, we have that
\begin{equation}
\|v_{n}\|_{L^{4}(\partial M \setminus B_{RR_{n}}(x_{n}))} \to 0 \text{ as } R\to \infty.
\end{equation}
And thus, $L_{1}=o(1)$. Using a similar procedure, we see that
$$L_{2}\to -\frac{b}{2}\int_{\partial \R^{3}_{+}}U_{\infty}^{4} d \sigma_{g_{\R^{3}_{+}}}.$$

For the quantity $\tilde{A}_{n}$ one can show that
$$\tilde{A}_{n}\to \int_{\R^{3}_{+}}U_{\infty}^{2}|\Psi_{\infty}|^{2}\ dv_{g_{\R^{3}}}.$$
The proof is similar to the case of manifolds with no boundary, hence we omit it. Therefore, we have
$$E^{b}(\overline{u}_{n},\overline{\psi}_{n})=E^{b}(u_{n},\psi_{n})+E^{b}(v_{n},\phi_{n})-2E^{b}_{\R^{3}_{+}}(U_{\infty},\Psi_{\infty})+o(1) .$$
Now we estimate $E^{b}(v_{n},\phi_{n})$. Indeed,
$$E^{b}(v_{n},\phi_{n})=\frac{1}{2}\Big(\int_{M}v_{n}L_{g}v_{n} +\langle D\phi_{n}, \phi_{n}\rangle -|v_{n}|^{2}|\phi_{n}|^{2}\ dv_{g}\Big)-\frac{1}{2}\int_{\partial M} v_{n}B_{g}v_{n}-\frac{b}{2}v_{n}^{4}\ d\sigma_{g}.$$
Again, following \cite{MV3}, we see that
$$\int_{M}v_{n}L_{g}v_{n} +\langle D\phi_{n}, \phi_{n}\rangle -|v_{n}|^{2}|\phi_{n}|^{2}\ dv_{g}\to \int_{\R^{3}_{+}}U_{\infty}L_{g_{\R^{3}}}U_{\infty}+\langle D_{g_{\R^{3}}} \Psi_{\infty},\Psi_{\infty}\rangle -|U_{\infty}|^{2}|\Psi_{\infty}|^{2}\ dv_{g_{\R^{3}}}.$$
Therefore, we focus on the boundary term. But as we saw above in $(\ref{eq:B2})$
$$\int_{\partial M}v_{n}^{4} d\sigma_{g}=\int_{\partial \R^{3}_{+}}U_{\infty}^{4}\ d\sigma_{g_{\R^{3}}}+o(1).$$
So we estimate
\begin{align}
\int_{\partial M} v_{n}B_{g}v_{n}\ d\sigma_{g}&=R_{n}^{-1}\int_{\partial M}\beta\frac{\partial \beta}{\partial \nu}\sigma_{n}^{*}(U_{\infty})^{2}\ d\sigma_{g}+R_{n}^{-2}\int_{\partial M}\beta^{2} \sigma_{n}^{*}(U_{\infty} B_{g_{n}}U_{\infty})\ d\sigma_{g}\notag\\
&=H_{1}+H_{2}.
\end{align}
Now,
\begin{align}
|H_{1}|&\leq CR_{n} \int_{\Big( B_{3R_{n}^{-1},+}^{0}\setminus B_{\frac{1}{2}R_{n},+}^{0}\Big)\cap \partial \R^{3}_{+}}U_{\infty}^{2}\ d\sigma_{g_{\R^{3}}}\notag\\
&\leq C \Big(\int_{\partial \R^{3}_{+}\setminus B_{\frac{1}{2}R_{n},+}^{0}}U_{\infty}^{4}\ d\sigma_{g_{\R^{3}}}\Big)^{\frac{1}{2}}\notag\\
&=o(1)\notag
\end{align}
Next,
\begin{align}
H_{2}&=R_{n}^{-2}\Big(\int_{\partial M}\beta^{2}\sigma_{n}^{*}(U_{\infty} B_{\R^{3}}U_{\infty})\ d\sigma_{g}+\int_{\partial M\cap B_{RR_{n}}(x_{n})}\beta^{2}\sigma_{n}^{*}(U_{\infty}(B_{g_{n}}-B_{g_{\R^{3}}})U_{\infty})\ d\sigma_{g}\notag\\
&\quad+\int_{\partial M\setminus B_{RR_{n}}(x_{n})}\beta^{2}\sigma_{n}^{*}(U_{\infty}(B_{g_{n}}-B_{g_{\R^{3}}})U_{\infty})\ d\sigma_{g}\Big).
\end{align}

But,
$$R_{n}^{-2}\int_{\partial M}\beta^{2}\sigma^{*}_{n}U_{\infty}B_{g_{\R^{3}}}U_{\infty}\ d\sigma_{g}=b\int_{\partial \R^{3}_{+}}\rho^{*}_{n}\beta^{2} U_{\infty}^{4}\ d\sigma_{g_{n}}\to b\int_{\partial \R^{3}_{+}}|U_{\infty}|^{4}\ d\sigma_{g_{\R^{3}}}.$$
and
\begin{align}
&\frac{1}{R_{n}^{2}}\left\vert\int_{\partial M\cap B_{RR_{n}}(x_{n})}\beta^{2}\sigma_{n}^{*}(U_{\infty}(B_{g_{n}}-B_{g_{\R^{3}}})U_{\infty}) d\sigma_{g}+\int_{\partial M\setminus B_{RR_{n}}(x_{n})}\beta^{2}\sigma_{n}^{*}(U_{\infty}(B_{g_{n}}-B_{g_{\R^{3}}})U_{\infty}) d\sigma_{g}\right\vert\notag\\
&\leq C\Big(\int_{B_{3R,+}^{0}\cap \partial \R^{3}_{0}}|U_{\infty}(B_{g_{n}}-B_{g_{\R^{3}}})U_{\infty}|\ d\sigma_{g_{\R^{3}}}+\int_{\partial \R^{3}_{+}\setminus B_{3R_{n}^{-1},+}^{0}}|U_{\infty}||(B_{g_{n}}-B_{g_{\R^{3}}})U_{\infty})|\ d\sigma_{g_{\R^{3}}}\notag\\
&\leq C\|U_{\infty}\|_{L^{4}(\partial \R^{3}_{+})}\Big(\|(B_{g_{n}}-B_{g_{\R^{3}}})U_{\infty}\|_{L^{\frac{4}{3}}(B_{3R,+}^{0}\cap \partial \R^{3}_{+})}+\|(B_{g_{n}}-B_{g_{\R^{3}}})U_{\infty}\|_{L^{\frac{4}{3}}(\partial \R^{3}_{+}\setminus B_{3R,+}^{0})}\Big).
\end{align}
And one can see as in estimate $J_{2}$, that since $\|(B_{g_{n}}-B_{g_{\R^{3}}})U_{\infty}\|_{L^{\frac{4}{3}}(\partial \R^{3}_{+}\setminus B_{3R,+}^{0})}\to 0$ as $R\to \infty$ and $\|(B_{g_{n}}-B_{g_{\R^{3}}})U_{\infty}\|_{L^{\frac{4}{3}}(\partial \R^{3}_{+}\setminus B_{3R,+}^{0})}\to 0$ as $n\to \infty$, we have
$$H_{2}=\int_{\partial \R^{3}_{0}}U_{\infty}^{4} d\sigma_{g_{\R^{3}}}+o(1).$$

Combining the previous estimates, we see that
$$E^{b}(v_{n},\phi_{n})=E^{b}_{\R^{3}_{+}}(U_{\infty}, \Psi_{\infty})+o(1)\,,$$
and thus
$$E^{b}(\overline{u}_{n},\overline{\psi}_{n})=E^{b}(u_{n},\psi_{n})-E^{b}_{\R^{3}_{+}}(U_{\infty},\Psi_{\infty})+o(1).$$

\end{proof}


\noindent
Now we will prove the following energy lower bound for solutions in $\R^3_{+}$.
\begin{proposition}
There exists $c_{3}>0$ such that if $(U,\Psi)\in D^{1}(\R^{3}_{+})\times D^{\frac{1}{2}}(\Sigma \R^{3}_{+})$ is a non trivial solution of
$$\left\{\begin{array}{ll}
-\Delta_{g_{\R^3}} U=|\Psi|^{2}U \\
& \text{ on } \R^{3}_+ .\\
D_{g_{\R^3}}\Psi=|U|^{2}\Psi\\
&\\
B_{g_{\R^{3}}}U=bU^{3}\\
&\text{ on } \partial \R^{3}_{+}.\\
\mathbb{B}_{g_{\R^{3}}}^{\pm}\Psi=0
\end{array}
\right.
$$
Then
\begin{equation}\label{eq:lowboundboundary}
E_{\R^{3_{+}}}(U,\Psi)\geq  c_{3}\,.
\end{equation}
\end{proposition}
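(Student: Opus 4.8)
The plan is to prove a uniform positive lower bound on the energy of nontrivial solutions by combining the Pohozaev-type / Nehari-type identities satisfied by solutions of the system with the Sobolev and trace inequalities on $\R^3_+$. First I would record the two basic integral identities obtained by testing the equations against the solution itself: pairing $-\Delta U = |\Psi|^2 U$ with $U$ and using the boundary condition $B_{g_{\R^3}}U = bU^3$ gives
\[
\int_{\R^3_+}|\nabla U|^2\,dv = \int_{\R^3_+} U^2|\Psi|^2\,dv + b\int_{\partial\R^3_+} U^4\,d\sigma\,,
\]
while pairing $D_{g_{\R^3}}\Psi = |U|^2\Psi$ with $\Psi$ (the chiral bag boundary term vanishing because $\mathbb{B}^\pm_{g_{\R^3}}\Psi=0$) gives
\[
\int_{\R^3_+}\langle D\Psi,\Psi\rangle\,dv = \int_{\R^3_+} U^2|\Psi|^2\,dv\,.
\]
Denoting $A := \int_{\R^3_+} U^2|\Psi|^2\,dv$ and $B := b\int_{\partial\R^3_+}U^4\,d\sigma$, these identities show that on a solution $E_{\R^3_+}(U,\Psi) = \tfrac12(A + B - A) - \tfrac{b}{4}\cdot\frac{B}{b}\cdot\ldots$; more precisely, plugging into \eqref{eq:bubbleaction},
\[
E^{b}_{\R^3_+}(U,\Psi) = \frac12\Big(\int|\nabla U|^2 + \int\langle D\Psi,\Psi\rangle - A\Big) - \frac{b}{4}\int_{\partial}U^4 = \frac12 A + \frac{B}{2} - \frac{B}{4} = \frac12 A + \frac14 B\,,
\]
which in particular is $\geq \tfrac12 A \geq 0$; since the statement concerns $b=0$ (the relevant regime for the rest of Section~\ref{sec:groundstates}, where $E_{\R^3_+} = E^0_{\R^3_+}$) the quantity to bound from below is simply $\tfrac12 A$.

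Next I would show $A$ cannot be small on a nontrivial solution. From the first identity with $b=0$, $\|\nabla U\|_{L^2}^2 = A$, and from the second, $\int\langle D\Psi,\Psi\rangle = A$. Estimate $A$ by Hölder: $A = \int U^2|\Psi|^2 \le \|U\|_{L^6}^2\|\Psi\|_{L^3}^2$. By the Sobolev inequality on $\R^3_+$, $\|U\|_{L^6}^2 \le C_S\|\nabla U\|_{L^2}^2 = C_S A$; and by the $H^{1/2}$-Sobolev embedding for spinors together with the Dirac identity, $\|\Psi\|_{L^3}^2 \le C\||D|^{1/2}\Psi\|_{L^2}^2 = C\int\langle D\Psi,\Psi\rangle = CA$ (here one uses that $\Psi\in D^{1/2}$ satisfies the chiral bag condition, so no boundary contribution appears and $\langle D\Psi,\Psi\rangle = \||D|^{1/2}\Psi\|^2$ up to the sign of the spectral part; one splits $\Psi = \Psi^+ + \Psi^-$ and runs the estimate on each component as in Proposition~\ref{propPS}). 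Combining, $A \le C_S C\,A^2$, hence either $A = 0$ or $A \ge (C_S C)^{-1} =: 2c_3 > 0$. If $A = 0$ then $\nabla U \equiv 0$, so $U$ is constant, and being in $L^6(\R^3_+)$ it must vanish; then the Dirac equation reads $D\Psi = 0$ with $\mathbb{B}^\pm\Psi = 0$, and since $\int\langle D\Psi,\Psi\rangle = 0$ forces $\Psi^\pm = 0$, we get $\Psi \equiv 0$, contradicting nontriviality. Therefore $A \ge 2c_3$ and $E_{\R^3_+}(U,\Psi) \ge \tfrac12 A \ge c_3$.

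The main obstacle is the careful justification of the spinorial estimate $\|\Psi\|_{L^3}^2 \le C\int_{\R^3_+}\langle D\Psi,\Psi\rangle\,dv$ on the half-space under chiral bag boundary conditions: on $\R^3_+$ one must be sure that the relevant $H^{1/2}$-type norm controls the $L^3$ norm with a constant independent of the solution, and that the boundary term in the integration by parts for the Dirac operator genuinely vanishes under $\mathbb{B}^\pm_{g_{\R^3}}\Psi = 0$ (this is where the self-adjointness of $D_g$ under chiral bag conditions, recalled in Section~\ref{sec:prop}, is used). One also has to handle the splitting into $\Psi^\pm$ so that $\int\langle D\Psi,\Psi\rangle = \|\Psi^+\|_{1/2}^2 - \|\Psi^-\|_{1/2}^2$ is replaced by a genuine estimate; testing the Dirac equation against $\Psi^\pm$ separately, exactly as in the proof of Proposition~\ref{propPS}, circumvents this and yields $\|\Psi^\pm\|_{1/2}^2 \le CA^{1/2}\|U\|_{L^6}\|\Psi^\pm\|_{L^3} \le C'A^{3/2}\|\Psi\|_{1/2}$, whence $\|\Psi\|_{1/2}^2 \le C''A^{3/2}$, giving the same conclusion $A \le C A^{\,?}$ with an exponent $>1$ and hence the dichotomy. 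The rest is routine.
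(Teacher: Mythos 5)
Your starting identity $E^{b}_{\R^3_+}(U,\Psi)=\tfrac12 A+\tfrac14 B$, with $A=\int_{\R^3_+}U^2|\Psi|^2\,dv$ and $B=b\int_{\partial\R^3_+}U^4\,d\sigma$, is correct and is implicitly also the paper's starting point, but from there your argument has a genuine gap. The proposition is stated, and used in the bubbling analysis of Theorem \ref{first}, for arbitrary $b\geq 0$; your reduction ``the statement concerns $b=0$'' is a misreading, and it is not harmless. Indeed, in the paragraph immediately preceding this proposition the paper points out that for $b>0$ the limit system admits nontrivial solutions of the form $(U,0)$, i.e. harmonic $U$ with $\partial_\nu U=bU^3$ (Escobar-type boundary bubbles). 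For these $A=0$, so your dichotomy ``$A=0\Rightarrow(U,\Psi)$ trivial'' fails and the bound $E\geq\tfrac12 A$ gives nothing; to cover this case one must bound $B$ from below, e.g. via the trace inequality $\|U\|^2_{L^4(\partial\R^3_+)}\leq C\|\nabla U\|^2_{L^2(\R^3_+)}$ combined with $\|\nabla U\|^2_{L^2}=A+B$, and the same relation $\|\nabla U\|^2_{L^2}=A+B$ (not $=A$) must be used in your scalar Sobolev step when $b>0$. In addition, the analytic inputs you yourself flag as ``the main obstacle'' are left unproved: the $L^3$ control of $\Psi$ by a $D^{1/2}$-type norm compatible with the chiral bag condition on the noncompact half-space, the vanishing of the boundary term in the Dirac integration by parts there, and the splitting $\Psi=\Psi^++\Psi^-$ you invoke; in particular the step ``$D\Psi=0$ and $\int\langle D\Psi,\Psi\rangle=0$ forces $\Psi^\pm=0$'' relies on a discrete spectral decomposition that does not exist on $\R^3_+$ (one would instead reflect across the boundary and use a Liouville-type argument for $L^3$ harmonic spinors).

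The paper's proof avoids all of these issues by conformal covariance: it pulls $(U,\Psi)$ back to the hemisphere via stereographic projection, where $E_{\R^3_+}(U,\Psi)=\tfrac12\int_{\Sph^3_+}u^2|\psi|^2\,dv_{g_0}+\tfrac b4\int_{\partial\Sph^3_+}u^4\,d\sigma_{g_0}$, and then bounds $\int_{\Sph^3_+}u^2|\psi|^2$ from below by $Y(\Sph^3_+,\partial\Sph^3_+,[g_0])\,\lambda^{+}_{\text{CHI}}(\Sph^3_+,\partial\Sph^3_+,[g_0])$, exactly as in Lemma \ref{lem:lowerbubbles}, via the variational characterizations \eqref{eq:yamabeinv} and \eqref{eq:chiralbag} plus H\"older, arriving at the explicit constant $c_3=\tfrac14\,Y\,\lambda^{+}_{\text{CHI}}$. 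This transplants the problem to a compact setting where the chiral bag framework of Section \ref{sec:prop} (discrete spectrum, Sobolev embeddings, boundary terms) is already available, and it produces a constant tied to the sharp threshold, which a generic-Sobolev-constant dichotomy cannot give. Your strategy could likely be repaired for $b=0$, but as written it does not establish the proposition in the generality in which it is stated and used.
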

\begin{proof}

We let $(u,\psi)$ the pull-back of $(U,\Psi)$ by stereographic projection to the hemisphere $\mathbb S^{3}_{+}$. Then we have
$$E_{\R^{3}_{+}}(U,\Psi)=E_{\mathbb S^{3}_{+}}(u,\psi)=\frac{1}{2}\int_{\mathbb S^{3}_{+}}|u|^{2}|\psi|^{2}\ dv_{g_{0}}+\frac{b}{4}\int_{\partial \mathbb S^{3}_{+}}u^{4}\ d\sigma_{g_{0}}.$$
Also, we have
$$\lambda^{+}_{CHI}( \mathbb S^{3}_{+},\partial  \mathbb S^{3}_{+},[g_{0}])\leq \Big(\int_{ \mathbb S^{3}_{+}}u^{6}\ dv_{g_{0}}\Big)^{\frac{1}{3}}.$$
Hence, since
$$Y(S^{3}_{+},\partial  \mathbb S^{3}_{+},[g_{0}])\Big(\int_{ \mathbb S^{3}_{+}}u^{6}\ dv_{g_{0}}\Big)^{\frac{1}{3}}\leq \int_{ \mathbb S^{3}_{+}}|\nabla u|^{2} +\frac{R}{8}u^{2}\ dv_{g_{0}},$$
we have
$$\lambda^{+}_{CHI}( \mathbb S^{3}_{+},\partial   \mathbb S^{3}_{+},[g_{0}])Y( \mathbb S^{3}_{+},\partial  \mathbb S^{3}_{+},[g_{0}])\leq \int_{ \mathbb S^{3}_{+}}|\nabla u|^{2} +\frac{R}{8}u^{2}\ dv_{g_{0}}.$$
On the other hand,
$$\int_{ \mathbb S^{3}_{+}}|\nabla u|^{2} +\frac{R}{8}u^{2}\ dv_{g_{0}}=2E^{b}_{ \mathbb S^{3}_{+}}(u,\psi)+\frac{b}{2}\int_{\partial  \mathbb S^{3}_{+}}u^{4}d\sigma_{g_{0}}\leq 4E^{b}_{ \mathbb S^{3}_{+}}(u,\psi)$$
Thus,
$$E^{b}_{\R^{3}_{+}}(U,\Psi)\geq \frac{1}{4}\lambda^{+}_{CHI}( \mathbb S^{3}_{+},\partial  \mathbb S^{3}_{+},[g_{0}])Y( \mathbb S^{3}_{+},\partial  \mathbb S^{3}_{+},[g_{0}])=c_{3}.$$

\end{proof}

\noindent
We are now in a position to prove the desired result for Palais-Smale sequences.
\begin{proof}  \emph{of Theorem \eqref{first} }\\
The process described in the previous propositions can be iterated for the boundary bubbles, $\ell$ times, for $(\overline{u}_{n},\overline{\psi}_{n})$. Similarly, as described in \cite{MV3}, one can extact $m$ interior bubbles. Thus we have
$$E^{b}(u_{n},\psi_{n})=E^{b}(u_{\infty},\psi_{\infty})+\sum_{k=1}^{m}E_{\R^{3}}(\tilde{U}_\infty^{k},\tilde{\Psi}_{\infty}^{k})+\sum_{k=1}^{\ell}E_{\R^{3}_{+}}^{b}(U_{\infty}^{k},\Psi_{\infty}^{k})+ o(1),$$
where $(U_\infty^{k},\Psi_\infty^{k})$ are solutions to equations $(\ref{eq:blowup})$ in $\mathbb{R}^{3}_{+}$ and $(\tilde{U}_\infty^{k},\tilde{\Psi}_{\infty}^{k})$ are solutions of $(\ref{eq:blowupint})$ in $\R^{3}$. Notice that an uniform lower bound for the energy of interior bubbles holds, analogous to \eqref{eq:lowboundboundary}.

Then one has (see Remark \ref{rem:threshold})
$$\sum_{k=1}^{m}E_{\R^{3}}(\tilde{U}_\infty^{k},\tilde{\Psi}_{\infty}^{k})+\sum_{k=1}^{\ell}E^{b}_{\R^{3}_{+}}(U_{\infty}^{k},\Psi_{\infty}^{k})\geq m\tilde c_3+\ell c_{3} \geq (m+l)c_3\,,$$
so that we stop the process at least when $c-(m+\ell) c_{3}<\frac{\epsilon_{0}}{2}$. Then combining the previous results and those from \cite{MV3}, we have the existence of $m$ sequences $\tilde{x}_{n}^{1},\cdots \tilde{x}_{n}^{m}$ such that $\tilde{x}_{n}^{k}\to \tilde{x}^{k}\in \overset{\circ}{M}$, $m$ sequences of real numbers $\tilde{R}_{n}^{1},\cdots, \tilde{R}_{n}^{m}$ converging to zero, and $\ell$ sequences $x_{n}^{1},\cdots x_{n}^{\ell}$ such that $x_{n}^{k}\to x^{k}\in \partial M$, $\ell$ sequences of real numbers $R_{n}^{1},\cdots, R_{n}^{k}$ such that
$$u_{n}=u_{\infty}+\tilde{v}_{n}^{1}+\cdots +\tilde{v}_{n}^{m}+v_{n}^{1}+\cdots +v_{n}^{\ell}+o(1) \text{ in } H^{1}(M),$$
$$\psi_{n}=\psi_{\infty}+\tilde{\phi}_{n}^{1}+\cdots +\tilde{\phi}_{n}^{m}+\phi_{n}^{1}+\cdots +\phi_{n}^{\ell}+o(1) \text{ in } H^{\frac{1}{2}}(\Sigma M),$$
where
$$
\tilde{v}_{n}^{k}=(\tilde{R}_{n}^{k})^{-\frac{1}{2}}\tilde{\beta}_{k}\tilde{\sigma}_{n,k}^{*}(\tilde{U}_\infty^{k}) ,$$
$$\tilde{\phi}_{n}^{k}=(\tilde{R}_{n}^{k})^{-1}\tilde{\beta}_{k}\tilde{\sigma}_{n,k}^{*}(\tilde{\Psi}_\infty^{k}) ,$$
$$v_{n}^{k}=(R_{n}^{k})^{-\frac{1}{2}}\beta_{k}\sigma_{n,k}^{*}(U_\infty^{k}) ,$$
$$\phi_{n}^{k}=(R_{n}^{k})^{-1}\beta_{k}\sigma_{n,k}^{*}(\Psi_\infty^{k}) ,$$

with $\tilde{\sigma}_{n,k}=(\tilde{\rho}_{n,k})^{-1}$ and $\tilde{\rho}_{n,k}(\cdot)=exp_{\tilde{x}_{n}^k}(\tilde{R}_{n}^k \cdot)$ and $\sigma_{n,k}=(\rho_{n,k})^{-1}$ and $\rho_{n,k}(\cdot)=\mathcal{F}_{x_{n}^k}(R_{n}^k \cdot).$
Also $\beta_{k}$ are smooth compactly supported functions, such that $\tilde{\beta}_{k}=1$ on $B_{1}(\tilde{x}^{k})$ and $supp(\tilde{\beta}_{k})\subset B_{2}(\tilde{x}^{k})$.
\end{proof}

\section{Classification of ground-state bubbles}\label{sec:groundstates}
In this Section we prove Theorem \ref{thm:classification}, as well as \eqref{eq:energygap}.

\begin{lemma}\label{lem:lowerbubbles}
Let $(u,\psi)\in H^1(\Sph^3_+)\times H^{1/2}_{+}(\Sph^3_+)$ be a non trivial solution to \eqref{eq:blowup}, then
\begin{equation}\label{eq:lowerbubbles}
E_{\Sph^3_+}(u,\psi)\geq \frac{1}{2}Y(\Sph^3_+,\partial\Sph^3_+,[g_0])\lambda^{+}_{\text{CHI}}(\Sph^3_+,\partial\Sph^3_+,[g_0])\,,\end{equation}
where the conformal invariants on the right-hand side are as in \eqref{eq:yamabeinv}, \eqref{eq:chiralbag}.
\end{lemma}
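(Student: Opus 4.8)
The plan is to extract two testing identities from the system on $\Sph^3_+$ and feed them into the Yamabe and chiral invariants. By Theorem \ref{thmreg} we have $(u,\psi)\in C^\infty(\Sph^3_+)\times C^\infty(\Sigma_{g_0}\Sph^3_+)$, and since the round hemisphere has positive Yamabe invariant there are no harmonic spinors; hence $u\not\equiv 0$ (otherwise $D_{g_0}\psi=0$ would force $\psi\equiv 0$) and $\psi\not\equiv 0$ (otherwise $L_{g_0}u=0$ with $B_{g_0}u=0$ would force $u\equiv 0$, as the first eigenvalue of $(L_{g_0},B_{g_0})$ is positive). Pairing $L_{g_0}u=u|\psi|^2$ with $u$ and integrating by parts — the boundary term vanishes because $B_{g_0}u=\frac{\partial u}{\partial\nu}+\frac12 h_{g_0}u=0$ and $h_{g_0}=0$ on the round hemisphere — gives
\[
\int_{\Sph^3_+}|\nabla_{g_0}u|^2+\tfrac{R_{g_0}}{8}u^2\,dv_{g_0}=\int_{\Sph^3_+}u^2|\psi|^2\,dv_{g_0}\,,
\]
while pairing $D_{g_0}\psi=u^2\psi$ with $\psi$ gives pointwise $\langle D_{g_0}\psi,\psi\rangle=u^2|\psi|^2$, hence $\int_{\Sph^3_+}\langle D_{g_0}\psi,\psi\rangle\,dv_{g_0}=\int_{\Sph^3_+}u^2|\psi|^2\,dv_{g_0}$. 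Substituting both identities into \eqref{eq:bubblessphereaction} collapses the energy to
\[
E_{\Sph^3_+}(u,\psi)=\tfrac12\int_{\Sph^3_+}u^2|\psi|^2\,dv_{g_0}=\tfrac12\int_{\Sph^3_+}|\nabla_{g_0}u|^2+\tfrac{R_{g_0}}{8}u^2\,dv_{g_0}\,,
\]
which is in particular strictly positive.

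Next I would bound the right-hand side below by the two conformal invariants. Applying the definition \eqref{eq:yamabeinv} to $|u|$ (using $h_{g_0}=0$ and $|\nabla_{g_0}|u||=|\nabla_{g_0}u|$ a.e.) yields
\[
\int_{\Sph^3_+}|\nabla_{g_0}u|^2+\tfrac{R_{g_0}}{8}u^2\,dv_{g_0}\ \ge\ Y(\Sph^3_+,\partial\Sph^3_+,[g_0])\Big(\int_{\Sph^3_+}u^6\,dv_{g_0}\Big)^{1/3}\,.
\]
For the chiral invariant, the key observation is that $\psi$ becomes a genuine Dirac eigenspinor after a conformal change: assuming $u>0$ and setting $\bar g=u^4g_0=e^{2f}g_0$ with $e^f=u^2$, the covariance formula \eqref{eq:conformaldirac} with $n=3$ together with $D_{g_0}\psi=u^2\psi$ gives
\[
D_{\bar g}\big(F(u^{-2}\psi)\big)=u^{-4}F(D_{g_0}\psi)=u^{-4}F(u^2\psi)=u^{-2}F(\psi)=F(u^{-2}\psi)\,,
\]
so $\varphi:=F(u^{-2}\psi)\ne 0$ is an eigenspinor of $D_{\bar g}$ with eigenvalue $1$, and it satisfies the chiral bag condition for $\bar g$, since $\mathbb{B}^{+}_{\bar g}\varphi=u^{-2}\,\mathbb{B}^{+}_{\bar g}(F(\psi))=u^{-2}F(\mathbb{B}^{+}_{g_0}\psi)=0$. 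Hence $0<\lambda^{+}_1(\bar g)\le 1$, and \eqref{eq:chiralbag} gives
\[
\lambda^{+}_{\text{CHI}}(\Sph^3_+,\partial\Sph^3_+,[g_0])\ \le\ |\lambda^{+}_1(\bar g)|\,\vol(\Sph^3_+,\bar g)^{1/3}\ \le\ \vol(\Sph^3_+,\bar g)^{1/3}=\Big(\int_{\Sph^3_+}u^6\,dv_{g_0}\Big)^{1/3}\,.
\]
Chaining the last three displays with the collapsed energy identity gives $E_{\Sph^3_+}(u,\psi)\ge\frac12 Y(\Sph^3_+,\partial\Sph^3_+,[g_0])\,\lambda^{+}_{\text{CHI}}(\Sph^3_+,\partial\Sph^3_+,[g_0])$, which is \eqref{eq:lowerbubbles}.

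The one point requiring care — and the main obstacle — is the inequality $\big(\int_{\Sph^3_+}u^6\big)^{1/3}\ge\lambda^{+}_{\text{CHI}}$ when $u$ is only assumed non-trivial: then $\bar g=u^4g_0$ degenerates on $\{u=0\}$ and is not a metric in the conformal class $[g_0]$. I would resolve this either by first establishing $u>0$ (strong maximum principle and Hopf lemma, using $\frac{\partial u}{\partial\nu}=0$; this is automatic in the ground-state situation of Theorem \ref{thm:classification}, where $u\ge0$ is assumed), or, in general, by invoking the Sobolev-quotient characterization of $\lambda^{+}_{\text{CHI}}$ for chiral bag conditions from \cite{raulot}: combined with the Hölder bound $\int_{\Sph^3_+}u^2|\psi|^2\le(\int_{\Sph^3_+}u^6)^{1/3}(\int_{\Sph^3_+}|\psi|^3)^{2/3}$ applied to the identity $\int_{\Sph^3_+}\langle D_{g_0}\psi,\psi\rangle=\int_{\Sph^3_+}u^2|\psi|^2$, it yields $\lambda^{+}_{\text{CHI}}\le(\int_{\Sph^3_+}u^6)^{1/3}$ directly. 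All remaining steps are routine integrations by parts and applications of the conformal-covariance and embedding statements recalled in Section \ref{sec:prop}.
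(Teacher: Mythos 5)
Your first two steps coincide with the paper's: testing the two equations gives $2E_{\Sph^3_+}(u,\psi)=\int_{\Sph^3_+}u^2|\psi|^2\,dv_{g_0}$, and the Yamabe quotient gives $\int_{\Sph^3_+}u^2|\psi|^2\,dv_{g_0}\ge Y(\Sph^3_+,\partial\Sph^3_+,[g_0])\big(\int_{\Sph^3_+}u^6\,dv_{g_0}\big)^{1/3}$. The gap is exactly where you locate it, namely the inequality $\big(\int_{\Sph^3_+}u^6\,dv_{g_0}\big)^{1/3}\ge\lambda^{+}_{\text{CHI}}$, and neither of your two patches closes it. Patch (a) fails because the lemma does not assume $u\ge0$: the strong maximum principle only upgrades a nonnegative, nontrivial $u$ to a positive one, and it cannot exclude sign-changing solutions; if $u$ vanishes somewhere, $\bar g=u^4g_0$ is not a metric in $[g_0]$ and your eigenvalue comparison $\lambda^+_1(\bar g)\le 1$ has no meaning. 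Patch (b) invokes a characterization of $\lambda^{+}_{\text{CHI}}$ of the form $\lambda^{+}_{\text{CHI}}\le \int\langle D\psi,\psi\rangle\,/\,\|\psi\|_{L^3}^2$, but this is not the Raulot/Ammann-type variational characterization and it is false as a statement about general admissible spinors: writing $\psi=\psi^++t\psi^-$ and tuning $t$ so that $\int\langle D\psi,\psi\rangle$ is a small positive number while $\|\psi\|_{L^3}$ stays bounded below shows that the infimum of that quotient over $\{\int\langle D\psi,\psi\rangle>0\}$ is $0$, so it cannot bound $\lambda^{+}_{\text{CHI}}$ from above for the particular $\psi$ at hand without further argument.

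The paper's proof avoids positivity of $u$ altogether by using the correct quotient, in which the $L^{3/2}$-norm of $D\psi$ appears in the numerator:
\begin{equation}
\lambda^{+}_{\text{CHI}}(\Sph^3_+,\partial\Sph^3_+,[g_0])\;\le\;\frac{\left(\int_{\Sph^3_+}|D\psi|^{3/2}\,dv_{g_0}\right)^{4/3}}{\left|\int_{\Sph^3_+}\langle D\psi,\psi\rangle\,dv_{g_0}\right|}\;=\;\frac{\left(\int_{\Sph^3_+}|u|^{3}|\psi|^{3/2}\,dv_{g_0}\right)^{4/3}}{\int_{\Sph^3_+}u^2|\psi|^2\,dv_{g_0}}\,,
\end{equation}
and then the relevant H\"older inequality is not the one you wrote but
$\int|u|^3|\psi|^{3/2}=\int |u|^{3/2}\cdot|u|^{3/2}|\psi|^{3/2}\le\big(\int u^6\big)^{1/4}\big(\int u^2|\psi|^2\big)^{3/4}$,
which immediately yields $\lambda^{+}_{\text{CHI}}\le\big(\int u^6\big)^{1/3}$ for any nontrivial solution, sign-changing or not. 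Your conformal-change argument is correct and instructive in the case $u>0$ (and would suffice for the ground-state classification, where $u\ge0$ is assumed), but as a proof of the lemma in the stated generality it needs to be replaced by, or supplemented with, the quotient above.
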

\begin{proof}
Recalling the variational characterization of the above invariants, we get
\be\label{eq:upperbound}
\displaystyle
Y(\Sph^n_+,\partial\Sph^n_+,[g_0])\leq \frac{\int_{\Sph^3_+}u L_{g_0}u\, dv_{g_0}}{\left(\int_{\Sph^3_+}\vert  u\vert^6\, dv_{g_0}\right)^{1/3}}=\frac{\int_{\Sph^3_+}u^2 \vert\psi\vert^2\, dv_{g_0}}{\left(\int_{\Sph^3_+}\vert  u\vert^6\, dv_{g_0}\right)^{1/3}}\,,
\ee
and
\[
\displaystyle
\lambda^{+}_{\text{CHI}}(\Sph^3_+,\partial\Sph^3_+,[g_0])\leq \frac{\left(\int_{\Sph^3_+}\vert D\psi\vert^{3/2}\, dv_{g_0} \right)^{4/3}}{\left\vert \int_{\Sph^3_+}\langle D\psi,\psi\rangle \, dv_{g_0} \right\vert}=\frac{\left(\int_{\Sph^3_+} \vert u\vert^3 \vert\psi\vert^{3/2}\, dv_{g_0} \right)^{4/3}}{\int_{\Sph^3_+}u^2 \vert\psi\vert^2 \, dv_{g_0} } \,.
\]
Then H\"older's inequality gives
\[
\lambda^{+}_{\text{CHI}}(\Sph^3_+,\partial\Sph^3_+,[g_0])\leq \left(\int_{\Sph^3_+}\vert u\vert^6 \, dv_{g_0}\right)^{1/3}\,.
\]
The claim then follows by observing that for a solution to \eqref{eq:blowup} there holds
\[
E_{\Sph^3_+}(u,\psi)=\frac{1}{2}\int_{\Sph^3_+}u^2 \vert\psi\vert^2\, dv_{g_0} \,.
\]
\end{proof}

\begin{proof}[Proof of Theorem \ref{thm:classification}]
Let $(u,\psi)\in H^1(\Sph^3_+)\times H^{1/2}_+(\Sph^3_+)$ be a ground state solution to \eqref{eq:bubblessphere}, with $u\geq0$. Then
\be\label{eq:level}
2E_{\Sph^3_+}(u,\psi)=\int_{\Sph^3_+}u^2\vert\psi\vert^2\, dv_{g_0}=Y(\Sph^3_+,\partial\Sph^3_+,[g_0])\lambda^{+}_{\text{CHI}}(\Sph^3_+,\partial\Sph^3_+,[g_0])
\ee

Moreover, observe that by the maximum principle actually $u>0$, so that the formula
\[
g:= \frac{4}{9} u^4g_0\,,\qquad \mbox{on $\Sph^3_+$}
\]
defines a proper conformal change of metric. Consider the associated isometry of spinor bundles
\[
F:\Sigma_{g_0}\Sph^3_+\to \Sigma_{g}\Sph^3_+\,.
\]
Setting
\[
v:=\sqrt{\frac{3}{2}}u\,,\qquad \varphi:= \frac{3}{2}u^{-2}F(\psi)\,,
\]
by \eqref{eq:conformaldirac} we deduce that $v$ and $\varphi$ solve \eqref{eq:bubblessphere} in the new metric $g$, namely
\begin{equation}\label{eq:bubblesnewmetric}
\left\{\begin{array}{ll}
L_{g}v=v|\varphi|^{2}\\
D_{g}\varphi=\varphi \qquad \mbox{on $\Sph^3_+$}\\
\frac{\partial u}{\partial \nu}=\frac{1}{2}h_gu^3\\
\mathbb{B}^{+}\psi=0\qquad \mbox{on $\partial\Sph^3_+$}
\end{array}
\right.
\end{equation}
where $h_g$ is the mean curvature of $\partial\Sph^3_+$ in the metric $g$. By \eqref{eq:upperbound} we find
\[
Y(\Sph^n_+,\partial\Sph^n_+,[g_0])\left(\int_{\Sph^3_+}\vert  u\vert^6\, dv_{g_0}\right)^{1/3}\leq \int_{\Sph^3_+}u^2 \vert\psi\vert^2\, dv_{g_0}\,,
\]
and combining this fact with \eqref{eq:level} we get
\[
\lambda^{+}_{\text{CHI}}(\Sph^3_+,\partial\Sph^3_+,[g_0])\geq \left(\int_{\Sph^3_+}\vert  u\vert^6\, dv_{g_0}\right)^{1/3}
\]
The Hijazi inequality for manifold with boundary \cite{raulot} then gives
\be\label{eq:equality}
\begin{split}
\frac{3}{8}Y(\Sph^n_+,\partial\Sph^n_+,[g_0])& \leq \vert\lambda_{\text{CHI}}\vert\vol(\Sph^3_+,g)  \\
& = \left(\int_{\Sph^3_+}\vert  u\vert^6\, dv_{g_0}\right)^{2/3}\leq \lambda^{+}_{\text{CHI}}(\Sph^3_+,\partial\Sph^3_+,[g_0])^2
\end{split}
\ee
where $\lambda_{\text{CHI}}$ is the smallest eigenvalue of $(D,\B^+)$. Since the first and the last term in \eqref{eq:equality} coincide, we have equality in the Hijazi inequality and $\varphi$ is a Killing spinor:
\[
\nabla^g_X\varphi =\alpha X\cdot \varphi\,,\qquad\forall X\in \Gamma(T\Sph^3_+)\,.
\]
Testing the above condition with vectors $e_i, i=1,2,3$ of a local tangent frame and summing up one gets
\[
D^g\varphi = \sum^3_{i=1}e_i\cdot \nabla^g_{e_i}\varphi=-3\alpha\varphi\,,
\]
by the Clifford anti-commutation rules. Thus $\alpha=-\frac{1}{2}$, using the second equation in \eqref{eq:bubblesnewmetric}. Moreover, observe that
\[
\nabla^g_X \varphi=-\frac{1}{2}X\cdot\varphi=-\frac{1}{3}X\cdot D^g\varphi\,,\qquad\forall X\in\Gamma(T\Sph^3_+)\,,
\]
and $\varphi$ is a twistor spinor, so that by \cite[Prop. A.2.1]{diracspectrum}
\[
(D^g)^2\varphi=\frac{9}{4}\varphi=\frac{3}{8}R_g\varphi\,,
\]
so that $R_g=6$. Then \cite[Thm. 4.1, 4.2]{escobar} implies that there exists an isometry
\[
f:(\Sph^3_+,g)\to(\Sph^3_+,g_0)
\]
such that $f^*g_0=g=u^4g_0$. We thus conclude that
\[
\dd\vol_{f^*g_0}=\det(\dd f)=\left(\frac{8}{27}u^6 \right)\dd\vol_{g_0}
\]
and
\[
u=\sqrt{\frac{3}{2}}\det(\dd f)^{1/6}\,.
\]
Observe that $f$ induces an isometry of spinor bundles $\Lambda$ and the following diagram commutes
\begin{center}
 \begin{tikzcd}
  (\Sigma_{f^*g_0}\Sph^3_+, f^*g_0) \arrow[r, "\Lambda"] \arrow[d]& (\Sigma_{g_0}\Sph^3_+,g_0) \arrow[d] \\
  (\Sph^3_+,f^*g_0) \arrow[r, "f"] & (\Sph^3_+,g_0)
 \end{tikzcd}\,,
\end{center}
the vertical arrows denoting the projections defining the spinor bundles. Notice that, with an abuse of notation, we denote the metrics on the spinor bundles with the same symbol as the manifold metrics. Then the spinor $\Psi:=\Lambda\circ\varphi\circ f^{-1}$ is $-\frac{1}{2}$-Killing with respect to the round metric $g_0$, as this property is preserved by isometries. Finally, we can rewrite
\be\label{eq:pullbackspinor}
\varphi=\Lambda^{-1}\circ\Psi\circ f=: f^*\Psi\,,
\ee
and then
\[
\psi=\frac{2}{3}u^2 F^{-1}(\varphi)=\det(\dd f)^{1/3} F^{-1}(f^*\Psi)\,.
\]

\end{proof}

\begin{proof}[Proof of Corollary \ref{cor:euclideanbubbles}]
Formula \eqref{eq:scalarpart} is obtained applying a conformal transformation of $\Sph^3_+ $ to a standard bubble for the Yamabe problem on the half-space. Then we only have to deal with the spinorial part \eqref{eq:spinorpart}.

Notice that Killing spinors on $\Sph^3_+$ are mapped to $\R^3_+$ via the stereographic projection to get
\[
\Psi(x)=\left(\frac{2}{1+\vert \tilde{x}\vert^2+x^2_3} \right)^{3/2}(1-(\tilde{x},x_3))\cdot\Phi\,,\qquad x=(\tilde{x},x_3)\in\R^3_+\,,
\]
and the parallel spinor $\Phi$ can be chosen to meet chiral bag boundary conditions. Our aim is to prove that applying conformal transformations of $(\R^3_+,g_{\R^3_+})$ we get another spinor of the form \eqref{eq:spinorpart}, for a suitable choice of parameters.

Given $y\in\R^2$, $\lambda>0$, consider the map $f_{y,\lambda}:\R^3_+\to\R^3_+$ defined as
\[
f(\tilde{x},x_3)=\lambda^{-1}(\tilde{x}-y,x_3)\,,
\]
so that the pull-back metric reads $f^*_{y,\lambda}g_{\R^3_+}=\lambda^{-2}g_{\R^3_+}$. Observe that since the frame bundle is trivial $P_{SO}(\R^3_+,g_{\R^3_+})=\R^3_+\times SO(3)$ we get an induced map
\[
\tilde{f}_{y,\lambda}(x,v_1,v_2,v_3)=(f_{y,\lambda}(x),v_1,v_2,v_3)\,,
\]
acting as the identity on the $SO(3)$ factor. Then such maps lifts to $P_{Spin}(\R^3_+,g_{\R^3_+})=\R^3_+\times Spin(3)$, still acting as the identity on the second component. The spinor bundle is also trivial, so that we obtain a map
\[
\Lambda_{y,\lambda}:\R^3_+\times\C^2\to\R^3_+\times\C^2
\]
and the transformation on $\Psi$ reads
\[
\begin{split}
\psi(x)&=\beta_{\lambda^{-2}g_{\R^3_+},g_{\R^3_+}}\Lambda^{-1}_{y,\lambda}\Psi(f_{y,\lambda}(x)) \\
& = \left(\frac{2\lambda}{\lambda^2+\vert \tilde{x}-y\vert^2+x^2_3} \right)^{3/2}F_{\lambda^{-2}g_{\R^3_+},g_{\R^3_+}}\Lambda^{-1}_{y,\lambda}\left(1-\left(\frac{\tilde{x}-y}{\lambda},\frac{x_3}{\lambda} \right) \right)\cdot\Phi_0.
\end{split}
\]
Here $F_{\lambda^{-2}g_{\R^3_+},g_{\R^3_+}}$ denotes the identification of spinors for conformally related metrics. The above discussion shows that $F_{\lambda^{-2}g_{\R^3_+},g_{\R^3_+}}\Lambda^{-1}_{y,\lambda}$ can be taken to be the identity map, thus proving \eqref{eq:spinorpart}. Let us now consider the effect of a rotation. Take $R\in SO(2)$, identifying it with a rotation preserving $\partial\R^3_+=\R^2$. By the previous discussion, such map lifts to $\Lambda_R: \Sigma\R^3_+\to \Sigma\R^3_+$. Now, considering a spinor of the form \eqref{eq:spinorpart} we get
\[
\begin{split}
\Lambda^{-1}_R(\psi(Rx))&=\left(\frac{2\lambda}{\lambda^2+\vert R\tilde{x}-y\vert^2+x^2_3} \right)^{3/2}F_{\lambda^{-2}g_{\R^3_+},g_{\R^3_+}}\Lambda^{-1}_{R}\left(1-\left(\frac{R\tilde{x}-y}{\lambda},\frac{x_3}{\lambda} \right) \right)\cdot\Phi_0 \\
&=\left(\frac{2\lambda}{\lambda^2+\vert \tilde{x}-R^{-1}y\vert^2+x^2_3} \right)^{3/2}\left(1-\left(\frac{\tilde{x}-R^{-1}y}{\lambda},\frac{x_3}{\lambda} \right) \right)\cdot \Lambda^{-1}_R(\Phi_0)\,,
\end{split}
\]
since for given $v\in\R^3$, $\varphi\in\Sigma\R^3_+$ we have $\Lambda_R(v\cdot\varphi)=(Rv)\cdot \Lambda_R(\varphi)$. The proof is concluded as we obtained a spinor of the form \eqref{eq:spinorpart}, with parameters $R^{-1}y\in\R^3_+$, $\lambda>0$ and $\Lambda^{-1}_R(\Phi_0)\in\Sigma\R^3_+$.
\end{proof}


\section{Aubin Type inequality}\label{sec:aubin}
This section is devoted to the proof of Theorem \ref{Aubin type result}.
\smallskip

To this aim, recall the definition of the functionals
$$\tilde{E}(u,\psi)=\frac{\left(\displaystyle\int_{M}uL_{g}udv_{g}\right)\left(\displaystyle\int_{M}\langle D_g \psi,\psi\rangle dv_{g}\right)}{\displaystyle\int_{M}|u|^{2}|\psi|^{2}dv_{g}} ,\qquad I(\psi)=\frac{\displaystyle\int_{M}\langle D_g \psi,\psi\rangle dv_{g}}{\displaystyle\int_{M}|u|^{2}|\psi|^{2}dv_{g}} \; $$

and of the conformal constant

\begin{equation}\label{eq:defYtilde}
\widetilde{Y} (M, \partial M, [g])=\inf\left\{\begin{array}{ll}
\tilde{E}(u,\psi); \text{ where  $(u,\psi)\in H^{1}(M)\setminus\{0\}\times H^{\frac{1}{2}}_+(\Sigma M)\setminus \{0\}$ s.t. }\\
\\
I(\psi)> 0, \quad  P^{-}\left(D_g \psi-I(\psi)u^{2}\psi\right) =0\end{array}  \right\} \;  .
\end{equation}

First of all, we need a characterization of the first eigenvalue of the Dirac operator in terms of the following minimization problem: for a given $u>0$ and smooth, we consider
\begin{equation}\label{eq:lambda}
\tilde{\lambda}_u=\inf\left\{I(\psi); \text{ for  $\psi\in H^{\frac{1}{2}}_+(\Sigma M)$ s.t. }  I(\psi)> 0, \; P^{-}\left(D_g \psi-I(\psi)u^{2}\psi\right) =0 \right\} \,,
\end{equation}
then we have
\begin{proposition}\label{tildelambda}
Let $(M,g)$ be a compact oriented three-dimensional Riemannian manifold with boundary.
For any given $u>0$ and smooth, we have that $\tilde{\lambda}_u>0$. Moreover, the minimization problem is achieved and $\tilde{\lambda}_u$ coincide with the first eigenvalue $\lambda_{1}^+(g_u)$ for the Dirac operator $D_{g_{u}}$, under chiral bag boundary conditions, where $g_u=\frac{4}{9}u^4g$.
\end{proposition}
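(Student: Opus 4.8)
The plan is to exploit the conformal covariance of the Dirac operator under the change $g_u=\frac{4}{9}u^4 g$ to reduce the minimization problem \eqref{eq:lambda} to the classical variational characterization of the first chiral eigenvalue of a Dirac operator. First I would record the key identities: for $g_u = \frac{4}{9} u^4 g$ in dimension $n=3$ (so the conformal factor is $e^{2f}$ with $e^{2f} = \frac{4}{9}u^4$, hence $u$ plays the role of $e^{f}$ up to the constant $\sqrt[3]{2/3}$ adjustments appearing in Section \ref{sec:groundstates}), the isometry of spinor bundles $F = F_{g,g_u}$ satisfies \eqref{eq:conformaldirac}, so that setting $\varphi := u^{-2} F(\psi)$ (matching the normalization used in the proof of Theorem \ref{thm:classification}), one has $D_{g_u}\varphi = u^{-2} F(D_g\psi)$ pointwise, together with $\mathbb{B}^+_{g_u}(F(\psi)) = F(\mathbb{B}^+_g \psi)$ so that chiral bag boundary conditions are preserved. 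Using \eqref{eq:conformalinvariance} the relevant integrals transform as
\[
\int_M \langle D_g\psi,\psi\rangle\,dv_g = \int_M \langle D_{g_u}\varphi,\varphi\rangle\,dv_{g_u}, \qquad \int_M u^2|\psi|^2\,dv_g = \int_M |\varphi|^2\,dv_{g_u},
\]
and analogously for the $L^{3/2}$-type quantity. Hence $I(\psi) = \dfrac{\int_M \langle D_{g_u}\varphi,\varphi\rangle\,dv_{g_u}}{\int_M |\varphi|^2\,dv_{g_u}}$, which is precisely the Rayleigh quotient for the Dirac operator $D_{g_u}$ under chiral bag boundary conditions.

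\textbf{Key steps.} (1) Establish the dictionary $\psi \leftrightarrow \varphi$ above and verify that the constraint $P^-(D_g\psi - I(\psi)u^2\psi) = 0$ (interpreted in the duality sense spelled out after \eqref{Ytilde}) translates, under $F$, into the statement that $\varphi$ is $L^2_{g_u}$-orthogonal to the negative eigenspace of $D_{g_u}$, i.e. that $\varphi$ lies in $H^{1/2,+}_+ \oplus H^{1/2,0}_+$ for the spectral decomposition of $D_{g_u}$; combined with $I(\psi) > 0$ this is exactly the admissible class for the min-max characterization of $\lambda_1^+(g_u)$. Here one uses that the positivity of the Yamabe invariant forces $\ker D_{g_u} = 0$ (no harmonic spinors), so the zero-eigenspace is absent. (2) Recall the standard fact — e.g. from \cite{fs,HMR} — that for a self-adjoint Fredholm Dirac operator with chiral bag boundary conditions and trivial kernel,
\[
\lambda_1^+(g_u) = \inf\left\{ \frac{\int_M \langle D_{g_u}\varphi,\varphi\rangle\,dv_{g_u}}{\int_M |\varphi|^2\,dv_{g_u}} \;:\; \varphi \in H^{1/2}_+(\Sigma_{g_u}M),\ P^-_{g_u}\varphi = 0,\ \varphi \neq 0 \right\},
\]
and that this infimum is attained by a first positive eigenspinor. (3) Transport the minimizer back through $F^{-1}$ to obtain an admissible $\psi$ realizing $\tilde\lambda_u$, and conclude $\tilde\lambda_u = \lambda_1^+(g_u) > 0$, positivity being immediate since $D_{g_u}$ has discrete spectrum bounded away from $0$.

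\textbf{Main obstacle.} The delicate point is step (1): making rigorous the equivalence between the constraint $P^-(D_g\psi - I(\psi)u^2\psi) = 0$ written with respect to the \emph{original} metric $g$ and the spectral projection $P^-_{g_u}$ associated to the \emph{conformal} metric $g_u$. The two spectral decompositions are genuinely different (the eigenspinors of $D_g$ and of $D_{g_u}$ are not related by a simple rescaling), so one cannot naively identify $P^-$ with $P^-_{g_u}$. The correct route is to observe that the constraint, read in duality, says precisely that $D_g\psi - I(\psi)u^2\psi$ annihilates the range of $P^-$ acting on test spinors, equivalently — after applying $F$ and using the conformal covariance of $D$ and of the pairing — that $D_{g_u}\varphi - I(\psi)\varphi$ is $L^2_{g_u}$-orthogonal to the span of the negative eigenspinors of $D_{g_u}$; one must check this transfer is an honest equivalence in the fractional Sobolev setting, which requires that $F$ and its adjoint map the relevant duality pairs $H^{-1/2}\times H^{1/2}$ isomorphically (this follows from \eqref{eq:conformalinvariance} and boundedness of $F$, but should be stated carefully). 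Once this orthogonality is in hand, an elementary spectral argument — writing $\varphi = \varphi^+ + \varphi^-$ and testing against $\varphi^-$ — forces $\varphi^- = 0$ and shows $I(\psi)$ is bounded below by $\lambda_1^+(g_u)$ with equality precisely for a first eigenspinor, and the existence of the minimizer follows from the compactness of the embedding $H^{1/2}_+(\Sigma M)\hookrightarrow L^2$ together with weak lower semicontinuity of the quadratic forms on the positive spectral subspace.
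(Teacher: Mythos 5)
Your plan hinges on the claim, in step (1), that the constraint $P^{-}\left(D_g\psi-I(\psi)u^{2}\psi\right)=0$ becomes, under the conformal dictionary $\varphi=e^{-f}F(\psi)$ with $e^{2f}=\tfrac49u^4$, the statement that $\varphi$ lies in the non-negative spectral subspace of $D_{g_u}$ (equivalently, that $D_{g_u}\varphi-I\varphi$ is $L^2_{g_u}$-orthogonal to the negative eigenspinors of $D_{g_u}$). This is the genuine gap, and it is not repaired by conformal covariance of $D$ and of the pairing: the projector $P^{-}$ in the constraint is the spectral projector of $D_g$, so in duality the constraint is orthogonality against the $D_g$-negative eigenspinors, and by \eqref{eq:conformaldirac} a $D_g$-eigenspinor $\chi$ with $D_g\chi=\lambda\chi$ is carried to $\tilde\chi=e^{-f}F(\chi)$ solving the \emph{weighted} equation $D_{g_u}\tilde\chi=\lambda\,e^{-f}\tilde\chi$, not an eigenvalue equation for $D_{g_u}$ (unless $u$ is constant). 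Hence the transported test space is some subspace $V$ different from $H^{\frac{1}{2},-}_{+}$ for $g_u$, the component $\varphi^{-}$ in the $D_{g_u}$-decomposition need not lie in $V$, and your "test against $\varphi^{-}$" argument cannot be run; likewise your existence argument (compact embedding plus lower semicontinuity on the positive spectral subspace of $D_{g_u}$) presupposes exactly this false identification, and the actual constraint set is nonlinear in $\psi$ (through $I(\psi)$) and not obviously weakly closed. What the dictionary does give cheaply is only one inequality: a first eigenspinor of $D_{g_u}$ pulls back to an admissible competitor (for which the constraint holds trivially, since the defect is zero), so $\tilde\lambda_u\le\lambda_1^{+}(g_u)$; the converse is the hard part.

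The paper's proof avoids this entirely by working in the original metric throughout. Positivity is proved directly from the constraint: normalizing $\int_M u^2|\psi_n|^2\,dv_g=1$, testing against $\psi_n^{-}$ gives $\|\psi_n^{-}\|^2\le C(u)\,I(\psi_n)$ and then $\|\psi_n^{+}\|^2\le (C+1)I(\psi_n)$, so $I(\psi_n)\to0$ would force $\psi_n\to0$ in $H^{\frac12}_{+}$, contradicting the normalization. Attainment is then obtained by a constrained variational argument: the set $S=\{P^{-}(D_g\psi-I(\psi)u^2\psi)=0\}$ is shown to be a manifold (the differential of the defining map is negative definite, hence invertible, on $H^{\frac{1}{2},-}_{+}$), Ekeland's principle produces a (PS) sequence on $S$ which is upgraded to a genuine (PS) sequence in $H^{\frac12}_{+}(\Sigma M)$ via the projectors $P_n=K_n\circ DF(\psi_n)$, and the bounded sequence has a weak limit solving $D_g\psi=\tilde\lambda_u u^2\psi$. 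Only at this final stage is the conformal change $g_u=\tfrac49u^4g$ invoked: once the Euler--Lagrange equation is in hand it transforms into a true eigenvalue equation for $D_{g_u}$ with chiral bag conditions, and minimality identifies $\tilde\lambda_u$ with $\lambda_1^{+}(g_u)$. To fix your argument you would either have to prove that the infimum over the transported constraint set equals the infimum over $\{P^{-}_{g_u}\varphi=0\}$ -- which is essentially the content of the proposition, not a consequence of covariance -- or follow a direct attainment argument of the paper's type.
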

\begin{proof}
Let $\psi_{n}$ be a minimizing sequence, $I(\psi_{n})\to \tilde{\lambda}_u$.  We can assume
\begin{equation}\label{eq:normalization}
\int_{M}u^{2}|\psi_n|^{2}dv_{g}=1\,,
\end{equation} Then we have
$$I(\psi_{n})=\|\psi_{n}^{+}\|^{2}-\|\psi_{n}^{-}\|^{2} .$$
Since
$$-\|\psi_{n}^{-}\|^{2}=\int_M\langle D_g\psi_n,\psi^-_n\rangle= I(\psi_{n})\int_{M}u^{2}\langle \psi_n,\psi_n^{-}\rangle dv_{g},$$
by the last condition in \eqref{eq:lambda}. Then, using H\"{o}lder's inequality and \eqref{eq:normalization}, we have
$$\|\psi_{n}^{-}\|^{2}\leq I(\psi_{n})\left(\int_{M}u^{2}|\psi_n^{-}|^{2}dv_{g}\right)^{\frac{1}{2}}.$$
Also, since the projector $P^{-}:H^{\frac{1}{2}}_+(\Sigma M)\to H^{\frac{1}{2},-}_+(\Sigma M)$ is a zero-order operator, we have
\begin{align}
\|\psi_{n}^{-}\|^{2}&\leq I(\psi_{n}) \sup(u) \|\psi^{-}\|_{L^{2}}\notag\\
&\leq I(\psi_{n}) \sup(u) \|\psi\|_{L^{2}} \notag\\
&\leq I(\psi_{n}) \frac{\sup(u)}{\inf(u)} \left(\int_{M}u^{2}|\psi_n|^{2}dv_{g}\right)^{\frac{1}{2}}\notag\\
&\leq C I(\psi_{n}),\notag
\end{align}
where $C=C(u)=\frac{\sup(u)}{\inf(u)}$ and then
$$\|\psi_{n}^{+}\|^{2}=I(\psi_{n})+\|\psi_{n}^{-}\|^{2}\leq (C+1) I(\psi_{n}).$$
Now, if $I(\psi_{n})\to 0$, we would have that $\psi_{n}\to 0$ in $H^{\frac{1}{2}}_+(\Sigma M)$, but $\int_{M}u^{2}|\psi_n|^{2}dv_{g}=1$. Therefore $\tilde{\lambda}_u>0$. Moreover if $\psi_{n}$ is any minimizing sequence, then $\|\psi_{n}\|\geq \delta$, for some $\delta>0$. In order to prove the existence of a minimizer, we will follow the argument in \cite{Pan} and \cite{Sul}. So, we define
$$S=\{\psi\in H^{\frac{1}{2}}_+(\Sigma M);P^{-}(D\psi-I(\psi)u^{2}\psi)=0\} , $$
and we claim that $S$ is a manifold. Let us consider the operator
$$F:H^{\frac{1}{2}}_+(\Sigma M)\to H^{\frac{1}{2},-}_+, \qquad F(\psi)=P^{-}(D_g\psi-I(\psi)u^{2}\psi),$$
so that $S=F^{-1}(0)$; therefore, if $DF(\psi)$ is onto for all $\psi\in S$, then $S$ will be indeed a manifold. We compute its differential $DF$:
$$DF(\psi)h=P^{-}(D_gh-I(\psi)u^{2}h)-\Big(DI(\psi)h\Big)P^{-}(u^{2}\psi).$$
Notice that $$DI(\psi)h=2\frac{\int_{M}\langle D_{g}\psi-I(\psi)u^{2}\psi,h\rangle \ dv_{g}}{\int_{M}u^{2}|\psi|^{2}\ dv_{g}}.$$
Hence, if we take $\psi \in S$ and restrict $h$ to $H^{\frac{1}{2},-}_+$, we get
$$\langle DF(\psi)h,h\rangle=-\|h\|^{2}-I(\psi)\int_{M}u^{2}|h|^{2}dv_{g} .$$
Therefore $DF(\psi)$ is definite negative on $H^{\frac{1}{2},-}_+$ and hence invertible on that space, so $S$ is a manifold. Now, we can find a minimizing sequence $\psi_{n}$ for the functional $I$ that is a (PS) sequence on $S$ (by Ekeland's variational principle): we will show that it is still a (PS) sequence in $H^{\frac{1}{2}}_+(\Sigma M)$. So, if we set $DI(\psi_{n})=\varepsilon_{n}$, we have that its tangential component $\varepsilon_{n}^{T}$, on the tangent space $T_{\psi_{n}}S$ of the manifold $S$, converges to zero since $\psi_{n}$ is a (PS) sequence for $I$ on $S$. Regarding its normal component, we notice that since the operator $DF(\psi_{n})_{|H^{\frac{1}{2},-}_{+}}:H^{\frac{1}{2},-}_+\to H^{\frac{1}{2},-}_+$ is invertible, it has an inverse, namely $K_{n}:H^{\frac{1}{2},-}_{+} \to H^{\frac{1}{2},-}_{+}$ satisfying $K_{n}\circ DF(\psi_{n})_{|H^{\frac{1}{2},-}_{+}}=Id_{H^{\frac{1}{2},-}_{+}}$ , such that $\|K_{n}\|_{op}\leq C$, since we have that $\int_{M}u^{2}|\psi_{n}|^{2}dv_{g}=1$. In this way, the operator $P_{n}=K_{n}\circ DF(\psi_{n})$ is a projector on $H^{\frac{1}{2},-}$ parallel to $T_{\psi_{n}}S$: indeed, we have that if $h\in H^{\frac{1}{2},-}_{+}$ then by construction $P_{n}h=h$ and $T_{\psi_{n}}S=\ker DF(\psi_{n})$; we consider then $P_{n}^{*}$, the adjoint of $P_{n}$, which is a projector on the normal space $N_{\psi}S$ of the manifold $S$, parallel to $H^{\frac{1}{2},+}_{+}$. Now, since $\langle \varepsilon_{n}, \varphi \rangle =0$ for all $\varphi\in H^{\frac{1}{2},-}_{+}$, therefore $\varepsilon_{n}\in H^{\frac{1}{2},+}_{+}$ and so we have $\varepsilon_{n}=(Id-P_{n}^{*})\varepsilon_{n}^{T}$ (see Figure \ref{Proj}). Indeed, $P_{n}^{*}\varepsilon_{n}=0$ and $P_{n}^{*}\varepsilon_{n}^{\perp}=\varepsilon_{n}^{\perp}$. Hence,
$$(Id-P_{n}^{*})\varepsilon_{n}^{T}=(Id-P_{n}^{*})(\varepsilon_{n}-\varepsilon_{n}^{\perp})=\varepsilon_{n}-\varepsilon_{n}^{\perp}+\varepsilon_{n}^{\perp}.$$

 This shows that $\varepsilon_{n}\to 0$ and $\psi_{n}$ is a (PS) sequence for $I$ in $H^{\frac{1}{2}}_+(\Sigma M)$.

\begin{figure}[h]
\centering
\includegraphics[scale=.3]{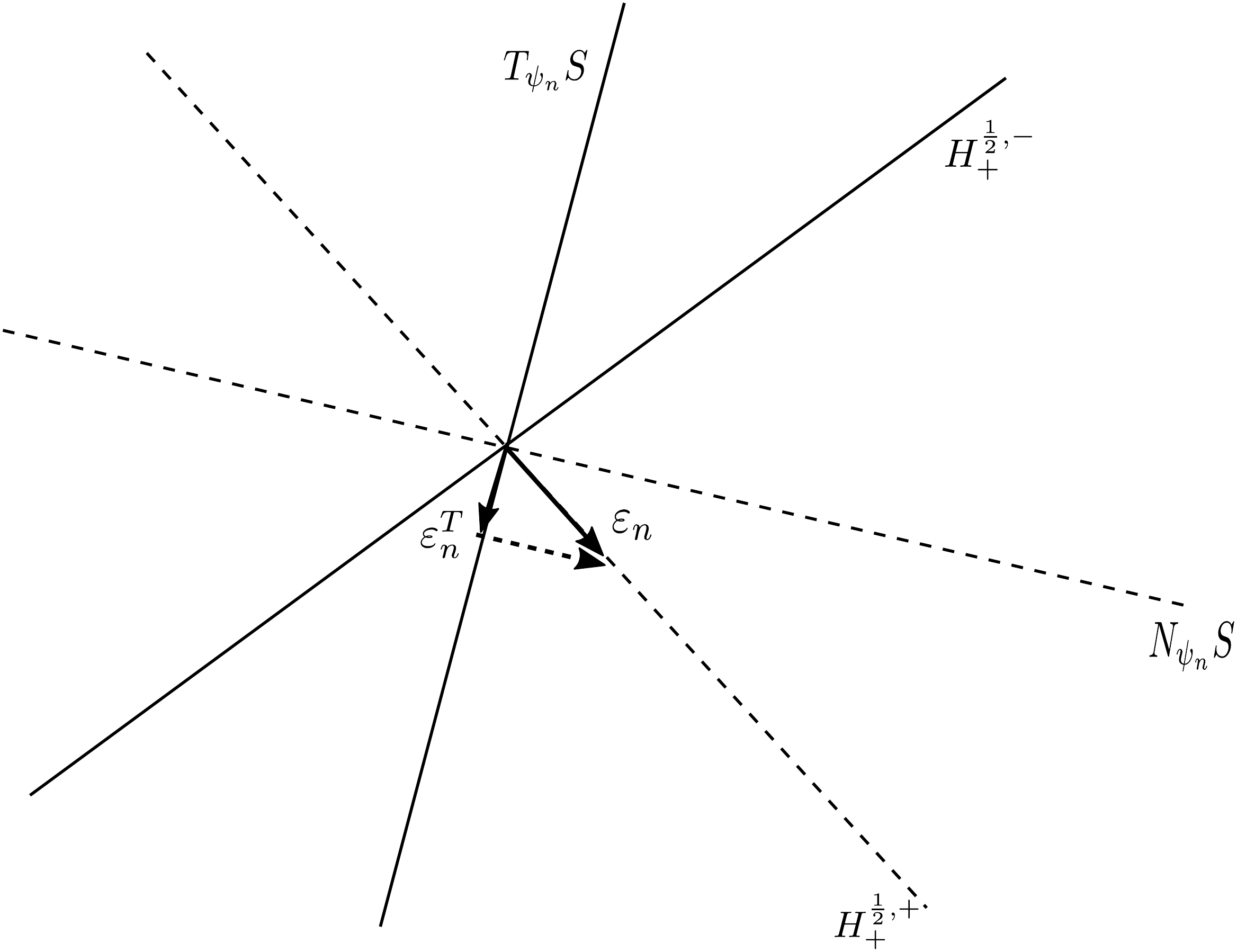}
\caption{Projection of $\varepsilon_{n}^{T}$.}
\label{Proj}
\end{figure}

 In particular this (PS) sequence $\psi_{n}$ satisfies
\begin{align}
\|\psi_{n}^{+}\|^{2}&\leq \int_{M}u^{2}|\psi^{+}_{n}||\psi_{n}|dv_{g}+o(\|\psi_{n}^{+}\|)\leq C +o(\|\psi_{n}^{+}\|) .\notag
\end{align}
Since a similar inequality holds also for $\psi_{n}^{-}$, then the (PS) sequence $\psi_{n}$ is bounded in $H^{\frac{1}{2}}_+(\Sigma M)$; therefore up to subsequences, there exists $\psi \in H^{\frac{1}{2}}_+(\Sigma M)$ such that $\psi_{n}\rightharpoonup \psi$ weakly in $H^{\frac{1}{2}}_+(\Sigma M)$ with $\psi$ satisfying
$$D_g\psi=\tilde{\lambda}_uu^{2}\psi.$$
Finally, since $\tilde{\lambda}_u$ is a minimum, by the conformal change $g_u= \frac{4}{9}u^{4}g$, we have that $$\tilde{\lambda}_u=\lambda_{1}^+(g_u).$$
\end{proof}

\noindent
We can now prove the Aubin-type inequality stated in Theorem \ref{Aubin type result}.
\begin{proof}[Proof of Theorem \ref{Aubin type result}]
We start by proving the first inequality in the formula (\ref{Aubin inequality}). By the Sobolev inequality and Proposition (\ref{tildelambda}), we have for any non-trivial $(u,\psi)\in H^{1}(M) \times H^{\frac{1}{2}}_+(\Sigma M)$:
\begin{align}
\tilde{E}(u,\psi)&\geq Y(M,\partial M,[g])\left(\int_{M}u^{6}dv_{g}\right)^{\frac{1}{3}}\frac{\displaystyle\int_{M}\langle D_g \psi,\psi\rangle dv_{g}}{\displaystyle\int_{M}|u|^{2}|\psi|^{2}dv_{g}}\notag\\
&\geq Y(M,\partial M,[g])\lambda^{+}_{1}(g_{u})Vol(g_{u})^{\frac{1}{3}}\notag \\
&\geq Y(M,\partial M,[g])\lambda^{+}_{\text{CHI}}(M,\partial M,[g]).
\end{align}
where $g_u=\frac{4}{9}u^4g$, and $ Y(M,\partial M,[g])$ is the Yamabe invariant \eqref{eq:yamabeinv}. By taking the infimum on the first side, we get the first inequality in (\ref{Aubin inequality}). In order to prove the other inequality, let us consider a spinor $\overline{\Psi}_{0}\in\Sigma\R^3_+$ such that $\overline{\Psi}_{0}$ meets the chiral bag boundary conditions. We define the spinor
$$\Psi_{0}(x)=\left(\frac{2}{1+\vert \tilde{x}-y\vert^2+x^2_3}\right)^{3/2}\left(1-\left(\tilde{x}-y,x_3 \right) \right)\cdot \overline{\Psi}_{0} $$
and the bubble
$$U_{0}=\left(\frac{2}{1+\vert \tilde{x}-y\vert^2+x^2_3}\right)^{1/2}$$
where $x=(\tilde{x},x_3)\in\R^3_+$, with $\tilde{x}\in\R^2$ and $x_3\geq0$.
Then $(U_{0},\Psi_{0})\in H^1(\R^3_+)\times H^{1/2}_+(\Sigma\R^3_+)$ is a ground state solution to \eqref{eq:blowup}, that is $(U_{0},\Psi_{0})$ is a critical point for $E_{\R^{3}_+}$ and
$$E_{\R^{3}_+}(U_{0},\Psi_{0})=\frac{1}{2}\tilde{Y}(\Sph^3_+,\partial\Sph^3_+,[g_0]).$$
Now, let $x_{0}\in M$ and $\lambda>0$, small enough. We set $\rho_{\lambda}(x)=exp_{x_{0}}(\lambda x)$, similar to \eqref{e5} and \eqref{e6}, and we consider $\beta$ a cut-off function supported in $B_{2}(x_{0})$ and such that $\beta\equiv 1$ on $B_{1}(x_{0})$. We can therefore define the functions
$$\left\{\begin{array}{ll}
u_{\lambda}=\lambda^{-\frac{1}{2}}\beta\sigma_{\lambda}^{*}(U_{0}),\\
\\
\psi_{\lambda}=\lambda^{-1}\beta\sigma_{\lambda}^{*}(\Psi_{0}),
\end{array}
\right.
$$
where $\sigma_{\lambda}=\rho_{\lambda}^{-1}$. Arguing as in Lemma \ref{lemma1}, Lemma \ref{lemma2}, Lemma \ref{lemma3} one can show that, as $\lambda \to 0$:
$$dE(u_{\lambda},\psi_{\lambda})\to 0,$$
and
$$\int_{M}u_{\lambda}L_{g}u_{\lambda}dv_{g}=\int_{\R^{3}_+}|\nabla U_{0}|^{2}dv_{g_{\R^{3}}}+o(1),$$
$$\int_{M}\langle D_g\psi_{\lambda},\psi_{\lambda}\rangle dv_{g}=\int_{\R^{3}_+}\langle D_{g_{\R^3}}\Psi_{0},\Psi_{0}\rangle dv_{g_{\R^{3}}}+o(1),$$
$$\int_{M}|u_{\lambda}|^{2}|\psi_{\lambda}|^{2}dv_{g}=\int_{\R^{3}_+}|U_{0}|^{2}|\Psi_{0}|^{2}dv_{g_{\R^{3}}}+o(1).$$
Moreover, since $b=0$, we do not have boundary terms. Now, in order to take the infimum of $\tilde{E}$, by definition \eqref{Ytilde} we need to have
$$P^{-}(D_g\psi_{\lambda}-I(\psi_{\lambda})|u_{\lambda}|^{2}\psi_{\lambda}) =0.$$
For our test functions this might not be true, therefore we will perturb $\psi_{\lambda}$ so that the previous condition is satisfied. In particular we will show that there exists $h\in H^{\frac{1}{2},-}_{+}$ so that
$$P^{-}(D_g(\psi_{\lambda}+h)-I(\psi_{\lambda}+h)|u_{\lambda}|^{2}(\psi_{\lambda}+h))=0.$$
This is equivalent to solving
$$D_g h-P^{-}[I(\psi_{\lambda})|u_{\lambda}|^{2}h]+B(h)=A_{\lambda},$$
where
$$A_{\lambda}=P^{-}(D_g\psi_{\lambda}-I(\psi_{\lambda})|u_{\lambda}|^{2}\psi_{\lambda}),\quad B(h)=P^{-}\left([I(\psi_{\lambda})-I(\psi_{\lambda}+h)]|u_{\lambda}|^{2}(\psi_{\lambda}+h)\right).$$
Next we define operator
$$T:H^{\frac{1}{2},-}_{+}\to H^{\frac{1}{2},-}_{+}, \quad T(h)=D_gh-I(\psi_{\lambda})P^{-}(|u_{\lambda}|^{2}h)+B(h),$$
and we compute its differential at zero
$$dT(0)\varphi=D_g\varphi-I(\psi_{\lambda})P^{-}(|u_{\lambda}|^{2}\varphi)-\langle dI(\psi_{\lambda}),\varphi\rangle P^{-}(|u_{\lambda}|^{2}\psi_{\lambda}).$$
Now, the operator
$$\varphi\mapsto D\varphi-I(\psi_{\lambda})P^{-}(|u_{\lambda}|^{2}\varphi)$$
is negative definite on $H^{\frac{1}{2},-}_{+}$, hence it is invertible for all $\lambda>0$; moreover we have that
$$\langle dI(\psi_{\lambda}),\varphi\rangle P^{-}(|u_{\lambda}|^{2}\psi_{\lambda})\to 0, \quad \text{as}  \quad \lambda\to 0.$$
Then $dT(0)$ is invertible for $\lambda$ small enough. Since we have that also $A_{\lambda}\to 0$ as $\lambda \to 0$, by the implicit function theorem we get the existence of $h_{\lambda}\in H^{\frac{1}{2},-}_{+}$ such that $T(h_{\lambda})=A_{\lambda}$, moreover $h_{\lambda}\to 0$ as $\lambda\to 0$. Now, as before, we have that as $\lambda \to 0$,
$$\int_{M}\langle D_g(\psi_{\lambda}+h_{\lambda}),(\psi_{\lambda}+h_{\lambda}\rangle\ dv_{g}=\int_{\R^3_+}\langle D_{g_{\R^3}}\Psi_{0},\Psi_{0}\rangle\ dv_{g_{\R^{3}}}+o(1),$$
and
$$\int_{M}|u_{\lambda}|^{2}|\psi_{\lambda}+h_{\lambda}|^{2}\ dv_{g}=\int_{\R^3_+}|U_{0}|^{2}|\Psi_{0}|^{2}\ dv_{g_{\R^{3}}}+o(1).$$
Hence, we have
$$\tilde{E}(u_{\lambda},\psi_{\lambda}+h_{\lambda})=Y(\Sph^3_+,\partial\Sph^3_+,[g_0])\lambda^{+}_{\text{CHI}}(\Sph^3_+,\partial\Sph^3_+,[g_0])+o(1),$$
therefore
$$\tilde{Y} (M, \partial M, [g])\leq \tilde{Y}(\Sph^3_+,\partial\Sph^3_+,[g_0]) ,$$
and this concludes the proof.
\end{proof}

\noindent
We are now in a position to prove our existence result.
\begin{proof}[Proof of Theorem \ref{Aubin type result}]
Let us consider the functional $E$: we want to show that $E$ has the geometry of mountain pass type, in order to apply a min-max argument. Technically, one cannot apply the classical mountain-pass theorem due to the nature of the functional and the restrictions that one has, though we will provide a heuristic reason here on why one should expect the value $\tilde{Y}(M,\partial M, [g])$ to be a critical value for $E$. So, for $t>0$ and $s>0$, we consider the functional $W(t,s,\varphi):=E(tu, s\psi+\varphi)$, where $\varphi \in H^{\frac{1}{2},-}_{+}$. This parametrization is consistent with the generalized Nehari manifold that will be introduced later. Notice that equivalently one could still define $W$ as $W(t,s,\varphi)= E(tu, s\psi^{+}+\varphi)$. Hence, one has
\begin{align}
W(t,s,\varphi)=&\frac{1}{2}\Big(t^{2}\|u\|^{2}+s^{2}\|\psi^{+}\|^{2}-t^{2}s^{2}\int_{M}u^{2}|\psi^{+}|^{2}\ dv_{g}-\|\varphi\|^{2}-t^{2}\int_{M}u^{2}|\varphi|^{2}\ dv_{g}\notag\\
&-2t^{2}s\int_{M}u^{2}\langle \psi^{+},\varphi\rangle \ dv_{g}\Big)\notag\\
&=\frac{1}{2}\Big(t^{2}\|u\|^{2}+s^{2}\|\psi^{+}\|^{2}-t^{2}s^{2}\int_{M}u^{2}|\psi^{+}|^{2}\ dv_{g}+Q_{t,s}(\varphi)\Big),
\end{align}
where $$Q_{t,s}(\varphi)=-\|\varphi\|^{2}-t^{2}\int_{M}u^{2}|\varphi|^{2}\ dv_{g}-2t^{2}s\int_{M}u^{2}\langle \psi^{+},\varphi\rangle \ dv_{g}.$$
Notice that $Q_{t,s}(\cdot)$ is negative definite and strongly concave. Hence, it has a unique maximizer $\tilde{\varphi}:=\tilde{\varphi}(t,s,u,\psi)$ and this maximizer satisfies:
$$D_{g}\tilde{\varphi}-P^{-}(u^{2}\tilde{\varphi}-st^{2}u^{2}\psi^{+})=0.$$
Notice that this is equivalent to $$P^{-}(D_{g}(s\psi^{+}+\varphi)-(tu)^{2}(s\psi^{+}+\varphi))=0,$$
which is in some sense, the constraint that we have in the definition of $\tilde{Y}(M,\partial M, [g])$. Now, if we substitute in $W$, we have
$$\tilde{W}(t,s)=W(t,s,\tilde{\varphi})=\frac{1}{2}\Big(t^{2}\|u\|^{2}+s^{2}\|\psi^{+}\|^{2}-t^{2}s^{2}\int_{M}u^{2}|\psi^{+}|^{2}\ dv_{g}\Big).$$
Thus, one can easily see that there exists $C_{1}$ and $C_{2}$ positive and depending on $u$ and $\psi^{+}$, such that
$$ \tilde{W}(t,s)\geq C_{1}(t^{2}+s^{2})-C_{2}(t^{4}+s^{4}).$$
Therefore, for $s$ and $t$ small enough we have $\tilde{W}(t,s)>c>0$. On the other hand $$\tilde{W}(t,t)\leq t^{2}(\|u\|^{2}+\|\psi^{+}\|^{2})-t^{4}\int_{M}u^{2}|\psi^{+}|^{2}\ dv_{g}.$$
Thus, as long as $\int_{M}u^{2}|\psi^{+}|^{2}\not=0$, we have $\tilde{W}(t,t)\to -\infty$ as $t\to +\infty$. With this we see that we have a sort of a mountain pass geometry.

Next we consider the following min-max problem
$$m=\inf\left\{\begin{array}{ll}
\displaystyle\max_{t\geq 0, s\geq 0, \varphi \in H^{\frac{1}{2},-}_{+}}E(tu,s\psi+\varphi); \text{ where  $(u,\psi)\in H^{1}(M)\setminus\{0\}\times H^{\frac{1}{2}}_+(\Sigma M)\setminus \{0\}$ s.t. }\\
\\
I(\psi)>0 \end{array} \right\},$$
which is equivalent to
$$m=\inf\left\{\begin{array}{ll}
\displaystyle\max_{t\geq 0, s\geq 0}E(tu,s\psi); \text{ where  $(u,\psi)\in H^{1}(M)\setminus\{0\}\times H^{\frac{1}{2}}_+(\Sigma M)\setminus \{0\}$ s.t. }\\
\\
I(\psi)>0; \quad P^{-}\left(D_g \psi-(tu)^{2}\psi\right)=0 \end{array} \right\},$$
Without the orthogonality condition, in the classical case, if $E$ satisfies (PS) at the level $m$ then $m$ would be a critical value; in particular, by a direct computation we have that
$$\max_{t> 0, s> 0}E(tu,s\psi)=\frac{1}{2}\tilde{E}(u,\psi),$$
therefore $2m=\tilde{Y} (M, \partial M, [g])$. This provides the heuristic proof on why one would expect $m$ to be a critical value.\\
Next we explicitly show that indeed we have a critical point at that level by introducing the generalized Nehari manifold:
$$N=\left\{\begin{array}{ll}(u,\psi)\in H^{1}(M)\times H^{\frac{1}{2}}_+(\Sigma M); s.t. \\
\\
\displaystyle\int_{M}uL_{g}u\ dv_{g}=\int_{M}\langle D_g\psi,\psi\rangle\ dv_{g}=\int_{M}|u|^{2}|\psi|^{2}\ dv_{g} \not=0;\\
\\
P^{-}\left(D_g \psi-I(\psi)u^{2}\psi\right)=0
\end{array}\right\} $$
We first show that $N$ is indeed a manifold, so we consider the operator
$$G:H^{1}(M)\times H^{\frac{1}{2}}_+(\Sigma M)\to \R\times \R \times H^{\frac{1}{2},-}_+(\Sigma M),$$
defined by
$$G(u,\psi)=\left[\int_{M}uL_{g}u-|u|^{2}|\psi|^{2}\ dv_{g},\int_{M}\langle D_g\psi-|u|^{2}\psi,\psi\rangle\ dv_{g},P^{-}(D_g\psi-I(\psi)|u|^{2}\psi)\right].$$
In this way, since $N=G^{-1}(0)$, if $DG(u,\psi)$ is onto for all $(u,\psi) \in N$ then $N$ is a manifold. Let $(u_{0},\psi_{0})\in N$, we will show that $DG(u_{0},\psi_{0})$ is invertible if restricted to some special subspace. For $h\in H^{\frac{1}{2},-}_{+}$, we will use the following representation
$$(tu_{0},s\psi_{0}+h)=[t,s,h] \in \R\times \R\times H^{\frac{1}{2},-}_{+},$$
and we will express $DG(u_{0},\psi_{0})$ in this basis. Since $\int_{M}|u_{0}|^{2}|\psi_{0}|^{2}\ dv_{g}\not=0$, we can assume for the sake of simplicity that $\int_{M}|u_{0}|^{2}|\psi_{0}|^{2}dv_{g}=1$. We have then
\begin{align}
DG(u_{0},\psi_{0})[1,0,0]=&[0,-2,2P^{-}(D_g\psi_{0})],\notag\\
DG(u_{0},\psi_{0})[0,1,0]=&[-2,0,0],\notag \\
DG(u_{0},\psi_{0})[0,0,h]=&[2\langle D_g\psi_{0}, h \rangle,0,D_gh-P^{-}(|u_{0}|^{2}h)]\notag.
\end{align}
Now we define the operator $K(h)= D_gh-|u_{0}|^{2}h$, and we see that it is negative definite on $H^{\frac{1}{2},-}$: in fact
$$\langle Kh,h\rangle=-\|h\|^{2}-\int_{M}|u_{0}|^{2}h^{2}dv_{g};$$
hence it is invertible. Now, for $[a,b,c]\in \R\times \R \times H^{\frac{1}{2},-}_{+}$, we want to find $[x_{1},x_{2},w]$ so that
$DG(u_{0},\psi_{0})[x_{1},x_{2},w]=[a,b,c]$, namely we have to solve the following system:
$$\displaystyle\left\{\begin{array}{ll}
a=-2x_{2}+2\langle D_g\psi_{0},w\rangle \\
b=-2x_{1}\\
c=2x_{1}P^{-}(D_g\psi_{0})+K(w)
\end{array}\right. $$
Since $K$ is invertible, we find
$$\left\{\begin{array}{ll}
\displaystyle x_{1}=-\frac{b}{2} \\
\\
\displaystyle x_{2}=-\frac{a}{2}+\langle D_g\psi_{0},K^{-1}(c+bP^{-}(D_g\psi_{0}))\rangle\\
\\
\displaystyle w=K^{-1}(c+bP^{-}(D_g\psi_{0}))
\end{array}\right. $$
Therefore $DG(u_{0},\psi_{0})$ is onto and hence $N$ is a manifold. Now, let us denote by $A(u_{0},\psi_{0})$ the inverse of $DG(u_{0},\psi_{0})_{|\R u_{0} \oplus \R \psi_{0}\oplus H^{\frac{1}{2},-}_{+}}$,
$$A(u_{0},\psi_{0}):\R\times \R \times H^{\frac{1}{2},-}_{+}\to \R u_{0} \oplus \R \psi_{0}\oplus H^{\frac{1}{2},-}_{+},$$
with
$$\|A(u_{0},\psi_{0})\|_{op}\leq C(\|u_{0}\|\|\psi_{0}\|).$$
As in the proof of Proposition \ref{tildelambda}, by using Ekeland's principle, we have the existence of a minimizing (PS) sequence $(u_{n},\psi_{n})\in N$, for $E$ restricted to $N$: we will show that this is indeed a (PS) sequence for $E$ also in $H^{1}(M)\times H^{\frac{1}{2}}_+(\Sigma M)$. We set $DE(u_{n},\psi_{n})=\varepsilon_{n}$ and we have that $\varepsilon_{n}^{T}\to 0$, since it is the tangential part of the (PS) sequence which is a (PS) sequence in $N$. Now we define
$$P_{n}=A(u_{n},\psi_{n})\circ DG(u_{n},\psi_{n})$$
and we notice that it is a projector on $\R u_{0} \oplus \R \psi_{0}\oplus H^{\frac{1}{2},-}_{+}$ parallel to $T_{(u_{0},\psi_{0})}N$. Moreover, since
$$E(u_{n}\psi_{n})=\frac{1}{2}\int_{M}|u_{n}|^{2}|\psi_{n}|^{2}\ dv_{g}\to m,$$
we have that $\|u_{n}\|^{2}\leq C$. Also
$$-\|\psi^{-}_{n}\|^{2}=\int_{M}|u_{n}|^{2}\langle \psi_{n},\psi_{n}^{-}\rangle\ dv_{g}.$$
Therefore
\begin{align}
\|\psi^{-}_{n}\|^{2}&\leq \int_{M}|u_{n}|^{2}|\psi_{n}||\psi_{n}^{-}| dv_{g}\notag\\
&\leq \left(\int_{M}|u_{n}|^{2}|\psi_{n}|^{2}\ dv_{g}\right)^{\frac{1}{2}}\left(\int_{M}|u_{n}|^{2}|\psi_{n}^{-}|^{2}\ dv_{g}\right)^{\frac{1}{2}}\notag\\
&\leq C_{1}\|u_{n}\|_{L^6}\|\psi_{n}^{-}\|_{L^3},\notag
\end{align}
so that
$$\|\psi^{-}_{n}\|\leq C.$$
But we have that
$$\|\psi_{n}^{+}\|^{2}-\|\psi_{n}^{-}\|^{2}=\int_{M}|u_{n}|^{2}|\psi_{n}|^{2}dv_{g},$$
hence
$$\|\psi_{n}^{+}\|^{2}\leq C.$$
Therefore, we have that $P_{n}$ is uniformly bounded. Let now $P_{n}^{*}$ be the adjoint of $P_{n}$; so that $P_{n}^{*}$ is also a projector on $(\R u_{0} \oplus \R \psi_{0}\oplus H^{\frac{1}{2},-}_{+})^{\perp}$ parallel to $\mathcal{N}_{(u_{n},\psi_{n})}N$, the normal space of $N$ at the point $(u_{n},\psi_{n})$. Since
$$\varepsilon_{n}\in (\R u_{0} \oplus \R \psi_{0}\oplus H^{\frac{1}{2},-})^{\perp},$$
hence $\varepsilon_{n}=(Id-P_{n}^{*})\varepsilon_{n}^{T}$ and so $(u_{n},\psi_{n})$ is indeed a (PS) sequence for $E$ in $H^{1}(M)\times H^{\frac{1}{2}}_+(\Sigma M)$; in particular this (PS) sequence is at the energy level $\frac{1}{2}\tilde{Y} (M, \partial M, [g])$. Finally, from the classification of the (PS) sequence Theorem (\ref{first}), if $\tilde{Y} (M, \partial M, [g])<\tilde{Y}(\Sph^3_+,\partial\Sph^3_+,[g_0])$, the (PS) sequence converges to a  solution to our problem and the proof is concluded.
\end{proof}


\end{document}